\title[Generic properties of $3$-dimensional Reeb flows]{Generic properties of $3$-dimensional Reeb flows: Birkhoff sections and entropy}
\author[V. Colin]{Vincent Colin}
\address{V. Colin, Nantes Universit\'e, CNRS, Laboratoire de Math\'ematiques Jean Leray, LMJL, F-44000 Nantes, France}
\email{vincent.colin@univ-nantes.fr}
\urladdr{https://www.math.sciences.univ-nantes.fr/~vcolin/}
\author[P. Dehornoy]{Pierre Dehornoy}
\address{P. Dehornoy, Aix Marseille Universit\'e, CNRS, I2M, Marseille, France}
\email{pierre.dehornoy@univ-amu.fr}
\urladdr{http://www-fourier.ujf-grenoble.fr/~dehornop/}
\author[U. Hryniewicz]{Umberto Hryniewicz}
\address{U. Hryniewicz, Chair of Geometry and Analysis, RWTH Aachen,
Jakobstrasse 2,
52064 Aachen, Germany}
\email{hryniewicz@mathga.rwth-aachen.de}
\urladdr{https://www.mathga.rwth-aachen.de/~hryniewicz/home/}
\author[A. Rechtman]{Ana Rechtman}
\address{A. Rechtman, Institut Fourier,
Universit\'e Grenoble Alpes,
100 rue des mathématiques 38610 Gières, France}
\email{ana.rechtman@univ-grenoble-alpes.fr}
\urladdr{https://www-fourier.ujf-grenoble.fr/~rechtmaa/}
\date{\today}
\newcommand{\C}{\mathbb{C}}
\newcommand{\R}{\mathbb{R}}
\newcommand{\Z}{\mathbb{Z}}
\newcommand{\N}{\mathbb{N}}
\newcommand{\D}{\mathbb{D}}
\newcommand{\Q}{\mathbb{Q}}
\newcommand{\supp}{{\rm supp}}
\newcommand{\s}{\mathscr{S}}
\newcommand{\Cc}{\mathscr{C}}
\renewcommand{\P}{\mathscr{P}}
\newcommand{\pr}{{\rm pr}}
\newcommand{\F}{\mathcal{F}}
\newcommand{\Leb}{{\rm Leb}}
\newcommand{\E}{\mathcal{E}}
\renewcommand{\span}{{\rm span}}
\newcommand{\vol}{{\rm vol}}
\newcommand{\lk}{{\rm Lk}}
\theoremstyle{plain}
\newtheorem{theorem}{\sc Theorem}[section]
\newtheorem{proposition}[theorem]{\sc Proposition}
\newtheorem{lemma}[theorem]{\sc Lemma}
\newtheorem{corollary}[theorem]{\sc Corollary}
\theoremstyle{definition}
\newtheorem{definition}[theorem]{\sc Definition}
\theoremstyle{remark}
\newtheorem{remark}[theorem]{\sc Remark}
\begin{document}

\maketitle

\begin{abstract}
In this paper we use broken book decompositions to study Reeb flows on closed $3$-manifolds.
We show that if the Liouville measure of a nondegenerate contact form can be approximated by periodic orbits, then there is a Birkhoff section for the associated Reeb flow.
In view of Irie's equidistribution theorem, this is shown to imply that the set of contact forms whose Reeb flows have a Birkhoff section contains an open and dense set in the $C^\infty$-topology.
We also show that the set of contact forms whose Reeb flows have positive topological entropy is open and dense in the $C^\infty$-topology.
\end{abstract}

\tableofcontents

\section{Introduction and results}

The notion of a Birkhoff section  (see Definition~\ref{defn_BS_GSS})  is a classical tool to study flows on $3$-manifolds,  going back to Poincar\'e. 
When a flow admits such a section, its dynamics can be reduced to the dynamics of the first-return map on the section -- a much simpler data.
Birkhoff sections are particularly convenient for studying topological and dynamical properties of the flow.
In contact topology, work of Giroux~\cite{giroux} implies that every contact structure on a closed $3$-manifold admits a supporting open book decomposition. 
In particular,  every contact structure is defined by a contact form whose Reeb vector field  has  a global surface of section, i.e. an embedded Birkhoff section. 
However  for a given contact form --~hence a given Reeb vector field~-- deciding if Birkhoff sections exist is difficult.

Not every flow on a closed $3$-manifold admits Birkhoff sections. 
For example a flow which is not a suspension and has no periodic orbits does not have one. 
Notice however that such a flow cannot be Reeb in view of the result of Taubes~\cite{taubes} asserting that every Reeb vector field on a closed 3-manifold has a periodic orbit.
On the other hand, many Reeb flows do admit Birkhoff sections.
A large class of examples is provided by a striking result due to Hofer, Wysocki and Zehnder~\cite{convex} which contains, as a special case, the fact that Hamiltonian flows on $3$-dimensional strictly convex compact energy levels possess   disk-like global surfaces of section.   
As a consequence, Hofer, Wysocki and Zehnder proved that on such an energy level there are two or infinitely many periodic orbits.
Another class consists of geodesic flows on closed, orientable and connected Riemannian $2$-manifolds such that the curvature has a definite sign. 
When the curvature is everywhere negative, this is implied by a result of Fried~\cite{friedanosov} asserting that transitive Anosov flows have Birkhoff sections.
When the curvature is everywhere positive this is covered by a classical result of Birkhoff~\cite{birkhoff}.

Consider a nonsingular vector field $X$ on a compact $3$-manifold $M$  tangent to~$\partial M$. 
If $\gamma:\R/T\Z \to M$ is a periodic orbit of period $T>0$, parametrized by the flow, then  $\frac{\gamma_*\Leb}{T}$ is an invariant Borel probability measure,  where $\Leb$ denotes the Lebesgue measure on $\R/T\Z$.
This measure is independent of the choice of period.
The set of invariant Borel probability measures is a convex subset of the topological dual of the space of continuous real-valued functions on~$M$ equipped with the supremum norm.
A Borel probability measure is said to be approximated by periodic orbits if it is the weak* limit of a sequence of finite convex combinations of measures induced by periodic orbits.

When $M$ is oriented, and $X$ is the Reeb vector field of a positive contact form~$\lambda$ with helicity $\vol(\lambda) = \int_M \lambda \wedge d\lambda>0$, then we say that the Liouville measure can be approximated by periodic orbits if $(\lambda \wedge d\lambda)/\vol(\lambda)$ can be approximated by periodic orbits as above.
A periodic orbit is nondegenerate if its transverse linearized Poincar\'e map has no root of unity as an eigenvalue.
The contact form is nondegenerate if every periodic Reeb orbit is nondegenerate.
Birkhoff sections and $\partial$-strong Birkhoff sections are defined in Section~\ref{sec_BBDs}; $\partial$-strong Birkhoff sections are, in particular, also Birkhoff sections.
Our main result reads as follows.

\begin{theorem}
\label{thm:BirkhoffSection}
If the Liouville measure of a nondegenerate contact form on a closed $3$-manifold can be approximated by periodic Reeb orbits, then the Reeb flow admits a $\partial$-strong Birkhoff section.
\end{theorem}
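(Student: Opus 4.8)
The plan is to produce the section from a \emph{supporting broken book decomposition}, whose existence for nondegenerate Reeb flows on closed oriented $3$-manifolds is the structural input we borrow from Colin--Dehornoy--Rechtman. Fix such a decomposition $(K,\F)$ for the Reeb vector field $X$: here $K$ is a finite union of periodic orbits (the binding), and $\F$ is a singular foliation of $M\setminus K$ by surfaces positively transverse to $X$, made of finitely many compact \emph{rigid pages} (already surfaces with boundary on $K$) together with $1$-parameter families of \emph{broken pages} sweeping out the remaining mapping-torus-like regions. If $(K,\F)$ has no broken binding orbit it is an honest open book with pages transverse to $X$, so a page is a global surface of section, in particular a $\partial$-strong Birkhoff section, and we are done; from here on we assume broken bindings are present (which incidentally forces the flow to have infinitely many periodic orbits), which is the only case where the hypothesis will be used.

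The sole obstruction to reading off a Birkhoff section from $(K,\F)$ is local around the broken binding orbits: near such an orbit $\gamma$ the leaves of $\F$ accumulate in two half-open-book sectors meeting along radial arcs, so the closure of a broken page carries boundary arcs running radially into $\gamma$ instead of spiralling into it, and the monodromies on the two sides of $\gamma$ need not match. The strategy is to \emph{resolve} each broken binding: in the broken-page region adjacent to $\gamma$, take one broken page together with the adjacent rigid pages, and modify it by a cut-and-paste supported near $\gamma$ and near a carefully chosen periodic orbit $\beta$ in the interior of that region (hence off $K$), removing the two radial arcs and trading them for boundary components on $\gamma$ and on $\beta$ that are now approached in the admissible spiralling manner. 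Performing this at every broken binding and re-gluing to the remaining rigid pages --- smoothing corners along the gluing loci and arranging the pieces to have disjoint interiors --- produces a compact surface $S$ with $\partial S$ a finite link of periodic orbits (the components of $K$ and the finitely many auxiliary orbits $\beta$) and with $S\setminus\partial S$ positively transverse to $X$; that every orbit meets $\mathrm{int}\,S$ in bounded forward and backward time follows from the transversality of the pages of $(K,\F)$ to the flow, so $S$ is a genuine Birkhoff section, and the local models at $\gamma$ and $\beta$ are chosen to yield the $\partial$-strong refinement of Section~\ref{sec_BBDs}.

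The hypothesis enters in the choice of the orbits $\beta$. Each $\beta$ must not only lie in the prescribed region but must have linking number with $\gamma$, and intersection numbers with the rigid pages, realizing a prescribed combinatorial pattern, so that the model pieces close up into $S$ with coherent orientations. Since $(\lambda\wedge d\lambda)/\vol(\lambda)$ is a weak* limit of convex combinations of periodic-orbit measures, and $K$ together with the finitely many rigid pages is Lebesgue-null, one finds periodic orbits whose normalized invariant measures are arbitrarily close to the Liouville measure; such an orbit enters every broken-page region, and a quantitative equidistribution estimate forces its linking and intersection numbers to grow until, for a suitable choice, the required pattern is attained at all broken bindings at once.

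I expect the technical core, and the main obstacle, to be this last coupling: turning ``the periodic-orbit measures weak*-converge to the Liouville measure'' into the \emph{quantitative} linking and intersection data needed to resolve all broken bindings simultaneously, while keeping $S$ embedded in its interior and $\partial$-strong at the new boundary. Mere topological density of periodic orbits would not obviously suffice --- one needs that an orbit carrying almost all of the Liouville mass must cross every rigid page and wind around every broken binding a controlled, large number of times. A secondary difficulty is the bookkeeping of the local models at the broken bindings, so that the two half-open-books on either side of $\gamma$, after the surgery along $\beta$, glue to a connected \emph{oriented} surface and not merely to an immersed cycle; controlling orientations and the signs with which boundary components are traversed is what makes the $\partial$-strong conclusion, rather than just a Birkhoff section, the delicate part.
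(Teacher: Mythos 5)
Your outline correctly identifies the starting point (the broken book decomposition of~\cite{CDR}, and that the case $K_b=\emptyset$ is immediate), but the core step --- the ``resolution'' of a broken binding orbit $\gamma$ by a cut-and-paste near $\gamma$ and near an auxiliary periodic orbit $\beta$ --- has a genuine gap. By Definition~\ref{defn_BB}~(II.b), the rotation number of the linearized flow along $\gamma$ relative to the rigid pages is zero: the flow does not wind around $\gamma$ with respect to the existing foliation, and no local modification of the pages supported near $\gamma$ and near an interior orbit $\beta$ can change this. What is needed is to \emph{add} a surface that crosses $\gamma$ transversely in its interior (or whose boundary winds nontrivially around $\gamma$), as in Fried's pair-of-pants construction; an orbit $\beta$ with large linking number is not such a surface, and producing one by this geometric route is exactly what requires homoclinic connections (the path of~\cite{CDR} and~\cite{CM}, where the Kupka--Smale hypothesis --- not available under the hypotheses of Theorem~\ref{thm:BirkhoffSection} --- is used). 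The paper deliberately avoids this: it only produces the \emph{Poincar\'e-dual data}, a link $K'\subset M\setminus K_b$ of periodic orbits and a class $y'\in H^1(M\setminus K';\R)$ with $\left<y',\gamma\right>>0$ for all $\gamma\subset K_b$ (Proposition~\ref{prop_gen_1}, via the Action-Linking Lemma), and then feeds the class $ay_0''+y''$ into the Schwartzman--Fried--Sullivan criterion (Theorem~\ref{thm_existence_from_SFS}); the Birkhoff section is obtained abstractly as a fiber of a circle-valued map on the blown-up manifold, not as a surgered union of pages. Note also that your claim that bounded return times ``follow from the transversality of the pages'' is false near $K_b$: orbits close to $W^s(\gamma)\cup W^u(\gamma)$ can avoid the rigid pages for arbitrarily long times, which is why Lemmas~\ref{lemma_times_hit_rigid} and~\ref{lemma_crucial_ergodic} require real work with invariant measures.

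A secondary gap is the passage from weak* approximation of the Liouville measure to the ``prescribed linking and intersection pattern.'' In a general closed $3$-manifold the linking number of $\beta$ with $\gamma$ is not even defined (the orbits need not be null-homologous), and positivity of intersections with rigid pages for an equidistributed orbit is not automatic orbit by orbit. The paper handles this with the homological argument of Section~\ref{sec_general_scheme}: rational Seifert surfaces for cycles in $H\cap B_1(M)$, the Poincar\'e duals $\Theta_n$ of the weighted links, a finite-dimensionality argument in $H_2(M,L;\R)$, and Hahn--Banach to extract $y'$. Your quantitative-equidistribution sketch works essentially only in the homology-sphere case (as the introduction of the paper points out), so as written the proposal does not establish the theorem in general.
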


A result due to Irie~\cite{irie} asserts that the set of nondegenerate contact forms on a closed $3$-manifold whose Liouville measures can be approximated by periodic orbits is residual, in particular dense, with respect to the $C^\infty$-topology.
Together with Proposition~\ref{prop: open} we obtain the following statement.

\begin{corollary}
\label{cor_generic_existence}
On any closed $3$-manifold, the set of contact forms such that the Reeb flow admits a $\partial$-strong Birkhoff section spanned by nondegenerate periodic orbits is open and dense in the $C^\infty$-topology.
\end{corollary}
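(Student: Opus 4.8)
The plan is to deduce Corollary~\ref{cor_generic_existence} by combining Theorem~\ref{thm:BirkhoffSection}, Irie's equidistribution result, and the openness statement of Proposition~\ref{prop: open}. First I would establish density. Fix a closed oriented $3$-manifold $M$ and an arbitrary contact form $\lambda$; after a $C^\infty$-small perturbation one may assume $\lambda$ is nondegenerate, since nondegenerate contact forms are $C^\infty$-dense (this is standard, and in any case is subsumed by Irie's theorem). By Irie's theorem~\cite{irie}, the set of nondegenerate contact forms whose Liouville measure is approximated by periodic Reeb orbits is residual, hence dense, in the $C^\infty$-topology; so there is a $C^\infty$-small perturbation $\lambda'$ of $\lambda$ lying in this set. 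Applying Theorem~\ref{thm:BirkhoffSection} to $\lambda'$ yields a $\partial$-strong Birkhoff section for the Reeb flow of $\lambda'$. It remains to check that the boundary orbits spanning this section are nondegenerate: but every periodic Reeb orbit of $\lambda'$ is nondegenerate by hypothesis, so in particular the binding orbits are. This proves that the set in question is $C^\infty$-dense.

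Next I would establish openness, and here the work is already done: Proposition~\ref{prop: open} asserts precisely that the set of contact forms whose Reeb flow admits a $\partial$-strong Birkhoff section spanned by nondegenerate periodic orbits is open in the $C^\infty$-topology. The intuition behind such a statement — which I would only need to invoke, not reprove — is that a nondegenerate periodic orbit persists under small perturbations of the contact form (by the implicit function theorem applied to the fixed-point equation for the Poincar\'e return map), that the binding collection of such orbits therefore deforms smoothly, and that the transversality of the Reeb flow to the interior of the section, together with the prescribed behavior near the binding, is an open condition; so a $\partial$-strong Birkhoff section for $\lambda'$ can be isotoped to one for every contact form $C^\infty$-close to $\lambda'$. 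Granting Proposition~\ref{prop: open}, openness is immediate.

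Finally, the conclusion follows formally: the set of contact forms whose Reeb flow admits a $\partial$-strong Birkhoff section spanned by nondegenerate periodic orbits is open by Proposition~\ref{prop: open} and dense by the previous paragraph, hence open and dense. I do not expect a genuine obstacle here, since all the substantive content is packaged in Theorem~\ref{thm:BirkhoffSection}, Proposition~\ref{prop: open}, and Irie's theorem; the only point requiring a line of care is the bookkeeping that the binding orbits produced by Theorem~\ref{thm:BirkhoffSection} inherit nondegeneracy from the ambient contact form, which is what lets the output of the density step land inside the open set supplied by Proposition~\ref{prop: open}.
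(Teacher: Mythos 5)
Your proposal is correct and follows essentially the same route as the paper: density comes from Irie's equidistribution theorem combined with Theorem~\ref{thm:BirkhoffSection} (with the binding orbits inheriting nondegeneracy from the nondegenerate form), and openness comes from Proposition~\ref{prop: open}. The only point you state slightly more strongly than the paper does is the openness step: Proposition~\ref{prop: open} is phrased for a $C^1$-neighborhood of the Reeb vector field (with persistence of the nondegenerate binding orbits via the implicit function theorem), and one passes to $C^\infty$-openness for contact forms because $C^2$-closeness of forms gives $C^1$-closeness of Reeb fields; this is exactly the bookkeeping the paper intends.
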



The main key ingredient of the proof of Theorem~\ref{thm:BirkhoffSection} is the existence result found in~\cite{CDR} for \textit{broken book decompositions} carrying the Reeb vector field of a nondegenerate contact form on any closed $3$-manifold.
These are special kinds of foliations  on  the complement of a finite set of periodic orbits, the so-called binding orbits, and  whose  leaves are transverse to the Reeb vector field. 
One of the main applications of broken book decompositions obtained in~\cite{CDR} is the statement that every nondegenerate Reeb flow on a closed $3$-manifold has either two or infinitely many periodic orbits, generalizing a previous result by Cristofaro-Gardiner, Hutchings and Pomerleano~\cite{CGHP} for cases where the first Chern class of the contact structure is torsion.
All these results rely on the machinery of Embedded Contact Homology as defined by Hutchings~\cite{hutchings}.

\begin{remark}
A result of Sigmund~\cite{sigmund} implies that the Liouville measure of an Anosov Reeb flow can be approximated by periodic orbits. 
This fact and Theorem~\ref{thm:BirkhoffSection} together prove that Anosov Reeb flows have Birkhoff sections. 
This is a special case of Fried's results from~\cite{friedanosov}.
\end{remark}

While writing this article, we learned that similar results were independently obtained by Contreras and Mazzucchelli~\cite{CM}.

\begin{theorem}[Contreras and Mazzucchelli~\cite{CM}]
\label{thm_CM}
The Reeb flow of a nondegenerate contact form on a closed $3$-manifold admits a Birkhoff section provided that the stable and unstable manifolds of all hyperbolic periodic Reeb orbits intersect transversely.
\end{theorem}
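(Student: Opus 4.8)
Since the hypothesis of this theorem is vacuous as soon as there are no hyperbolic periodic Reeb orbits --~for instance for a Reeb flow on $S^3$ close to an irrational ellipsoid, which has only two, elliptic, periodic orbits and hence a Liouville measure that is manifestly not approximated by periodic orbits, yet does admit a Birkhoff section~-- this statement cannot be deduced from Theorem~\ref{thm:BirkhoffSection}, and one is led to argue directly from the geometry of the flow. The plan is to start from a broken book decomposition $(B,\mathcal{F})$ carrying the Reeb vector field $X$, as produced by~\cite{CDR}: here $B$ is a finite collection of periodic orbits, the binding orbits, and $\mathcal{F}$ is a foliation of $M\setminus B$ by pages transverse to $X$. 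Each binding orbit is either \emph{non-broken}, around which $\mathcal{F}$ looks like an ordinary rational open book (pages winding around the orbit like a paddle wheel), or \emph{broken}, in which case the orbit is hyperbolic and near it $\mathcal{F}$ displays hyperbolic sectors together with finitely many \emph{rigid pages} whose closures contain separatrix arcs of the orbit rather than a full neighbourhood of it. If every binding orbit is non-broken, a finite union of pages can be assembled into a Birkhoff section and we are done by~\cite{CDR}; the difficulty is concentrated at the broken binding orbits and their rigid pages, whose boundary along the orbit is only a spiralling separatrix arc and therefore not admissible for a Birkhoff section.

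The key point is then to use the transversality hypothesis to ``open up'' the broken binding orbits. The separatrices of a broken binding orbit $\gamma$ lie on the $2$-dimensional, flow-invariant manifolds $W^s(\gamma)$ and $W^u(\gamma)$, so under the hypothesis the heteroclinic loci $W^u(\gamma)\cap W^s(\gamma')$ between hyperbolic periodic orbits are $1$-dimensional, i.e. finite unions of isolated heteroclinic orbits (together with transverse homoclinic orbits, which merely create horseshoes) with no degenerate tangencies. I would follow each rigid page of $\gamma$ along the separatrix it is asymptotic to; controlling, via this transversality, the chain of heteroclinic orbits along which the separatrix travels, one can reroute the rigid page through thin flow-transverse bands shadowing those heteroclinic orbits until it closes up or reaches a region where $\mathcal{F}$ is already a genuine open book. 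Performing these reroutings simultaneously and compatibly for the finitely many rigid pages of the finitely many broken binding orbits --~compactness of $M$ and finiteness of $B$ being what forces the procedure to terminate~-- replaces the broken book by a genuine rational open book with only non-broken binding, whose pages assemble into a Birkhoff section. An equivalent repackaging is to verify a Fried-type homological criterion for cross-sections (in the spirit of~\cite{friedanosov}): exhibit a cohomology class that pairs positively with every invariant probability measure supported off the enlarged binding, using the broken book to get positivity away from the hyperbolic set and the transversality hypothesis to get it on the hyperbolic set.

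The principal obstacle is the gluing itself. Pieces of $W^s(\gamma)$ and $W^u(\gamma)$ are \emph{tangent} to $X$, so they cannot simply be grafted onto the pages; the interpolating bands must be built so as to stay transverse to $X$ while their ends fit the spiralling boundary arcs of the rigid pages arbitrarily close to $\gamma$. This is where nondegeneracy of the hyperbolic orbits and transversality of their invariant manifolds enter quantitatively, through a normal-form analysis near each hyperbolic orbit and a tubular description along each heteroclinic orbit. A second, combinatorial, difficulty is to order the reroutings so that resolving the rigid pages of one orbit does not re-create broken behaviour elsewhere --~this calls for a filtration of the broken binding orbits by the partial order induced by their heteroclinic connections, resolving innermost orbits first. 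Finally, having produced a flow-transverse surface with periodic boundary, one must still check that it is a genuine Birkhoff section, i.e. that every orbit meets it within uniformly bounded time; this should follow from the fact that every orbit already crosses the pages of $\mathcal{F}$ with bounded gaps away from $B$, together with control --~again via the transversality hypothesis and the local hyperbolic normal form~-- of the orbits that linger near the broken binding orbits.
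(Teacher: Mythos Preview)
This theorem is cited from Contreras--Mazzucchelli and is not proved in full in the paper, but the argument is sketched in Remark~\ref{remark: CM} and a closely related reduction is carried out in Appendix~\ref{section: C}. Measured against that sketch, your proposal has a genuine gap.

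Your plan is to follow the rigid pages along the separatrices of each broken binding orbit and ``reroute'' them through thin transverse bands shadowing heteroclinic chains, resolving orbits according to a heteroclinic partial order. You correctly flag the obstacle --- the invariant manifolds are tangent to the flow, so such bands cannot be grafted on --- but you do not resolve it, and in fact this direct page-surgery along separatrices is not how the argument goes. The key input from~\cite{CM}, emphasized in Remark~\ref{remark: CM}, is that under the transversality hypothesis the \emph{closures} of the stable and unstable manifolds of every broken binding orbit \emph{coincide}. This forces, for each broken binding orbit~$k$ and for the required pairs of stable/unstable branches, a transverse \emph{homoclinic} connection --- precisely the object you dismiss parenthetically as ``merely creating horseshoes''. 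One then exploits the horseshoe not to bend pages, but to produce nearby \emph{periodic} orbits $p',q',r'$ shadowing the homoclinics, and builds Fried's pair of pants: a surface transverse to the flow, bounded by $p',q',r'$, whose interior meets~$k$ (see Appendix~\ref{section: C} and Figure~\ref{figure:FriedPants}). The Fried sum of this pants with the rigid pages turns~$k$ from broken into radial (or removes it from the binding), and iterating over the finitely many broken orbits yields a rational open book. No filtration by heteroclinic chains is needed, since each broken orbit is handled by its own homoclinics.

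So two ideas are missing: the closure statement $\overline{W^s(k)}=\overline{W^u(k)}$ that manufactures homoclinics in the correct sectors, and the fact that the auxiliary transverse surface is bounded by \emph{periodic} orbits found near the homoclinic, then added combinatorially via Fried sum --- rather than by any analytic modification of pages along the (flow-tangent) separatrices. Your alternative ``Fried-type homological criterion'' is essentially the paper's Theorem~\ref{thm_existence_Birkhoff_sections}, but there again the entire content is producing a class that is positive along the broken binding, and that class is supplied exactly by the Fried pants (equivalently, the data $K',y'$).
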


A contact form whose Reeb flow satisfies the assumptions of the above result will be called here strongly nondegenerate,  and were said to satisfy the Kupka-Smale condition in~\cite{CM}. 
Their argument takes the path suggested in \cite{CDR}, see Remark~\ref{remark: CM}, which is different from the one followed in this article.
Both ways still start from a common crucial input: the existence result for broken book decompositions from~\cite{CDR}.
 In Appendix \ref{section: C} we  sketch  an argument using the strategy elaborated in \cite{CDR} and prior to the present work as well as to \cite{CM}, but which leads to a generic existence result only valid in $C^1$-topology, see Theorem \ref{thm: section}. It relies on the machinery developed by Arnaud, Bonatti and Crovisier \cite{ABC,BC} and involves the proof of the {\it Lift Axiom} (Lemma~\ref{lemma: lift Axiom}) for Reeb flows.
In turn, the Lift Axiom and the derived connecting lemma (Theorem~\ref{thm: connecting}) also imply Theorem \ref{thm: transitive}: the set of transitive Reeb vector fields is $C^1$-generic, in dimension $2n+1$ with $n\geq 1$.

Broken book decompositions generalize the finite-energy foliations obtained for nondegenerate Reeb flows on the tight 3-sphere by Hofer, Wysocki and Zehnder in their pioneering work~\cite{fols}. The analysis from~\cite{fols} led to major breakthroughs in the study of Reeb dynamics, as well as in the development of pseudo-holomorphic curve theory in symplectic cobordisms.
In particular, Hofer, Wysocki and Zehnder were able to prove that a strongly nondegenerate Reeb flow on the tight $3$-sphere has two or infinitely many periodic orbits, leading them to first conjecture that the $2/\infty$ dichotomy must hold for all Reeb flows on the tight $3$-sphere.

The literature is full of results with sufficient  existence  conditions for Birkhoff sections, going back a century.
Let us comment on the problem of finding global sections spanned by \textit{given} periodic orbits, with no genericity assumptions. 
The simplest example is Birkhoff's theorem~\cite{birkhoff}.
Closed geodesics on oriented surfaces determine immersed annuli on the unit tangent bundle, made of unit vectors along the closed geodesic pointing ``to one of the sides''.
This is called a Birkhoff annulus.
It is proved in~\cite{birkhoff} that Birkhoff annuli over embedded closed geodesics on positively curved spheres are global surfaces of section for the geodesic flow.
This result is interesting because it gives strong dynamical conclusions out of geometric assumptions that can be checked in concrete examples.
The analogue of Birkhoff's result in the context of convex energy levels is found in~\cite{H}, where the periodic orbits that span disk-like global surfaces of section are characterized. 
The convexity assumption was first dropped in~\cite{HS}, eventually leading to the generalization of Birkhoff's result to Reeb flows via pseudo-holomorphic curves in~\cite{HSW}, where it is shown that certain linking assumptions on invariant measures, well-known to be sufficient for global sections of general flows (\cite[Theorem~1.3]{Hry_SFS}), can sometimes be reduced to linking assumptions on periodic orbits.

It is also interesting to try to measure the minimal complexity that a Birkhoff section of a Reeb flow can have. 
For instance, Fried showed that geodesic flows on hyperbolic surface always admit torus-like Birkhoff sections~\cite{friedanosov} and this was recently extended to hyperbolic 2-dimensional orbifolds~\cite{D, DS}. 
On the other hand, van Koert constructed Reeb flows on $S^3$ without disk-like global surfaces of section~\cite{vK}. 

\medskip

Our second result concerns the topological entropy of Reeb flows in dimension three.
It is already known that a nondegenerate Reeb vector field with zero topological entropy admits a Birkhoff section~\cite{CDR}. 
In the absence of elliptic periodic orbits the techniques  of Koropecki, Le Calvez and Nassiri~\cite{KLN}, and of Le Calvez and Sambarino~\cite{LS}, that we adapt to diffeomorphisms of surfaces with boundary, imply that the topological entropy of the flow is positive. When the flow has an elliptic periodic orbit, we can perturb the flow in a neighborhood of this orbit to obtain the following statement.

\begin{theorem}\label{thm: entropy}
Let $M$ be a closed $3$-manifold and $\xi \subset TM$ be a co-orientable contact structure. The set of contact forms on $M$ defining $\xi$ such that the Reeb flow has positive topological entropy is open and dense in the set of all contact forms on $M$ defining $\xi$, with respect to the $C^\infty$-topology.
\end{theorem}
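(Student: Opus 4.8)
The plan is to prove the two assertions of Theorem~\ref{thm: entropy}, openness and density, in turn; the density part is the substantial one.

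\emph{Openness.} Positive topological entropy is a $C^\infty$-open condition, already among all $C^\infty$ flows on a closed $3$-manifold. Indeed, by the work of Katok a $C^\infty$ flow on a closed $3$-manifold with positive topological entropy carries a compact invariant hyperbolic set which is a suspended horseshoe; such a set persists, together with its dynamics up to conjugacy, under $C^1$-small perturbations of the flow, so every nearby flow again contains a suspended horseshoe and hence has positive topological entropy. Since $\lambda \mapsto R_\lambda$ is continuous from the $C^\infty$-topology on contact forms to the $C^1$-topology on vector fields, the set of contact forms defining $\xi$ whose Reeb flow has positive topological entropy is $C^\infty$-open.

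\emph{Density, reduction to an elliptic orbit.} Fix a contact form $\lambda_0$ defining $\xi$ and a $C^\infty$-neighborhood $\mathcal U$ of $\lambda_0$; we must find $\lambda \in \mathcal U$, still defining $\xi$, whose Reeb flow has positive topological entropy. Every $f\lambda_0$ with $f>0$ is again a contact form defining $\xi$, since $f\lambda_0 \wedge d(f\lambda_0) = f^2\,\lambda_0\wedge d\lambda_0$, and by a standard transversality argument the nondegenerate ones are $C^\infty$-dense among them. Choose a nondegenerate $\lambda_1 \in \mathcal U$ defining $\xi$. If its Reeb flow has positive topological entropy we are done, so assume it has zero topological entropy. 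By~\cite{CDR} the flow of $\lambda_1$ then admits a Birkhoff section $\Sigma$, and the adaptation — carried out in the body of the paper — of the results of Koropecki--Le Calvez--Nassiri~\cite{KLN} and Le Calvez--Sambarino~\cite{LS} to area-preserving diffeomorphisms of compact surfaces with boundary, applied to the first-return map on $\Sigma$, shows that a nondegenerate Reeb flow with no elliptic periodic orbit has positive topological entropy; hence the flow of $\lambda_1$ has an elliptic periodic orbit $\gamma$.

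\emph{Density, creating a horseshoe near $\gamma$.} Let $N$ be an arbitrarily small tubular neighborhood of $\gamma$ and $D \subset N$ a small embedded disk transverse to $\gamma$; the first-return map $\phi$ of $R_{\lambda_1}$ to $D$ fixes the point $p$ where $\gamma$ meets $D$, preserves $d\lambda_1|_D$, and has elliptic linearization at $p$. It is classical that an area-preserving surface diffeomorphism with an elliptic fixed point can be $C^\infty$-perturbed, by a perturbation supported in an arbitrarily small neighborhood of that point, to one possessing a hyperbolic periodic point with a transverse homoclinic orbit nearby --- e.g.\ via the Birkhoff normal form, Poincar\'e--Birkhoff periodic orbits in a non-degenerate twist region, and the genericity of transverse intersections of invariant manifolds in Birkhoff zones of instability. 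A standard plugging construction over the flow-tube of $D$ then realizes such a perturbation of $\phi$ as a perturbation $\lambda = f\lambda_1$, with $f>0$ equal to $1$ outside $N$ and $C^\infty$-close to $\lambda_1$, so that $\lambda \in \mathcal U$, $\lambda$ defines $\xi$, and the Reeb flow of $\lambda$ has a transverse homoclinic orbit --- hence, by the Smale--Birkhoff theorem, positive topological entropy. This proves density, and together with openness it proves the theorem.

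\emph{Main obstacle.} The technical heart is the construction of the last paragraph: the perturbation must at the same time be supported near $\gamma$, stay within the \emph{fixed} contact structure $\xi$ (so that only the reparametrizations $f\lambda_1$ with $f>0$ are available), and keep the first-return map area preserving with respect to the area form induced by the \emph{perturbed} contact form, which itself varies with $f$. Both the generic unfolding of the elliptic orbit in the area-preserving category and the realization of the resulting return-map perturbation by an admissible contact form have to be carried out with care; the remaining inputs --- Katok's theorem, persistence of horseshoes, the Smale--Birkhoff theorem, and $C^\infty$-density of nondegenerate contact forms in a fixed contact structure --- are standard.
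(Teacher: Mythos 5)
Your skeleton coincides with the paper's proof: openness via the Katok/Lima--Sarig characterization of positive entropy by transverse homoclinic connections, and density by splitting into the case without elliptic orbits (surface dynamics of the return map of a Birkhoff section, after \cite{KLN,LS}) and the case with an elliptic orbit (a local perturbation creating a transverse homoclinic inside the fixed contact structure). The openness part is fine. However, both halves of the density argument are left at the level of assertions exactly where the real work lies. First, in your reduction you apply the surface-dynamics step to a merely \emph{nondegenerate} $\lambda_1$ whose Birkhoff section comes from the zero-entropy result of \cite{CDR}. The adaptation of \cite{KLN,LS} (Proposition~\ref{prop_no_elliptic_homoclinic} in the paper) needs more than that: the return map must lie in Mather's class, i.e.\ every periodic point Mather sectorial or Moser stable and \emph{no periodic connections}, and the section must be $\partial$-strong so that the induced homeomorphism of the ideal completion has Mather sectorial boundary points. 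Nondegeneracy does not exclude saddle connections; the paper instead uses strong nondegeneracy (Kupka--Smale) together with a $\partial$-strong Birkhoff section, both available on a $C^\infty$-dense set by genericity and Corollary~\ref{cor_generic_existence}. This is repairable, but as written the hypotheses of the proposition you need are not satisfied.

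Second, the elliptic case. You call it ``classical'' that an area-preserving surface map with an elliptic fixed point admits an arbitrarily $C^\infty$-small, locally supported perturbation with a \emph{transverse} homoclinic point, citing Birkhoff normal form, Poincar\'e--Birkhoff and zones of instability. That route presupposes a nondegenerate twist (and an irrational rotation number) and by itself does not yield transversality of the invariant manifolds; the degenerate elliptic points are precisely the problematic ones. The paper's proof rests on Zehnder's theorem \cite{Zeh} (Corollary~1), stated for generating functions, which handles arbitrary elliptic fixed points and produces transverse homoclinic points by $C^r$-small perturbations of the generating function for every $r$; the generating-function formulation is then also what makes the realization step work, namely embedding the perturbed germ as the Poincar\'e return map of a contact form of class $C^{r+1}$ defining the same $\xi$, followed by a further smoothing to land in $\Lambda_+$ with smooth forms. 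You explicitly flag both the creation of the homoclinic in the area-preserving category and its realization by an admissible contact form as the ``main obstacle'' to be ``carried out with care,'' so the proposal is an accurate outline of the strategy but omits its two essential ingredients: Zehnder's generating-function theorem and the genericity package (strong nondegeneracy plus Corollary~\ref{cor_generic_existence}) that makes the no-elliptic-orbit case legitimate.
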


\medskip

This result can be seen as a generalization of the fact that the geodesic flow on a closed surface has positive topological entropy $C^\infty$-generically on the metric. For surfaces of negative Euler characteristic the geodesic flow always has positive topological entropy regardless of the Riemannian metric, as follows from the results by Dinaburg \cite[Section 4]{Dinaburg}. In the case of the torus, when the geodesic flow is nondegenerate (or equivalently, the Riemannian metric is {\it bumpy}) there is a collection of length minimizing non-intersecting closed geodesics and by Theorem XVI in \cite{Hedlund}, there is a heteroclinic cycle between them. If the intersections along this cycle are not transverse, the arguments of Donnay \cite{Donnay} allow to locally perturb the metric to make them transverse, thus obtaining a geodesic flow with positive topological entropy. The difficult case of the sphere is treated by Knieper and Weiss \cite{KW}. Our proof follows essentially the same path as in \cite{KW}:
 they used the pseudo-holomorphic curves of Hofer, Wysocki and Zehnder~\cite{fols} to produce Birkhoff sections, dealing with the extra constraint of perturbing the metric instead of the flow among its Reeb neighbors.

There are many results available in the literature providing conditions that force a Reeb flow to have positive topological entropy.
In~\cite{CDR} it was proved that if the $3$-manifold is not graphed --~for example if it is hyperbolic~-- then every nondegenerate Reeb vector field has positive topological entropy. 
Symplectic topological methods for studying positive topological entropy of Hamiltonian systems have now been vastly used, going back to Polterovich~\cite{polterovich}, Frauenfelder and Schlenk~\cite{FS} and others.
In~\cite{ACH} a condition on the fractional Dehn twist coefficients of a supporting open book decomposition is given to guarantee that every Reeb vector field, possibly degenerate, for the given supported contact structure has positive topological entropy.
Alves and Pirnapasov~\cite{AP} proved that any contact $3$-manifold admits transverse links that force positive topological entropy when realized as periodic Reeb orbits.
Alves and Meiwes obtain in~\cite{AM} contact structures in higher dimensional spheres such that the Reeb flows of all defining contact forms have positive topological entropy.

\medskip

We end this introduction with a rough outline of the proof of Theorem~\ref{thm:BirkhoffSection}.

Given a flow in a closed manifold~$M$ and given a class $y$ in $H^1(M)$, Schwartzman~\cite[Section 7]{schwartzman} gave a necessary and sufficient condition for the flow to admit a Birkhoff section with empty boundary dual to~$y$: for every invariant Borel probability measure~$\mu$ the intersection of $\mu$ with $y$, see Section~\ref{ssec_invt_measures_int_numbers} for a definition, must be positive. 
This criterion can be adapted to the existence of a Birkhoff section with prescribed boundary, via Schwartzman-Fried-Sullivan theory~\cite{fried,schwartzman,sullivan}. 
It was already known to specialists for some time but we provide an explicit statement as Theorem~\ref{thm_existence_from_SFS}: given a link $L$ made of periodic orbits and a class $y$ in $H^1(M\setminus L)$, there exists a $\partial$-strong Birkhoff section if every invariant measure supported in~$M\setminus L$ intersects $y$ positively and on every component of~$L$ the flow winds positively with respect to~$y$.
A refinement for global surfaces of section can be found in~\cite{Hry_SFS}.

It is shown in~\cite{CDR} that when a contact form is nondegenerate its Reeb vector field is supported by a broken book decomposition $(K,\F)$ in a special manner. 
Here $K$ is a link formed by periodic orbits, called binding orbits, and $\F$ is a special smooth foliation of $M\setminus K$, see Definition~\ref{defn_BB} for a precise description. 
The binding splits into special sublinks $K = K_r \cup K_b$, the components $K_b$ are the so-called broken binding orbits.
It follows from definitions that when $K_b = \emptyset$, the broken book is a rational open book decomposition whose pages are Birkhoff sections for the Reeb flow.
Hence we proceed assuming that $K_b \neq \emptyset$.
 
In this case the foliation~$\F$ contains a finite set of special leaves, called rigid leaves, see Section~\ref{ssec_rigid_leaves}, which almost meet the conditions of Theorem~\ref{thm_existence_from_SFS}: their union~$S_{R_\F}$ intersects all orbits of the flow. 
However it is not true that the flow winds positively with respect to~$S_{R_\F}$, see Figure~\ref{figure: brokencomponent1} right. 
In order to enforce this property, one would like to find a surface transverse to the flow, bounded by periodic orbits in $M\setminus K_b$, and that intersects all components of $K_b$  positively. 
Then one could ``add it'' to the surface $S_{R_\F}$, thus obtaining the desired Birkhoff section, using a process that can be traced back to Fried's work~\cite{friedanosov}, see also~\cite{CDR}.

We do not find directly such a transverse surface. 
However we remark that it is enough to find a surface bounded by periodic orbits and which intersects the broken binding orbits positively --- the point being that this surface need not be positively transverse to the flow in its interior as is required in Fried's process.
Indeed, denoting by $S_b$ this surface, we remark that the ``sum'' $nS_{R_\F}+S_b$ intersects positively all invariant measures when $n$ is large enough, and that the flow winds positively with respect to it along the broken binding orbits.

At the technical level, it is easier to find the Poincar\'e dual to this surface $S_b$ : 
we will find an additional link of periodic orbits $K' \subset M \setminus K_b$ and a cohomology class $y' \in H^1(M\setminus K';\R)$ which essentially plays the role of the Poincar\'e dual of~$S_b$.
Once this task is done, Theorem~\ref{thm:BirkhoffSection} is a direct consequence of Theorem~\ref{thm_existence_Birkhoff_sections}, stated in Section~\ref{sec_BBDs} and proved in Section~\ref{sec_getting_BSs}.
It uses the fine structural dynamical information revealed by the broken book decomposition in order to check the assumptions of Theorem~\ref{thm_existence_from_SFS}.

We are then left with the task of finding $K'$ and $y'$, which is the content of Proposition~\ref{prop_gen_1}.
For this we need to establish a small modification of the Action-Linking Lemma from~\cite{BSHS}.
This lemma relates the intersection number between a given surface and the Liouville measure (a topological data) to the contact area of the surface.
Hence, if the Liouville measure can be approximated by periodic orbits then the contact area is re-obtained as the limit of intersection numbers of weighted links of periodic orbits with the surface.
The link $K'$ is found among those links.

In the special case where $M$ is a homology $3$-sphere, we can make the argument explicit and short: denoting by $I_k$ a sequence of weighted links that approximate the Liouville measure, and by $h_1, \dots, h_n$ the broken binding orbits, the Action-Linking Lemma implies that the linking $\lk(h_i, \lambda\wedge d\lambda)$ equals the action $T(h_i)$, which is positive. 
Since $\lk(h_i, \lambda\wedge d\lambda)$ is the limit of the sequence $\lk(h_i, I_k)$ when $k\to\infty$, we obtain that for $k$ large enough, all linking numbers $\lk(h_i, I_k)$, $i=1, \dots, n$, are positive. 
This implies that~$I_k$ links positively with all broken binding orbits. 
Therefore we can take $K'=I_k$ and $y'=\lk(\cdot,I_k)$.

This argument is special to homology $3$-spheres, in general it needs to be replaced by a refined one found in Section~\ref{sec_general_scheme}.
Once these tools are in place, Theorem~\ref{thm:BirkhoffSection} has the following simple proof.

\begin{proof}
[Proof of Theorem~\ref{thm:BirkhoffSection}]
By~\cite[Theorem~1.1]{CDR} we know that the Reeb vector field of a nondegenerate contact form is strongly carried by a broken book decomposition, see Definition~\ref{defn_BB} and Remark~\ref{rmk_strongly_carried}. An application of Proposition~\ref{prop_gen_1} with $L=K_b$ tells us that the hypotheses of Theorem~\ref{thm_existence_Birkhoff_sections} are satisfied.
\end{proof}

\begin{remark}
\label{remark: CM}
Contreras and Mazzucchelli studied in~\cite{CM} the closure of the invariant manifolds of the broken binding orbits, and found many homoclinic connections. 
Once enough such homoclinic connections are found,   the strategy described in \cite{CDR} and also implemented in the proof of Theorem~\ref{thm: section} applies. It is possible to show that there are additional periodic orbits spanned by transverse surfaces that intersect the broken binding orbits.
In~\cite[Theorem 4.13]{CDR} it is then explained that these additional surfaces can be ``added'' to form a new broken book decomposition carried by the contact form, but with strictly less broken binding orbits.
After a finite number of steps, one gets a rational open book decomposition whose pages are Birkhoff sections.
A key contribution from~\cite{CM}, which seems surprising and of independent interest, is that under the strong nondegeneracy assumption the closure of stable and unstable manifolds of the broken binding orbits coincide.
Using this fact, the desired additional transverse surfaces can be constructed and the strategy described above can be applied.
At this point of the argument, these additional surfaces provide precisely the data $K',y'$ and the reduction process can be replaced by our Theorem~\ref{thm_existence_Birkhoff_sections}.
\end{remark}

\subsection*{Acknowledgments}
Theorem~\ref{thm: section} and the use of connecting lemmas were suggested to us by Fran\c cois B\'eguin. 
The implementation of these in the proof of Theorem~\ref{thm: section} benefited from the clarifications of Sylvain Crovisier.
The proof of Theorem~\ref{thm: entropy} was suggested to us by Sobhan Seyfaddini, and some details were explained by Patrice Le Calvez.
We warmly thank them all for their time, patience and expertise.
Agustin Moreno and Abror Pirnapasov had also independently noticed that the finite-energy foliations from~\cite{fols}, combined with the results by Le Calvez and Sambarino, imply $C^\infty$-genericity of positive entropy for Reeb flows on the tight $3$-sphere.
We thank the Institut Henri Poincar\'e, where we had a working space during the trimester ``Symplectic topology, contact topology and interactions'' and the Oberwolfach Institute where part of these results were discussed.
We acknowledge partial support by the DFG SFB/TRR 191 ‘Symplectic Structures in Geometry, Algebra and Dynamics’, Projektnummer 281071066-TRR 191 and by the ANR grants Gromeov, IdEx UGA, Quantact, COSY and CoSyDy.

\section{Broken book decompositions}
\label{sec_BBDs}

\subsection{Rotation numbers along periodic orbits}
\label{ssec_rot_numbers}

Let $M$ be any orientable smooth $3$-manifold, and let $X$ be a smooth vector field on $M$.
Consider a periodic orbit $\gamma \subset M \setminus \partial M$ of $X$, with primitive period $T>0$, as a knot oriented by $X$. 
This orientation and the ambient orientation together co-orient~$\gamma$. 
Denote by $\phi^t$ the local flow near~$\gamma$.
Consider a small compact neighborhood~$N$ of $\gamma$, and an orientation preserving diffeomorphism
\begin{equation}
\label{tubular_map}
\Psi: N \to \R/T\Z \times \D
\end{equation}
satisfying $\Psi(\phi^t(p_0)) = (t,0)$ for some $p_0\in\gamma$.
Here the closed unit disk $\D\subset\C$ and the circle $\R/T\Z$ are oriented by the canonical orientations of $\C$ and $\R$ respectively, and $\R/T\Z \times \D$ gets the product orientation.
On $N\setminus\gamma$ we have coordinates
\begin{equation}
\label{polar_tubular_coordinates}
(t,r,\theta) \in \R/T\Z \times (0,1]\times\R/2\pi\Z, \qquad \Psi^{-1}(t,re^{i\theta}) \simeq (t,r,\theta)
\end{equation}
loosely referred to as tubular polar coordinates around $\gamma$. 
Consider the vector bundle $E_\gamma = TM|_{\gamma} / T\gamma \to \gamma$ oriented by the co-orientation of $\gamma$, and denote by $E_\gamma^*$ the complement of its zero section.
The total space of the circle bundle $E_\gamma^*/\R_+ \to \gamma$ is a torus, which can be equipped with global coordinates $(t,\theta) \in \R/T\Z \times \R/2\pi\Z$ induced by $\Psi$. 
The linearized flow $D\phi^t$ along $\gamma$ descends to a flow on $E_\gamma^*/\R_+$ represented as the flow of a vector field of the form
\begin{equation}
\label{form_of_ODE_linearized_flow}
\partial_t+b(t,\theta)\partial_\theta
\end{equation}
on this torus. 
The smooth function $b(t,\theta)$ is $(T\Z\times 2\pi\Z)$-periodic.

\begin{definition}
If $y \in H^1(N \setminus \gamma;\R)$ is cohomologous to $p \, dt + q \, d\theta$, with constants $p,q \in \R$, then define the rotation number of $\gamma$ relative to $y$ as
\begin{equation}
\label{def_formula_rot_number}
\rho^y(\gamma) = \frac{T}{2\pi} \left( p+q\lim_{t\to+\infty}\frac{\theta(t)}{t} \right)
\end{equation}
where $\theta:\R\to\R$ is any solution of $\dot\theta(t)=b(t,\theta(t))$.

\end{definition}

\begin{remark}
The rotation number $\rho^y(\gamma)$ does not depend on the choice of $\Psi$, or on the choice of the solution $\theta(t)$; see~\cite[Section~2]{Hry_SFS} for details.
\end{remark}

\begin{remark}
If $\gamma\subset M \setminus \partial M$ as above is a component of some link $L \subset M$ and $y \in H^1(M\setminus L;\R)$, then we may identify $y$ with the element of $H^1(N \setminus \gamma;\R)$ obtained by pulling $y$ back via the inclusion map $N \setminus \gamma \hookrightarrow M\setminus L$. We still write $\rho^y(\gamma)$ for the corresponding rotation number, with no fear of ambiguity.
\end{remark}

\subsection{Broken books and Birkhoff sections}

Consider a closed, connected, orientable and smooth $3$-manifold $M$ with a smooth nonsingular vector field $X$.
The flow of $X$ is denoted by $\phi^t$.

\begin{definition}
\label{defn_BS_GSS}
A section for the flow of $X$ is an immersion $\iota : S \to M$ defined on a compact surface $S$ such that:
\begin{itemize}
\item[(i)] If $\partial S \neq \emptyset$ then $\iota(\partial S)$ is a link consisting of periodic orbits of $X$. 
\item[(ii)] $\iota^{-1}(\iota(\partial S)) = \partial S$ and $\iota$ defines an embedding $S\setminus \partial S \hookrightarrow M\setminus \iota(\partial S)$ transverse to $X$.
\end{itemize}
A Birkhoff section, or rational global surface of section, for the flow of $X$ is a section such that:
\begin{itemize}
\item[(iii)] For every $p\in M$ there exist $t_-<0<t_+$ such that $\phi^{t_\pm}(p) \in \iota(S)$.
\end{itemize}
If $\iota$ is simultaneously an embedding and a Birkhoff section, then $\iota(S)$ is called a global surface of section for the flow of $X$.
\end{definition}

If $\gamma$ is a periodic orbit of $X$ then $E_\gamma = TM|_\gamma/T\gamma$ is a vector bundle over~$\gamma$. 
Hence $\mathbb{P}^+\gamma = (E_\gamma \setminus 0)/\R_+ \to \gamma$ is a circle bundle.
Note that the linearized flow $D\phi^t|_\gamma$ determines a smooth flow on $\mathbb{P}^+\gamma$.
We call this flow the linearized flow on $\mathbb{P}^+\gamma$, with no fear of ambiguity. 
If $\iota:S\to M$ is a section for the flow of $X$ and $c$ is a connected component of $\partial S$ such that $\iota(c) =\gamma$, then $\iota$ defines a smooth map $\nu^c_\iota:c \to \mathbb{P}^+\gamma$ as follows: choose any smooth vector field $n$ of $S$ along $c$ pointing outwards, so that $d\iota(n)$ is a map $c \to TM|_\gamma$, and define $\nu^c_\iota$ to be the map obtained by composing $d\iota(n)$ with the quotient map $TM|_\gamma \setminus T\gamma \to \mathbb{P}^+\gamma$.
The definition of $\nu^c_\iota$ does not depend on the choice of $n$.
The trace of $\iota$ along $c$ is defined as the image of the map $\nu^c_\iota$, in particular it is a subset of $\mathbb{P}^+\gamma$.

\begin{definition}
Let $\iota:S\to M$ be a section for the flow of $X$.
\begin{itemize}
\item We call $\iota$ a $\partial$-strong section if its trace along every connected component $c \subset \partial S$ is an embedding transverse to the linearized flow on $\mathbb{P}^+\gamma$; here $\gamma$ is the periodic orbit in $\iota(\partial S)$ that contains $\iota(c)$.
\item If $\iota$ is a Birkhoff section then we call $\iota$ a $\partial$-strong Birkhoff section if it is a $\partial$-strong section, and if for every connected component $c \subset \partial S$ its trace along $c$ defines a global section for the linearized flow on $\mathbb{P}^+\gamma$; here $\gamma$ is the periodic orbit that contains $\iota(c)$.
\end{itemize}
\end{definition}

\begin{remark}
When the $\partial$-strong Birkhoff section is a global surface of section, then we get a $\partial$-strong global surface of section as in~\cite[Definition~1.6]{FH}.
\end{remark}

\begin{definition}[\cite{CDR}]
\label{defn_BB}
Assume now that $M$ is oriented.
The vector field $X$ is said to be strongly carried by a broken book decomposition $(K,\F)$, where the binding $K \subset M$ is a link consisting of periodic orbits and $\F$ is a smooth foliation of $M \setminus K$, if the following conditions are satisfied.
\begin{itemize}
\item[(I)] The binding $K$, which is oriented by the flow, splits into two sublinks as $K = K_r\sqcup K_b$, where $K_r$ is called the radial part of the binding and $K_b$ the broken part of the binding. 
The leaves of $\F$ are transverse to and co-oriented by $X$, and oriented by the orientation of $M$ and this co-orientation. 
In particular, trajectories intersect the leaves positively.
\item[(II)] For every leaf $\ell$ of $\F$ there exists a compact connected oriented surface~$S$ with non-empty boundary, and a 
section $\iota : S \to M$ for the flow such that $\iota|_{S\setminus\partial S}$ defines an orientation preserving diffeomorphism $S \setminus \partial S \to \ell$, and $\iota|_{\partial S}$ defines a (not necessarily surjective) submersion $\partial S \to K$. 
Let $c$ be a connected component of $\partial S$ with $\iota(c) = \gamma \subset K$, in which case we say that $\gamma$ is in the boundary of $\ell$. 
Let $k \in \Z\setminus\{0\}$ be the degree of $\iota|_c:c\to \gamma$, and let $\ell^* \in H^1(M\setminus K;\R)$ be the class dual to $\ell$.
\begin{itemize}
\item[(a)] If $\gamma \in K_r$ then $\rho^{\ell^*}(\gamma)>0$.
\item[(b)] If $\gamma \in K_b$ then $|k| \in \{1,2\}$, $\rho^{\ell^*}(\gamma)=0$, $\gamma$ is hyperbolic and its transverse linearized Poincar\'e map has real eigenvalues $\alpha<\beta$. 
If $|k|=1$ then $\gamma$ is positive hyperbolic in the sense that $0<\alpha<1<\beta$. 
If $|k|=2$ then $\gamma$ is negative hyperbolic in the sense that $\alpha<-1<\beta<0$.
\end{itemize}
\item[(III)] If $\gamma \subset K_r$ then the intersection of the leaves of $\F$ with a small disk $D$ transverse to $\gamma$ defines a radial foliation of $D \setminus \gamma$ centered at $D \cap \gamma$. 
\item[(IV)] If $\gamma \subset K_b$ then the intersections of the leaves of $\F$ with a small disk $D$ transverse to $\gamma$ divide $D\setminus \gamma$ into eight  sectors centered at $D\cap \gamma$. 
Four of these sectors do not intersect $W^s(\gamma) \cup W^u(\gamma)$, are radially foliated and might have empty interior. 
These are intercalated by four open sectors containing $(W^s(\gamma) \cup W^u(\gamma)) \cap (D\setminus \gamma)$, which are foliated by hyperbola. 
\end{itemize}
\end{definition}

\begin{figure}[ht]
\centering
\includegraphics[width=.3\textwidth]{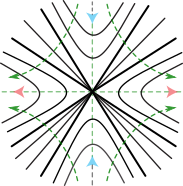}\quad
\includegraphics[width=.3\textwidth]{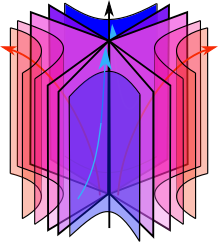}\qquad
\includegraphics[width=.3\textwidth]{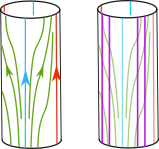}
\caption{A broken book decomposition strongly carrying a vector field $X$ in a neighbourhood of a broken binding component~$\gamma$. 
On the left, the transversal view:  the intersection of the broken book with a disc transverse to~$X$. 
The special leaves are bolded. 
Some orbits of~$X$ are represented with green dotted lines. 
In particular the four local stable/unstable manifolds lie in four different hyperbolic sectors.
In the center the 3d-view. 
On the right, the dynamics on the blown-up broken binding orbit $\mathbb{P}^+\gamma$ and the special leaves (purple):
they are transverse to the projectivized flow and intersect all its orbits, except those corresponding to the stable and unstable manifolds.
This manifests the fact that the rotation number of the flow along a broken binding component with respect to the union of the special leaves $\rho^{\ell^*}(\gamma)$ is zero.}
\label{figure: brokencomponent1}
\end{figure}

\begin{remark}
\label{rmk_strongly_carried}
In~\cite{CDR} it is defined that a contact form is carried by the broken book decomposition $(K,\F)$ if all the properties in Definition~\ref{defn_BB} hold for the Reeb vector field $X$, except for (IIa). 
Without further assumptions, one gets a non-strict inequality $\rho^{\ell^*}(\gamma) \geq 0$ for all $\gamma \subset K_r$ and for all leaves $\ell$. 
But if $\rho^{\ell^*}(\gamma) = 0$ for some $\gamma \subset K_r$ and some leaf $\ell$ that has $\gamma$ on the boundary, then some iterate of $\gamma$ is degenerate in the sense that $1$ is an eigenvalue of the corresponding transverse Poincar\'e map.
Hence, if $\lambda$ is a nondegenerate contact form carried by $(K,\F)$ as in~\cite{CDR}, then its Reeb vector field is strongly carried by $(K,\F)$ as in Definition~\ref{defn_BB}.
\end{remark}

\begin{remark}
The finite-energy foliations obtained by Hofer, Wysocki and Zehnder for a nondegenerate Reeb flow on the standard contact $3$-sphere are $\R$-invariant foliations of $\R \times S^3$ whose leaves are pseudo-holomorphic curves with finite Hofer energy, all of which have genus zero and precisely one positive puncture. 
They induce a broken book decomposition $(K,\F)$ as above with several special properties.
The $\R$-invariant cylinders over the orbits of $K$ are precisely the $\R$-invariant leaves of the finite-energy foliation, the other leaves project to the leaves of $\F$. 
Moreover, all orbits in $K$ have self-linking number equal to $-1$, all orbits in $K_b$ are positive hyperbolic, and each of the four radially foliated sectors in (IV) consists of a single ray. There are several other special additional properties.
The reader is referred to~\cite[section~1.4]{fols}.
\end{remark}

\begin{theorem}
\label{thm_existence_Birkhoff_sections}
Suppose that $X$ is strongly carried by a broken book decomposition $(K = K_b \sqcup K_r,\F)$, and that there exists a link $K' \subset M \setminus K_b$ consisting of periodic orbits and a class $y' \in H^1(M \setminus K';\R)$ satisfying $\left< y',\gamma \right> > 0$ for every $\gamma \subset K_b$. Then there is a $\partial$-strong Birkhoff section for the flow of $X$ with boundary in $K \cup K'$.
\end{theorem}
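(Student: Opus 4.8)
The plan is to derive the theorem from the Schwartzman--Fried--Sullivan existence criterion recorded in Theorem~\ref{thm_existence_from_SFS}, applied to the link $L:=K\cup K'$ together with a cohomology class on $M\setminus L$ built out of $y'$ and of the class dual to the rigid leaves of the broken book. Write $S_{R_\F}$ for the union of the rigid leaves of $\F$ (Section~\ref{ssec_rigid_leaves}) and recall that it is a section of the flow with $\partial S_{R_\F}\subset K$, positively transverse to $X$ in its interior, and meeting every orbit of $X$; let $y_0\in H^1(M\setminus K;\R)$ be the class dual to it, that is, the sum of the duals $\ell^*$ over all rigid leaves $\ell$. Pulling $y_0$ and $y'$ back to $H^1(M\setminus L;\R)$, I would study the family $y_n:=n\,y_0+y'$ for $n\in\N$ and show that, for $n$ large, $y_n$ satisfies the two hypotheses of Theorem~\ref{thm_existence_from_SFS} relative to $L$, namely positivity of the intersection with every $X$-invariant probability measure supported in $M\setminus L$ and positive winding along every component of $L$; the theorem then produces a $\partial$-strong Birkhoff section with boundary in $K\cup K'$, which is what is claimed.

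The properties of $y_0$ I would invoke, all of which belong to the broken-book machinery of~\cite{CDR} and Section~\ref{ssec_rigid_leaves}, are three. First, since the rotation number $\rho^{y}(\gamma)$ depends $\R$-linearly on $y$, Definition~\ref{defn_BB}(IIa) gives $\rho^{y_0}(\gamma)>0$ for every $\gamma\subset K_r$. Second, Definition~\ref{defn_BB}(IIb), together with the hyperbolicity of the broken binding orbits, so that the linearized flow on $\mathbb{P}^+\gamma$ has vanishing asymptotic rotation, gives $\rho^{y_0}(\gamma)=0$ for every $\gamma\subset K_b$. Third, outside a fixed neighbourhood $U$ of $K_b$ (chosen with $\overline{U}\cap(K_r\cup K')=\emptyset$) the section $S_{R_\F}$ has uniformly bounded return time, which translates into a quantitative positivity $\mu\cdot y_0\ge c_0\,\mu(M\setminus U)$ for some $c_0>0$ and every $X$-invariant probability measure $\mu$ supported in $M\setminus K$; in particular $\mu\cdot y_0>0$ for every such measure, and an invariant measure intersects $y_0$ trivially only if it is carried by $K_b$.

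Granting this, the verification would run as follows. For the winding condition along a component $\gamma$ of $L$, use $\rho^{y_n}(\gamma)=n\,\rho^{y_0}(\gamma)+\rho^{y'}(\gamma)$ and split into cases. If $\gamma\subset K_b$, then $\gamma$ is disjoint from $K'$, so $y'$ restricts to a tubular neighbourhood of $\gamma$ with vanishing $d\theta$-component and $\rho^{y'}(\gamma)=\langle y',\gamma\rangle/(2\pi)>0$ by hypothesis, while $\rho^{y_0}(\gamma)=0$; hence $\rho^{y_n}(\gamma)>0$ for all $n$. If $\gamma\subset K_r$, then $\rho^{y_0}(\gamma)>0$ dominates $\rho^{y'}(\gamma)$ as soon as $n$ is large. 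If $\gamma\subset K'\setminus K$, then $\gamma$ is disjoint from $K$, hence is not a binding orbit and so crosses $S_{R_\F}$ transversally and positively, giving $\rho^{y_0}(\gamma)=\langle y_0,\gamma\rangle/(2\pi)>0$, and again $n$ large suffices; since $L$ has finitely many components, a single $n$ works for all of them. For the measure condition, I would choose the representative of $y'$ that equals $(\langle y',\gamma\rangle/T_\gamma)\,dt$ in flow-box coordinates around each $\gamma\subset K_b$, so that its contraction with $X$ is a positive constant on $U$ and bounded in absolute value on all of $M\setminus L$; combined with $\mu\cdot y_0\ge c_0\,\mu(M\setminus U)$ this gives, for every invariant $\mu$ supported in $M\setminus L$, a lower bound for $\mu\cdot y_n$ which is a positive combination of $\mu(U)$ and $\mu(M\setminus U)$ once $n$ is large, hence strictly positive.

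The step I expect to be the crux is this uniform choice of $n$, and with it the quantitative positivity of $y_0$: what is really needed is not merely that $y_0$ pairs positively with every invariant measure other than those carried by $K_b$, but that this positivity degrades in a controlled way --- proportionally to the mass lying away from $K_b$ --- as the measure concentrates near $K_b$, so that after rescaling by $n$ it overwhelms the bounded error of $y'$ outside $U$, while the genuine positivity of $y'$ coming from the hypothesis $\langle y',\gamma\rangle>0$ takes over inside $U$. This is the point where the fine local picture of the broken book near a broken binding component --- the eight-sector description of Definition~\ref{defn_BB}(IV) and the return-time control for $S_{R_\F}$ it provides --- is essential; once that input is available, the rotation-number bookkeeping and the concluding appeal to Theorem~\ref{thm_existence_from_SFS} are routine.
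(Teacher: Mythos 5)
Your overall architecture coincides with the paper's: both arguments take $L=K\cup K'$, form the class $n\,y_0+y'$ with $y_0=\sum_{\ell\in R_\F}\ell^*$ dual to the rigid leaves, verify the rotation-number conditions along $K_r$, $K_b$ and $K'$ exactly as you do (this is the content of Lemma~\ref{lemma_rot_numbers_measures}, Lemma~\ref{lemma_secondary_ergodic} and item (II) of Definition~\ref{defn_BB}; note that for $\gamma\subset K_r$ you also need Lemma~\ref{lemma_rigid_at_K_r}, i.e.\ that some rigid leaf actually ends on $\gamma$, and that the case $K_b=\emptyset$, where there may be no rigid leaves at all, must be treated separately as a rational open book), and then invoke Theorem~\ref{thm_existence_from_SFS}. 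Where you genuinely diverge is in how the coefficient of $y_0$ is chosen uniformly over all invariant measures: the paper never proves your quantitative estimate $\mu\cdot y_0\ge c_0\,\mu(M\setminus U)$; it only proves $\mu\cdot y_0>0$ for each ergodic $\mu$ on $M\setminus K$ (Lemma~\ref{lemma_crucial_ergodic}) and then obtains uniformity through a weak* compactness and ergodic-decomposition argument on the blown-up manifold $D_{K\cup K'}$ (the sets $Z$, $Z'$ and the constants $c,d,a$ in Section~\ref{sec_getting_BSs}). If your uniform estimate is established, your route is more direct and makes the choice of $n$ elementary.

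The weak point is exactly the step you flag as the crux, and the justification you offer for it is not correct as stated: it is false that the return time to $S_{R_\F}$ is uniformly bounded for points outside a neighbourhood $U$ of $K_b$. A point just outside $U$ on the stable manifold of a broken binding orbit enters $U$, converges to that orbit inside a hyperbolic sector of the local model (IV), and never meets a rigid leaf again, so its forward return time is infinite. What is true --- and what the paper proves as Lemma~\ref{lemma_times_hit_rigid} --- is the weaker statement that an orbit segment staying a definite distance from $K_b$ for time $T_*$ must cross a rigid leaf; combined with the isolating property near $K_b$ coming from (IV) (every passage of an orbit through a small isolating neighbourhood of $K_b$ contains a crossing of a rigid leaf, property ($*$) in the proof of Lemma~\ref{lemma_crucial_ergodic}), a Birkhoff-average count of hitting times does yield $\mu\cdot y_0\ge c_0\,\mu(M\setminus N_b)$ with $c_0$ of order $1/T_*$, first for ergodic measures (which give no mass to $W^s(K_b)\cup W^u(K_b)$ by recurrence) and then for all invariant measures by ergodic decomposition. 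So your plan can be completed, but the crux estimate requires this hitting-time analysis --- in effect a uniform version of Lemma~\ref{lemma_crucial_ergodic} --- and cannot be obtained from a bounded-return-time claim.
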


Let $\lambda$ be a positive contact form on $M$. Consider the space $C^0(M)$ of continuous real-valued functions on $M$. It becomes a Banach space with the supremum norm. 
As described in the introduction, we shall say that the Liouville measure  can be approximated by periodic orbits if there exists a sequence of weighted links of periodic Reeb orbits 
\begin{equation*}
(\{\gamma^n_j\},\{p^n_j\}) \qquad \qquad j=1,\dots,N(n)
\end{equation*}
where each $\gamma^n_j$ is a periodic Reeb orbit with primitive period $T(\gamma^n_j) > 0$ and the weights~$p^n_j \in (0,1]$ satisfy $\sum_j p^n_j = 1$, such that 
$$ 
\lim_{n\to\infty} \sum_j p^n_j \frac{(\gamma^n_j)_*\Leb}{T(\gamma^n_j)} = \frac{\lambda \wedge d\lambda}{{\rm vol}(\lambda)} \, . 
$$
Here the orbits are seen as maps $\gamma^n_j:\R/T(\gamma^n_j)\Z \to M$ parametrized by the flow, $\Leb$ denotes Lebesgue measure on $\R/T(\gamma^n_j)\Z$, ${\rm vol}(\lambda) = \int_M \lambda \wedge d\lambda$, and the limit is taken in the weak* topology of the topological dual $C^0(M)'$ of $C^0(M)$.

\begin{proposition}
\label{prop_gen_1}
Suppose that $X$ is the Reeb vector field of a positive contact form on $M$ such that $\lambda\wedge d\lambda$ can be approximated by periodic Reeb orbits.
Let $L \subset M$ be a link consisting of periodic Reeb orbits.
Then there exists a link $K' \subset M \setminus L$ and $y' \in H^1(M\setminus K';\R)$ such that $K'$ consists of periodic Reeb orbits, and $\left< y',\gamma \right> > 0$ for every $\gamma \subset L$.
\end{proposition}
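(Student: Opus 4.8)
The plan is to realize the desired class $y'$ as a linking-type class coming from a weighted link of periodic orbits that approximates the Liouville measure. The key heuristic is the \emph{Action--Linking Lemma} (a small variant of the one from~\cite{BSHS}): for a suitable surface $\Sigma$ bounded by a component $\gamma$ of $L$, the intersection number of the Liouville measure $\lambda\wedge d\lambda$ with $\Sigma$ (a topological quantity, read off cohomologically) equals, up to the normalizing constant $\vol(\lambda)$, the contact area $\int_\Sigma d\lambda$, which in turn can be expressed in terms of the actions of $\gamma$ via Stokes. In particular this intersection number is \emph{positive}. Since the Liouville measure is a weak* limit of weighted periodic-orbit measures $\mu_n = \sum_j p^n_j (\gamma^n_j)_*\Leb/T(\gamma^n_j)$, and the relevant intersection pairing is (after a suitable cohomological reformulation) weak* continuous, the intersection number of $\mu_n$ with $\Sigma$ converges to that of $\lambda\wedge d\lambda$; hence for $n$ large it is positive.

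\medskip

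Concretely, I would proceed as follows. First, fix the link $L = h_1 \cup \dots \cup h_m$ of periodic orbits and choose, for each component, a Seifert-type framing / a rational $2$-chain so that the linking pairing $\lk(\cdot, h_i)$ makes sense; this is automatic when $M$ is a rational homology sphere, and in general one works in $H^1(M\setminus L;\R)$ using the intersection pairing between invariant measures and cohomology classes described in Section~\ref{ssec_invt_measures_int_numbers}. Second, I would state and prove the modified Action--Linking Lemma: for any weighted link $I = (\{\gamma_j\},\{p_j\})$ whose associated measure is close (in weak* sense) to $(\lambda\wedge d\lambda)/\vol(\lambda)$, one has that the ``linking'' of $I$ with each $h_i$ is close to $\lk(h_i, \lambda\wedge d\lambda)/\vol(\lambda)$, and the latter equals (a positive multiple of) the contact area of a spanning surface, hence is strictly positive. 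Third, apply this with $I = I_n$ for $n$ large: all the numbers $\lk(h_i, I_n)$, $i=1,\dots,m$, are then positive. One must also ensure $I_n$ can be taken disjoint from $L$; since each weighted orbit is either disjoint from $L$ or equal to a component of $L$, and an orbit equal to $h_i$ contributes the (positive) self-term, one can either absorb such terms or perturb slightly — the sign of the total pairing is what matters. Fourth, set $K' := I_n$ (the underlying link, with components in $M\setminus L$) and let $y' \in H^1(M\setminus K';\R)$ be the class represented by $\sum_j p_j\,\lk(\cdot,\gamma_j)$ (equivalently, the Poincaré dual of the weighted $2$-chain bounded by $I_n$); then $\langle y',h_i\rangle = \lk(h_i, I_n) > 0$ for every $h_i \subset L$, which is exactly the conclusion.

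\medskip

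The main obstacle I anticipate is twofold. The first, more technical point is making the intersection pairing between invariant \emph{measures} (not just homology classes) and cohomology classes genuinely weak* continuous, and relating it to the contact area: the pairing $\langle \mu, y\rangle$ of an invariant measure with $y \in H^1$ is defined via a closed $1$-form representative and the asymptotic cycle, and one needs the Action--Linking Lemma to identify $\langle \lambda\wedge d\lambda, \mathrm{PD}[\Sigma]\rangle$ with $\int_\Sigma d\lambda$ — this is where the contact geometry (as opposed to pure topology) enters, and where the normalization by $\vol(\lambda)$ must be tracked carefully. The second point is the homological bookkeeping away from rational homology spheres: here $L$ need not bound, so there is no honest Seifert surface, and one must either pass to a finite cover, work with real (rather than integral) $2$-chains, or — as sketched in the introduction — run the ``refined'' argument of Section~\ref{sec_general_scheme} which constructs $K'$ and $y'$ component-by-component while controlling the rotation numbers. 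Modulo these, the positivity statement $\langle y',\gamma\rangle>0$ for $\gamma\subset L$ follows directly from weak* convergence plus positivity of the contact area, since the $h_i$ are genuine periodic Reeb orbits with $T(h_i)>0$.
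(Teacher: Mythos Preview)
Your proposal is essentially the paper's own argument in the special case of a rational homology $3$-sphere, and indeed the introduction to the paper sketches exactly the line you describe: use the Action--Linking Lemma to identify $\lk(h_i,\lambda\wedge d\lambda)$ with the action $T(h_i)>0$, then pass to the limit along the approximating links $I_n$ to get $\lk(h_i,I_n)>0$ for $n$ large, and finally take $K'=I_n$ and $y'=\lk(\cdot,I_n)$. For that case your proposal is correct and complete.

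The genuine gap is the general case, which you yourself flag as the ``main obstacle'' but do not resolve. Your candidate $y'=\sum_j p_j\,\lk(\cdot,\gamma^n_j)$ is not defined when the $\gamma^n_j$ are not boundaries in $H_1(M;\R)$, and none of the escape routes you list (finite covers, real $2$-chains, a component-by-component construction with rotation numbers) is what the paper actually does. The paper's argument is different in kind: rather than writing $y'$ explicitly as a linking class, one shows that for $n$ large the convex simplex $\Cc=\{\sum a_k h_k:a_k\ge0,\ \sum a_k=1\}$ is disjoint from the space $B_1(M\setminus I^n)$ of boundaries, and then invokes Hahn--Banach (plus reflexivity of currents) to produce a closed $1$-form on $M\setminus I^n$ that is positive on each $h_k$. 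The Action--Linking Lemma enters only for those integer combinations $\sum x_k h_k$ that \emph{do} bound in $M$, and the key additional step (Lemma~\ref{lemma_asymptotic_positivity}) is a finite-dimensionality argument: if $\Cc\cap B_1(M\setminus I^{n_j})\neq\emptyset$ along a subsequence, the corresponding $2$-chains $A_j$ would give infinitely many independent classes in $H_2(M,L;\R)$, detected by pairing with Poincar\'e duals $\Theta_n$ of the approximating links. This step has no analogue in your outline, and without it the argument does not close when $H_1(M;\R)\neq 0$.
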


\section{From linking with the broken binding to Birkhoff sections}
\label{sec_getting_BSs}

The goal of this section is to prove Theorem~\ref{thm_existence_Birkhoff_sections}.
Throughout this section we consider a broken book decomposition $(K,\F)$ strongly carrying a smooth vector field $X$ on a closed, connected and oriented $3$-manifold $M$  as in Definition~\ref{defn_BB}. The flow of $X$ is denoted by $\phi^t$.

Note that $K_b = \emptyset$ if, and only if, the broken book decomposition is a rational open book decomposition as defined in~\cite{BEvHN}, whose pages are Birkhoff sections.
In this case, let $y \in H^1(M\setminus K;\R)$ be Poincar\'e dual to the class of a page in $H_2(M,K;\Z)$.
Assumption (ii) in Theorem~\ref{thm_existence_from_SFS} follows from (II.a) in Definition~\ref{defn_BB}.
Assumption (i) in Theorem~\ref{thm_existence_from_SFS} follows from the fact that the return time back to a page is uniformly bounded; (II.a) in Definition~\ref{defn_BB} plays an important role here.
Theorem~\ref{thm_existence_from_SFS} provides the desired $\partial$-strong Birkhoff section.

\begin{remark}
The notion of $\partial$-strong transverse surfaces from~\cite{FH} was incorporated in~\cite{CDR}.
It is shown by~\cite[Theorem~1.1]{CDR} that a nondegenerate Reeb vector field on a closed $3$-manifold is strongly carried by a broken book decomposition whose pages are $\partial$-strong sections.
For the purposes of proving Theorem~\ref{thm:BirkhoffSection}, it suffices to prove a version of Theorem~\ref{thm_existence_Birkhoff_sections} where one starts from a vector field strongly carried by a broken book decomposition with this additional property.
Hence, the pages of a broken book are $\partial$-strong Birkhoff sections when $K_b =\emptyset$.
\end{remark}

Hence we proceed assuming that $K_b \neq \emptyset$.

\subsection{Blowing periodic orbits up} 
\label{ssec_blowing_up}

The discussion here makes no use of the broken book decomposition.
 We consider any link $L \subset M$ made of periodic orbits of~$\phi^t$, oriented by the flow. 
For each orbit $\gamma \subset L$ denote by $T_\gamma > 0$ its primitive period, and let $N_\gamma$ be a small tubular neighborhood of~$\gamma$ with an orientation preserving diffeomorphism $\Psi_{\gamma}:N_{\gamma} \to \R/T_{\gamma}\Z\times \D$ as in~\eqref{tubular_map}.
On $N_{\gamma} \setminus \gamma$ we have tubular polar coordinates $(t,r,\theta) \in \R/T_\gamma\Z \times (0,1]\times\R/2\pi\Z$, $\Psi_{\gamma}^{-1}(t,re^{i\theta}) \simeq (t,r,\theta)$ as in~\eqref{polar_tubular_coordinates}.
One can blow the link $L$ up to construct a smooth $3$-manifold $M_L$ defined as
\begin{equation}
\label{blown_up_manifold}
M_L := \left. \left\{ M\setminus L \ \ \sqcup \ \ \bigsqcup_{\gamma \subset L} \R/T_{\gamma}\Z \times (-\infty,1]\times \R/2\pi\Z \right\} \right/ \sim
\end{equation}
where $(t,r,\theta)$ in $\R/T_{\gamma}\Z \times (0,1] \times \R/2\pi\Z$ gets identified with $\Psi^{-1}_\gamma(t,re^{i\theta})$. 
On $M_L$ there are $\#\pi_0(L)$ smoothly embedded tori
\begin{equation}
\label{tori_new}
\Sigma_\gamma = \R/T_{\gamma}\Z \times \{0\} \times \R/2\pi\Z
\end{equation}
with coordinates $(t,\theta) \in \R/T_{\gamma}\Z \times \R/2\pi\Z$, and $M_L$ contains a smooth compact domain
\begin{equation}
\label{domain_D_L}
D_L = \left. \left\{ M\setminus L \ \ \sqcup \ \ \bigsqcup_{\gamma \subset L} \R/T_{\gamma}\Z \times [0,1]\times \R/2\pi\Z \right\} \right/ \sim
\end{equation}
satisfying
$$
\partial D_L = \bigsqcup_{\gamma \subset L} \Sigma_\gamma \, .
$$
Note that $D_L \setminus \partial D_L = M\setminus L$.
Note also that $(t,r,\theta) \mapsto \Psi_{\gamma}^{-1} (t,re^{i\theta})$ defines a smooth diffeomorphism from $\R/T_{\gamma}\Z \times (0,1] \times \R/2\pi\Z$ to $N_{\gamma}\setminus\gamma$.

It follows from arguments originally due to Fried~\cite{fried} that $X$ can be smoothly extended from $M \setminus L$ to a vector field $X_L$ on $M_L$, whose flow we denote by $\phi_L^t$.
The restriction of $X_L$ to $D_L$ is unique, and $X_L$ is tangent to $\partial D_L$.
The details we need on Fried's construction can be found below, for more details see~\cite[Section~3]{Hry_SFS}. 
In particular, it is important to know that there is a precise relation between the dynamics of $X_L$ on $\Sigma_\gamma$ and the linearized dynamics of $X$ along $\gamma$. To see this, denote by $Z = (\Psi_{\gamma})_*X$ the representation of $X$ in $\R/T_{\gamma}\Z \times \D$. By the fundamental theorem of calculus we get that 
\begin{equation*}
Z(t,0) = \begin{pmatrix} 1 \\ 0 \end{pmatrix} \qquad \Rightarrow \qquad Z(t,re^{i\theta}) = \begin{pmatrix} 1 \\ 0 \end{pmatrix} + A(t,re^{i\theta})re^{i\theta}
\end{equation*}
where
\begin{equation*}
A(t,re^{i\theta}) = \int_0^1 D_2Z(t,\tau re^{i\theta}) \ d\tau = \begin{pmatrix} A_1(t,re^{i\theta}) \\ A_2(t,re^{i\theta}) \end{pmatrix}
\end{equation*}
and $D_2$ stands for the partial derivative in the $\D$-factor. 
Note also that
\begin{equation*}
D_2Z(t,0) = \begin{pmatrix} A_1(t,0) \\ A_2(t,0) \end{pmatrix} \, .
\end{equation*}
Now consider the smooth map $$ \Phi : \R/T_{\gamma}\Z \times [0,1] \times \R/2\pi\Z \to \R/T_{\gamma}\Z \times \D \qquad\qquad (t,r,\theta) \mapsto (t,re^{i\theta}) \, . $$
Note that $\Phi$ defines a diffeomorphism $\R/T_{\gamma}\Z \times (0,1] \times \R/2\pi\Z \simeq \R/T_{\gamma}\Z \times (\D\setminus\{0\})$ that can be used to pull $Z|_{\R/T_{\gamma}\Z \times (\D\setminus\{0\})}$ back to a vector field $W$. Since $$ D\Phi^{-1}(t,re^{i\theta}) = \begin{pmatrix} 1 & 0 \\ 0 & \begin{pmatrix} \cos\theta & \sin\theta \\ -r^{-1}\sin\theta & r^{-1}\cos\theta \end{pmatrix} \end{pmatrix} $$ it follows that
\begin{equation*}
\begin{aligned} 
&W(t,r,\theta) = D\Phi^{-1}(t,re^{i\theta}) Z(t,re^{i\theta}) \\
&= (1+A_1(t,re^{i\theta})re^{i\theta}) \partial_t + \left< e^{i\theta},A_2(t,re^{i\theta})re^{i\theta} \right> \partial_r + \left< ie^{i\theta},A_2(t,re^{i\theta})e^{i\theta} \right> \partial_\theta
\end{aligned}
\end{equation*}
extends smoothly to $\R/T_{\gamma}\Z \times [0,1] \times \R/2\pi\Z$. 
Since at $r=0$ the component in $\partial_r$ vanishes, we conclude that the extension $X_L$ of $X$ from $M\setminus L$ to $D_L$ is tangent to~$\partial D_L$.
The restriction of $X_L$ to $\Sigma_\gamma$ is then of the form
\begin{equation}
\label{function_b}
\partial_t + b(t,\theta) \partial_\theta \qquad \qquad b(t,\theta) = \left< ie^{i\theta},A_2(t,0)e^{i\theta} \right> \, .
\end{equation}
With the above formula, the relation between the dynamics of $X_L$ on $\partial D_L$ and the linearized dynamics along the various orbits $\gamma \subset L$ becomes precise, since $b(t,\theta)$ is exactly the same as the function appearing in~\eqref{form_of_ODE_linearized_flow}. A solution $$ \begin{pmatrix} a(t) \\ u(t) \end{pmatrix} = D\phi^t(t_0,0) \begin{pmatrix} a_0 \\ u_0 \end{pmatrix}, \qquad u_0 = |u_0|e^{i\theta_0} \neq 0 $$ along $\gamma$ must satisfy
\begin{equation}
u(t) = |u(t)|e^{i\theta(t)} \qquad\qquad \dot\theta(t) = b(t+t_0,\theta(t)), \quad \theta(t_0)=\theta_0 \, .
\end{equation}

\subsection{Rigid leaves}
\label{ssec_rigid_leaves}

The set of rigid leaves~$R_\F$ was considered in~\cite[Definition~2.8]{CDR}. 
By definition, every leaf of a broken book decomposition is an open surface embedded in $M$.

\begin{definition}
A leaf $L$ of a broken book $(K, \mathcal{F})$ that belongs to the interior of
a 1-parameter family of pages of the form $L\times [0,1]$ is called regular. On the other hand, a leaf
that is not in the interior of such a 1-parameter family is called rigid.
\end{definition}

We have $K_b \neq\emptyset \Rightarrow R_\F \neq\emptyset$, in fact, leaves in the local model of (IV)~Definition~\ref{defn_BB} that divide radial sectors from sectors foliated by hyperbola are rigid.

\begin{lemma}
\label{lemma_foliation_away_rigid_leaves}
Assume that $R_\F \neq\emptyset$.
If $U\subset M$ is a connected open set whose closure does not intersect the union of closures of rigid leaves, then $\F|_U$ is given by the fibers of a submersion $U \to \R$.
\end{lemma}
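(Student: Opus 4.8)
The plan is to show that a broken book foliation, when restricted to a region far from the rigid leaves, has no holonomy and hence is given by a global transverse function. First I would recall the basic structure: by Definition~\ref{defn_BB}, the leaves of $\F$ are transverse to $X$ and co-oriented by it, so trajectories of $\phi^t$ cross $\F$ positively; in particular every leaf is a genuine two-dimensional foliation of the open manifold $M\setminus K$, with no Reeb components obstructing a transverse measure locally. The key dichotomy from~\cite{CDR} is that a leaf is either \emph{regular} --- sitting in the interior of a one-parameter family $\ell\times[0,1]$ of parallel pages --- or \emph{rigid}. So on $U$, whose closure avoids all closures of rigid leaves, \emph{every} leaf meeting $U$ is regular.

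Next I would set up the local transverse structure. Pick a point $p\in U$ and a short embedded arc $\tau_p$ through $p$ positively transverse to $\F$ (e.g. a short flow arc of $X$, which works since trajectories cross leaves positively). Using that the leaf $\ell_p$ through $p$ is regular, it comes with a product neighborhood $\ell_p\times(-\varepsilon,\varepsilon)$ of parallel leaves; shrinking, this gives a foliated chart in which $\F$ is the fibers of a submersion to an interval, and $\tau_p$ can be taken as the interval factor. The claim to establish is that the holonomy of $\F|_U$ along any loop in $U$ based at $p$ is trivial as a germ at $p$ along $\tau_p$. Once that is known, the standard argument (develop the local submersions along paths; well-definedness is exactly triviality of holonomy, using that $U$ is connected) produces a global submersion $f:U\to\R$ whose fibers are the plaques, i.e.\ $\F|_U$ is given by the fibers of $f$, as desired.

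The heart of the matter --- and the step I expect to be the main obstacle --- is proving this holonomy is trivial. The mechanism is: a nontrivial holonomy germ must have either a fixed point with nontrivial contraction/expansion on one side, or a moving point, and in the first case following the leaves one produces a leaf that is a limit of nearby leaves and is \emph{not} in the interior of a one-parameter family, hence is rigid, while its closure would have to meet $\overline U$ --- contradicting the hypothesis. More carefully, I would argue by contradiction: suppose some holonomy map $h$ along a loop $\delta\subset U$ is nontrivial on $\tau_p$ near $p$. Shrinking $\tau_p$, I may assume $h$ is defined on all of $\tau_p$ and, after possibly replacing $\delta$ by its inverse, that $h(q)$ lies strictly between $q$ and an endpoint for all $q$ in a half-open subarc; the supremum $q_\infty$ of the set where this happens is a fixed point of (an iterate of) $h$ with one-sided contraction. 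The leaf $\ell_\infty$ through $q_\infty$ then has the property that leaves through nearby transversal points spiral towards it along $\delta$, so $\ell_\infty$ cannot lie in the interior of a one-parameter family of parallel pages: it is rigid. But $q_\infty\in\overline{\tau_p}\subset\overline U$, so the closure of $\ell_\infty$ meets $\overline U$, contradicting the standing assumption on $U$. Hence $h$ is the identity germ.

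Two points require care and I would spell them out. First, one must make sure the transversal $\tau_p$ and the loop $\delta$ can be chosen \emph{inside} $U$ so that the whole holonomy construction takes place there; this is fine because $U$ is open and connected, but one should note that the relevant transversals are short flow segments, which exist through every point of $M\setminus K$ since $X$ is nonsingular and transverse to $\F$. Second, the passage ``limit leaf is rigid'' uses the precise definition of regular/rigid from~\cite{CDR}: a leaf in the interior of a family $\ell\times[0,1]$ has, at each of its points, a neighborhood of mutually non-accumulating parallel leaves, which is incompatible with being a one-sided limit of a spiralling family; this is the only place the broken book structure (as opposed to general foliation theory) enters, and it is where one should cite Definition~2.8 and the surrounding discussion of~\cite{CDR}. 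With holonomy triviality in hand, the construction of the global submersion $U\to\R$ is routine foliation theory and completes the proof.
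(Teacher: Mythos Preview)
Your approach via holonomy is different from the paper's (which directly parametrizes the connected components of $M\setminus Q$, where $Q$ is the union of closures of rigid leaves, by maximal $1$-parameter families of regular pages), and it has a genuine gap.

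The problem is the case where your holonomy $h$ along a loop $\delta\subset U$ is a nontrivial \emph{translation} of the transversal, with no fixed point at all. You acknowledge the ``moving point'' case but then try to handle it by iterating $h$ to a limit $q_\infty$ and declaring the leaf through $q_\infty$ rigid. This does not work: since every leaf meeting $U$ is regular, each sits in a genuine product family $\ell\times(-\varepsilon,\varepsilon)$ of \emph{full pages} of $\F$; consequently the classical leaf holonomy is already trivial and the monodromy of the developing map along any $\delta$ is a pure translation. If that translation is nonzero, iterating it has no fixed point on $\tau_p$; the endpoint $q_\infty$ of your chosen transversal is just an arbitrary regular leaf, with no spiralling toward it, and your rigidity contradiction evaporates. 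Concretely, nothing in your argument rules out the possibility that the leaf space of $\F$ on the component of $M\setminus Q$ containing $U$ is a circle rather than an interval --- in which case no submersion to $\R$ exists.

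This is precisely where the hypothesis $R_\F\neq\emptyset$ (which you never use) enters in the paper's proof: if some maximal $1$-parameter family of regular pages were parametrized by $S^1$, then by compactness of $S^1$ and the product structure this family would fill a nonempty open \emph{and closed} subset of the connected set $M\setminus K$, hence all of it, forcing $R_\F=\emptyset$. That global argument, not a local holonomy analysis, is what forces the parameter space to be an interval and yields the submersion. Your elaborate contracting-germ argument is addressing a phenomenon (nontrivial classical holonomy) that simply does not occur for broken books, while missing the one obstruction that does.
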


\begin{proof}
Let $Q$ be the union of the closures of the rigid leaves.
By~\cite[Definition~2.7]{CDR}, each nonrigid leaf belongs  to a local $1$-parameter family of nonrigid leaves. 
Hence, each of the finitely many connected components of $M\setminus Q$ can be foliated by a maximal $1$-parameter family of nonrigid leaves, parametrized by an open interval or a circle.
If one of these maximal families is a circle, then connectedness of $M$ implies that this is the only family, it foliates $M\setminus K$, and there are no rigid leaves. 
Hence, each maximal family is parametrized by an interval.
The parameter of such a family defines a submersion $U\to\R$ whose fibers define $\F|_U$.
\end{proof}


\begin{lemma}
\label{lemma_rigid_at_K_r}
If $K_b\neq \emptyset$ then for every $\gamma \subset K_r$ there is at least one rigid leaf with an end in $\gamma$.
\end{lemma}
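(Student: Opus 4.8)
We want to show that if $K_b \neq \emptyset$ then every radial binding orbit $\gamma \subset K_r$ has at least one rigid leaf with an end on it. The natural approach is to argue by contradiction: suppose some $\gamma \subset K_r$ has \emph{no} rigid leaf with an end on $\gamma$. Then, using property (III) of Definition~\ref{defn_BB}, near $\gamma$ the foliation $\F$ is radial on a small transverse disk $D$, so the leaves with an end on $\gamma$ sweep out a punctured tubular neighborhood $N_\gamma \setminus \gamma$. Since none of these leaves is rigid, each belongs to a local $1$-parameter family of nonrigid leaves, and the whole punctured neighborhood is foliated by a family of nonrigid leaves parametrized by the angular coordinate $\theta \in \R/2\pi\Z$ — i.e.\ parametrized by a circle.

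\textbf{Key steps, in order.} First I would set up the contradiction hypothesis and invoke (III) to get the radial local model near $\gamma$, identifying the leaves through $N_\gamma\setminus\gamma$ with the ``half-plane'' leaves $\{\theta = \mathrm{const}\}$ in tubular polar coordinates, each nonrigid. Second, I would extend this circle's worth of nonrigid leaves to a maximal $1$-parameter family, exactly as in the proof of Lemma~\ref{lemma_foliation_away_rigid_leaves}: each nonrigid leaf sits in a local $1$-parameter family of nonrigid leaves (\cite[Definition~2.7]{CDR}), so one can propagate and glue these local families. Third — and this is the crux — I would argue that because the family restricted to $N_\gamma \setminus \gamma$ is already parametrized by a \emph{circle} (the leaves return to themselves as $\theta$ winds once around $\gamma$), the maximal family must also be parametrized by a circle rather than an interval. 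Fourth, I would invoke the dichotomy already proved inside Lemma~\ref{lemma_foliation_away_rigid_leaves}: if a maximal family of nonrigid leaves is parametrized by a circle, then by connectedness of $M$ it is the \emph{only} family, it foliates all of $M \setminus K$, and there are no rigid leaves at all — i.e.\ $R_\F = \emptyset$. But $K_b \neq \emptyset$ forces $R_\F \neq \emptyset$ (the remark preceding Lemma~\ref{lemma_foliation_away_rigid_leaves}: the leaves separating radial sectors from hyperbolic sectors in local model (IV) are rigid). This contradiction completes the proof.

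\textbf{Main obstacle.} The delicate point is the third step: showing that absence of rigid leaves ending on $\gamma$ genuinely forces a \emph{circular} (rather than interval) parametrization of the maximal nonrigid family. One must check that as $\theta$ traverses $\R/2\pi\Z$ the corresponding leaves are genuinely distinct (so we really get a circle's worth, not an interval that happens to close up only because a leaf accumulates on itself), and that the local $1$-parameter families from \cite[Definition~2.7]{CDR} glue consistently around the loop without monodromy forcing a jump to an interval parametrization — in other words, that the leaf space near $\gamma$ is a circle and this circle is an open-and-closed subset of the (necessarily $1$-dimensional, non-Hausdorff in general) leaf space of $\F$ on $M\setminus K$. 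Here one uses that $\gamma$ being radial means the rotation number $\rho^{\ell^*}(\gamma) > 0$ by (II.a), so the linearized return dynamics on $\Sigma_\gamma$ (governed by the function $b(t,\theta)$ of~\eqref{function_b}) has no fixed direction; combined with (III) this pins down that every leaf with an end on $\gamma$ wraps around and the local-to-global gluing indeed produces a circle. Once that is secured, the reduction to Lemma~\ref{lemma_foliation_away_rigid_leaves}'s circle-case dichotomy and the contradiction with $R_\F \neq \emptyset$ are immediate.
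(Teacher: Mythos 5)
Your overall frame (argue by contradiction, use $K_b\neq\emptyset\Rightarrow R_\F\neq\emptyset$, use the radial model (III) near $\gamma$, and feed this into Lemma~\ref{lemma_foliation_away_rigid_leaves}) is the right one, but the pivotal third step is not actually proved, and as sketched it does not go through. You claim that because the local leaves in $N_\gamma\setminus\gamma$ are parametrized by $\theta\in\R/2\pi\Z$, the \emph{maximal} family of nonrigid global leaves must be parametrized by a circle. This is precisely the point at issue and your suggested ingredients do not address it: distinct rays $\theta$ need not correspond to distinct global leaves (a leaf may have several ends on $\gamma$, and by (II) the covering degree $|k|$ of a boundary component over $\gamma$ can exceed $1$), so one does not automatically get ``a circle's worth'' of distinct leaves; the map from the circle of local plaques to the global leaf space need not be injective, the leaf space may be non-Hausdorff, and the asserted absence of monodromy is exactly what would have to be shown. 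Moreover, the appeal to (II.a) and to the linearized dynamics $b(t,\theta)$ on $\Sigma_\gamma$ is a red herring: positivity of $\rho^{\ell^*}(\gamma)$ plays no role here, only the radial local model (III) does.

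The paper closes this gap much more directly, and you should too: under the contradiction hypothesis the finitely many rigid leaves have closures avoiding a whole neighborhood of $\gamma$ (by (III), any leaf entering a small tubular neighborhood has an end on $\gamma$), so one can choose a connected open set $U$ near $\gamma$ with $\overline{U}\cap Q=\emptyset$ containing a small meridian loop around $\gamma$, which is transverse to $\F$ by the radial model. Lemma~\ref{lemma_foliation_away_rigid_leaves} then says $\F|_U$ is given by the fibers of a submersion $U\to\R$, and a closed loop transverse to such a foliation is impossible (the function would be strictly monotone along it). Note that this transverse-loop argument is also the only evident way to substantiate your step 3 (if the maximal family were an interval, its parameter would give exactly such a submersion near the loop), so completing your route essentially collapses it to the paper's proof; the detour through a circle parametrization and the circle case of the dichotomy is unnecessary.
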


\begin{proof}
We have $R_\F \neq\emptyset$ since $K_b \neq\emptyset$. Hence, Lemma~\ref{lemma_foliation_away_rigid_leaves} applies.
As before, let $Q$ be the union of the closures of the rigid leaves.
Assume, by contradiction, that no rigid leaf has an end at a given $\gamma \subset K_r$.
Any neighborhood of $\gamma$ contains a compact subset $E$ with nonempty and connected interior $U$, such that $E \cap Q = \emptyset$ and $U$ contains a loop 
transverse to~$\F|_U$.
Hence, $\F|_U$ cannot be given by the fibers of the map $U \to \R$ given by Lemma~\ref{lemma_foliation_away_rigid_leaves}.
This contradiction concludes the argument.
\end{proof}

Assume $K_b \neq \emptyset$. On each connected open set $U$ whose closure is at a positive distance from the leaves in $R_\F$, the foliation $\F|_U$ is defined by a submersion to~$\R$. 
Such a submersion gives a function that is strictly increasing or strictly decreasing along segments of orbits of $X$ contained in $U$. 
In the arguments that follow we assume, without loss of generality, that such functions are strictly increasing.

\begin{lemma}
\label{lemma_times_hit_rigid}
Fix any metric on $M$. 
For every $\delta>0$ there exists $T_\delta>0$ such that the following holds: if $p \in M \setminus K$ satisfies ${\rm dist}(\phi^{[-T_\delta,T_\delta]}(p),K_b) \geq \delta$, then there exists $t_* \in [-T_\delta,T_\delta]$ such that $\phi^{t_*}(p)$ belongs to some leaf in $R_\F$.
\end{lemma}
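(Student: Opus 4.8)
The plan is to argue by compactness and contradiction. Let $Q$ denote the union of the closures of the rigid leaves; since $K_b\neq\emptyset$ we have $R_\F\neq\emptyset$, and we also know $K\subset Q$ (indeed every $\gamma\subset K_r$ has a rigid leaf ending on it by Lemma~\ref{lemma_rigid_at_K_r}, and the local model (IV) exhibits rigid leaves ending on each $\gamma\subset K_b$). Fix $\delta>0$. Suppose, for contradiction, that for every $n\in\N$ there is a point $p_n\in M\setminus K$ with ${\rm dist}(\phi^{[-n,n]}(p_n),K_b)\geq\delta$ but such that the orbit segment $\phi^{[-n,n]}(p_n)$ meets no leaf of $R_\F$, i.e. $\phi^{[-n,n]}(p_n)\cap Q\subset$ (the union of closures of rigid leaves) is empty; more precisely $\phi^{[-n,n]}(p_n)$ avoids all rigid leaves.

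First I would pass to a subsequence so that $p_n\to p_\infty$. The condition ${\rm dist}(\phi^{[-n,n]}(p_n),K_b)\geq\delta$ passes to the limit on every fixed time window, so ${\rm dist}(\phi^{[-T,T]}(p_\infty),K_b)\geq\delta$ for all $T>0$, hence the whole orbit of $p_\infty$ stays at distance $\geq\delta$ from $K_b$. The key point to extract is that the \emph{full} orbit of $p_\infty$ avoids the interiors of all rigid leaves: if $\phi^{t_0}(p_\infty)$ lay on a rigid leaf $\ell$ (an embedded open surface transverse to $X$), then nearby orbits cross $\ell$ at nearby times, so $\phi^{t_n}(p_n)\in\ell$ for some $t_n\to t_0$ and $n$ large, contradicting the choice of $p_n$ once $|t_n|\leq n$. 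Thus $\phi^\R(p_\infty)$ lies in the open set $M\setminus(K_b\cup(\text{rigid leaves}))$; shrinking slightly, the $\omega$-limit set $\omega(p_\infty)$ is a nonempty compact invariant set contained in $M\setminus K_b$ and, I claim, disjoint from $Q$. For the disjointness from $Q$: $\omega(p_\infty)$ cannot meet a rigid leaf's interior by the same transversality argument; it cannot meet $K_r$ by Lemma~\ref{lemma_rigid_at_K_r} together with a neighborhood argument (a point of $K_r$ has rigid leaves accumulating on it in a way that forces nearby long orbit segments to cross them — this is essentially the content of the proof of Lemma~\ref{lemma_rigid_at_K_r}); and the closure of a rigid leaf adds only points of $K$, already excluded.

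Now I would invoke Lemma~\ref{lemma_foliation_away_rigid_leaves}: taking a connected open neighborhood $U$ of a point $q\in\omega(p_\infty)$ with $\overline U\cap Q=\emptyset$, the foliation $\F|_U$ is the fiber foliation of a submersion $f\colon U\to\R$, which by the convention fixed before the lemma is strictly increasing along orbit segments in $U$. But $q\in\omega(p_\infty)$ means the orbit of $p_\infty$ returns to $U$ infinitely often, spending arbitrarily long total time there, and since orbits are positively transverse to the leaves (Definition~\ref{defn_BB}(I)), $f$ must strictly increase on each visit; a recurrence argument then yields the contradiction. The cleanest way: pick two returns $\phi^{s}(p_\infty),\phi^{s'}(p_\infty)\in U$ with $s<s'$ and $\phi^{s}(p_\infty),\phi^{s'}(p_\infty)$ both close to $q$, so $f(\phi^{s'}(p_\infty))<f(\phi^{s}(p_\infty))+\varepsilon$ for the small $\varepsilon$ measuring how close they are in $U$; but strict monotonicity along the orbit forces $f(\phi^{s'}(p_\infty))>f(\phi^{s}(p_\infty))$, and iterating over infinitely many returns makes $f$ increase without bound along a bounded region — contradiction. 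This rules out the assumed sequence $p_n$, so the desired $T_\delta$ exists.

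The main obstacle I anticipate is the careful bookkeeping in the step showing $\omega(p_\infty)$ is disjoint from all of $Q$, in particular from $K_r$: one must translate "no rigid leaf ends on $\gamma\in K_r$ without forcing transverse loops nearby" (the mechanism behind Lemma~\ref{lemma_rigid_at_K_r}) into the statement that long orbit segments near $K_r$ necessarily cross rigid leaves, so that staying $\delta$-away from $K_b$ but avoiding rigid leaves also keeps the orbit uniformly away from $K_r$. A clean route is to instead prove directly, by a separate compactness argument on the compact set $\{{\rm dist}(\cdot,K_b)\geq\delta\}$, that there is a uniform time $T_\delta$ after which any orbit segment in this set must hit a rigid leaf — phrasing everything in terms of the monotone function supplied by Lemma~\ref{lemma_foliation_away_rigid_leaves} on a finite cover of the "away from $Q$" region, and handling the region near $K_r$ using property (III) of Definition~\ref{defn_BB} (radial foliation near $K_r$ forces orbits spiralling around $\gamma$ to cross the rigid leaves ending on $\gamma$). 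I expect this to be the place where most of the real work lies.
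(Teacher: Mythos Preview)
Your approach is the paper's: argue by contradiction, extract a limit orbit $\phi^{\R}(p_\infty)$ that stays $\delta$-away from $K_b$ and never meets a rigid leaf, show this orbit in fact stays uniformly away from the compact set $E=K\cup\bigcup_{\ell\in R_\F}\overline\ell$, and then derive a contradiction from Lemma~\ref{lemma_foliation_away_rigid_leaves} via a monotone function. Two points deserve attention; the second is a genuine gap.

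For the step ruling out accumulation on $K_r$, the relevant input is not (III) of Definition~\ref{defn_BB} (the radial structure of the \emph{foliation}) but (II.a) (the positive rotation number $\rho^{\ell^*}(\gamma)>0$). Property (III) alone says nothing about how \emph{orbits} behave near $\gamma\subset K_r$; it is (II.a) that forces every trajectory starting in a small neighborhood $W$ of $\gamma$ to wind around and hit every leaf with an end on $\gamma$ within some uniform time $L>0$. Combined with Lemma~\ref{lemma_rigid_at_K_r} this gives: if $\phi^{s_m}(p_\infty)\to\gamma$ then $\phi^{[s_m,s_m+L]}(p_n)$ meets a rigid leaf for $n$ large, contradicting the choice of $p_n$. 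You correctly flagged this step as the crux; the paper handles it exactly this way.

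The gap is in your recurrence step. You take $U$ to be a neighborhood of a \emph{single} point $q\in\omega(p_\infty)$ and then claim that strict monotonicity along the orbit yields $f(\phi^{s'}(p_\infty))>f(\phi^{s}(p_\infty))$ for return times $s<s'$. But $f$ is only defined on $U$, and the orbit of $p_\infty$ will generally leave $U$ between times $s$ and $s'$; once it leaves you lose all control of $f$, and there is no reason the value upon return should exceed the value upon the previous visit. (Your ``increase without bound'' conclusion also cannot hold: the return values converge to $f(q)$.) The paper repairs this by taking $U$ to be a connected open set containing the \emph{entire} compact connected set $\omega(p_\infty)$, still with $\overline U\cap E=\emptyset$. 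One then chooses a recurrent point $q\in\omega(p_\infty)$; its full orbit lies in the invariant set $\omega(p_\infty)\subset U$, so $f$ is defined and strictly increasing along all of $\phi^{\R}(q)$, which is incompatible with recurrence.
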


\begin{proof}
We give a proof by contradiction. Assume  that there exists  some $\delta>0$ such that the conclusion of the lemma is false. 
There is a sequence of points $p_n\in M \setminus K$ and a sequence $t_n\in \R$ tending to infinity as $n\to \infty$, such that ${\rm dist}(\phi^{[-t_n,t_n]}(p_n),K_b) \geq \delta$ and $\phi^{[-t_n,t_n]}(p_n)\cap (\cup_{\ell\in R_\F}\ell)=\emptyset$. 
Modulo passing to a subsequence,~$p_n$ converges to some point $p_\infty$ such that ${\rm dist}(\phi^{\R}(p_\infty),K_b) \geq \delta$.
Consider the compact set $E = K \cup (\cup_{\ell\in R_\F}\ell)$.

We claim that ${\rm dist}(\phi^{\R}(p_\infty),E)>0$. 
First suppose, by contradiction, that $\phi^{\R}(p_\infty)$ contains points arbitrarily close to $K_r$. 
There exist $s_m\in\R$ and $\gamma \subset K_r$ such that ${\rm dist}(\phi^{s_m}(p_\infty),\gamma) \to 0$ as $m\to+\infty$.
By (II.a) in Definition~\ref{defn_BB}, there is an open neighborhood $W$ of $\gamma$ and some $L>0$ such that $\phi^{[0,L]}(q)$ intersects every leaf with an end at $\gamma$, for every $q\in W \setminus \gamma$.
Fix $m$ such that $\phi^{s_m}(p_\infty) \in W$.
There exists $n_0$ such that $n\geq n_0 \Rightarrow \phi^{s_m}(p_n) \in W\setminus\gamma$.
Hence, by Lemma~\ref{lemma_rigid_at_K_r}, $\phi^{[0,s_m+L]}(p_n)$ intersects some rigid leaf provided $n\geq n_0$. 
This is in contradiction to the existence of $n_1\geq n_0$ such that $t_{n_1}>s_m+L$.
We showed that ${\rm dist}(\phi^{\R}(p_\infty),K_r)>0$, from where it follows that ${\rm dist}(\phi^{\R}(p_\infty),K)>0$.
Again by contradiction, we assume that $\phi^{\R}(p_\infty)$ contains points arbitrarily close to $\cup_{\ell\in R_\F}\ell$. 
Using the transversality of the flow with the leaves and the fact that $\phi^{\R}(p_\infty)$ is at a positive distance to $K$, we find an intersection between $\phi^{\R}(p_\infty)$ and some rigid leaf. 
As before, this implies that $\phi^{[-t_n,t_n]}(p_n)$ intersects some rigid leaf, an absurd.
We are done with the proof of ${\rm dist}(\phi^{\R}(p_\infty),E)>0$.

It follows that the $\omega$-limit set of $p_\infty$ satisfies ${\rm dist}(\omega(p_\infty),E)>0$, hence $\omega(p_\infty)$ is a compact subset of some connected open set $U$ compactly contained in $M\setminus E$.
 With the help of~\cite[Corollary~3.3.7]{HK_book} we can choose a recurrent point $q \in \omega(p_\infty)$.
By Lemma~\ref{lemma_foliation_away_rigid_leaves} there is a continuous function $f:U\to \R$ strictly increasing along the segments of orbits in $U$, in particular along the orbit of~$q$. 
This leads to a contradiction: a continuous function cannot be strictly increasing along a recurrent orbit. 
\end{proof}

\subsection{Invariant measures and intersection numbers}
\label{ssec_invt_measures_int_numbers}

As in the previous section, we consider a link $L\subset M$ tangent to $X$, with the orientation induced by~$X$. We use the notation and the constructions from Section~\ref{ssec_blowing_up}.

Denote by $\P_\phi(M\setminus L)$ the set of $\phi^t$-invariant Borel probability measures on $M\setminus L$, and by $\P_\phi(D_L)$ the set of $\phi^t_L$-invariant Borel probability measures on $D_L$.
If $\mu \in \P_\phi(M\setminus L)$ and $y \in H^1(M\setminus L;\R)$ then we may choose a closed $1$-form $\beta$ on $M\setminus L$ representing $y$ such that $\iota_X\beta$ is bounded, and define
\begin{equation}
\mu \cdot y = \int_{M\setminus L} \iota_X\beta \ d\mu
\end{equation}
called the intersection number of $\mu$ and $y$.
The existence of $\beta$ as above can be seen with the help of polar tubular coordinates, and is implicitly contained in Section~\ref{ssec_blowing_up}, see~\cite[Section~2.1]{Hry_SFS} for more details.
The independence on the choice of $\beta$ can be seen by showing that there exists $f_{\mu,y} \in L^1(\mu)$ and a Borel set $E \subset M\setminus L$ such that:
\begin{itemize}
\item $\mu(E) = 1$ and all points in $E$ are recurrent.
\item If $p \in E$ and $V \subset M\setminus L$ is an open contractible neighborhood of $p$, then $T_n^{-1} \left< y,k(T_n,p) \right> \to f_{\mu,y}(p)$ as $n\to \infty$, where $T_n \to +\infty$ is any sequence such that $\phi^{T_n}(p) \to p$, and $k(T_n,p)$ are choices of loops obtained by concatenating to $\phi^{[0,T_n]}(p)$ a path from $\phi^{T_n}(p)$ to $p$ inside $V$.
\item The identity $$ \mu \cdot y = \int_{M\setminus L} f_{\mu,y} \ d\mu $$ holds.
\end{itemize}
These facts can be proved with a direct application of the ergodic theorem, for more details see~\cite[Section~2.1]{Hry_SFS}.

Similarly, for any $\mu \in \P_\phi(D_L)$ and $y \in H^1(D_L;\R)$ one defines 
\begin{equation}
\mu \cdot y = \int_{D_L} \iota_{X_L}\beta \ d\mu
\end{equation}
where $\beta$ is a closed $1$-form on $D_L$ representing $y$. Independence of $\beta$ is easier in this case since $D_L$ is compact. 

We may freely identify $H^1(D_L;\R) \simeq H^1(M\setminus L;\R)$ via pull-back by the inclusion map $M \setminus L = D_L \setminus \partial D_L \hookrightarrow D_L$, as in the statement below.

\begin{lemma}[\cite{Hry_SFS}, Lemma~3.6]
\label{lemma_rot_numbers_measures}
If $\gamma$ is a component of $L$, $\mu \in \P_\phi(D_L)$ satisfies $\supp(\mu) \subset \Sigma_\gamma$, and $y$ is a class in $H^1(D_L;\R)$, then $\mu \cdot y = \frac{2\pi}{T_\gamma} \rho^y(\gamma)$, where $T_\gamma>0$ is the primitive period of $\gamma$.
\end{lemma}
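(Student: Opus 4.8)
The plan is to reduce the whole computation to the boundary torus $\Sigma_\gamma$, on which, by~\eqref{function_b}, the restriction of $X_L$ is $\partial_t+b(t,\theta)\partial_\theta$ --- a suspension of a circle diffeomorphism whose rotation number is precisely the quantity entering the definition~\eqref{def_formula_rot_number} of $\rho^y(\gamma)$. First I would note that, since $X_L$ is tangent to $\partial D_L$ and $\supp(\mu)\subset\Sigma_\gamma$, the number $\mu\cdot y$ is already determined by the data on $\Sigma_\gamma$: choosing a closed $1$-form $\beta$ on $D_L$ representing $y$, we have $\mu\cdot y=\int_{\Sigma_\gamma}\iota_{X_L}\beta\,d\mu$, and only $\beta|_{\Sigma_\gamma}$ together with $X_L|_{\Sigma_\gamma}$ enter. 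In the coordinates $(t,\theta)$ on $\Sigma_\gamma$ from~\eqref{tori_new}, the collar $\R/T_\gamma\Z\times[0,1]\times\R/2\pi\Z\subset D_L$ deformation retracts onto $\Sigma_\gamma$, and deleting $\Sigma_\gamma$ from it produces the neighborhood $N_\gamma\setminus\gamma$ used in Section~\ref{ssec_rot_numbers}; hence the image of $y$ in $H^1(\Sigma_\gamma;\R)$ is $p\,[dt]+q\,[d\theta]$ with exactly the constants $p,q$ appearing in~\eqref{def_formula_rot_number}. Thus $\beta|_{\Sigma_\gamma}=p\,dt+q\,d\theta+dg$ for some $g\in C^\infty(\Sigma_\gamma)$, so $\iota_{X_L}\beta|_{\Sigma_\gamma}=p+q\,b(t,\theta)+\iota_{X_L}dg$, and
\[
\mu\cdot y=\int_{\Sigma_\gamma}\bigl(p+q\,b(t,\theta)+\iota_{X_L}dg\bigr)\,d\mu=p+q\int_{\Sigma_\gamma}b\,d\mu,
\]
since $\mu$ is a probability measure and $\int_{\Sigma_\gamma}\iota_{X_L}dg\,d\mu=0$ by $\phi^t_L$-invariance of $\mu$ (the integral of the derivative of $g$ along the flow against an invariant measure vanishes).

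It then remains to identify $\int_{\Sigma_\gamma}b\,d\mu$ with $c:=\lim_{t\to+\infty}\theta(t)/t$ for any solution of $\dot\theta=b(t,\theta)$, i.e.\ the number in~\eqref{def_formula_rot_number}. For $x\in\Sigma_\gamma$ let $\theta(\cdot\,;x)$ denote the $\theta$-coordinate along $\phi^s_L(x)$, lifted to $\R$ so that its increment is well defined; by the form of $X_L|_{\Sigma_\gamma}$ one has $\theta(t;x)-\theta(0;x)=\int_0^t b(\phi^s_L(x))\,ds$. Integrating over $x$ with respect to $\mu$ and using Fubini together with $\phi^s_L$-invariance of $\mu$ gives
\[
\int_{\Sigma_\gamma}\bigl(\theta(t;x)-\theta(0;x)\bigr)\,d\mu(x)=\int_0^t\!\!\int_{\Sigma_\gamma}b\circ\phi^s_L\,d\mu\,ds=t\int_{\Sigma_\gamma}b\,d\mu .
\]
Dividing by $t$ and letting $t\to+\infty$: the integrand $\tfrac1t\bigl(\theta(t;x)-\theta(0;x)\bigr)$ is bounded by $\|b\|_\infty$ uniformly in $x$, and by Poincar\'e's rotation number theory applied to the orientation-preserving circle diffeomorphism obtained as the time-$T_\gamma$ return map of the flow of $\partial_t+b\,\partial_\theta$ to a circle $\{t=\mathrm{const}\}\subset\Sigma_\gamma$, it converges pointwise to the constant $c$, which does not depend on $x$; dominated convergence then yields $\int_{\Sigma_\gamma}b\,d\mu=c$. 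Combining with the first step gives $\mu\cdot y=p+qc=\tfrac{2\pi}{T_\gamma}\rho^y(\gamma)$.

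I expect the only genuinely delicate points to be (i) the bookkeeping that matches the pair $(p,q)$ and the number $c$ across the two coordinate pictures --- $N_\gamma\setminus\gamma$ as in Section~\ref{ssec_rot_numbers} versus $\Sigma_\gamma\subset D_L$ as used here --- which the collar retraction resolves, and (ii) the appeal to Poincar\'e's theory guaranteeing that $c$ is well defined independently of the orbit; beyond these, the argument uses only invariance of $\mu$ and Fubini. Alternatively, the statement is exactly~\cite[Lemma~3.6]{Hry_SFS}, and one may simply invoke that reference.
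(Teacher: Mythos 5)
Your proof is correct. The paper does not prove this statement itself---it simply cites \cite[Lemma~3.6]{Hry_SFS}---and your argument (restrict everything to $\Sigma_\gamma$, write $\beta|_{\Sigma_\gamma}=p\,dt+q\,d\theta+dg$ with the same constants $p,q$ via the collar identification, kill the exact term by invariance of $\mu$, and identify $\int_{\Sigma_\gamma} b\,d\mu$ with the asymptotic slope $\lim_{t\to\infty}\theta(t)/t$ by the Fubini/invariance computation together with Poincar\'e rotation-number theory for the return map of $\partial_t+b\,\partial_\theta$) is essentially the proof given in that reference, with the two delicate points you flagged handled correctly.
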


\subsection{First estimates of intersection numbers}
\label{ssec_int_numbers_ergodic}

The cohomology class 
\begin{equation}
\label{cohom_class_rigid_leaves}
y_0 = \sum_{\ell \in R_\F} \ell^* \quad \in \quad H^1(M \setminus K;\R) \simeq H^1(D_K;\R),
\end{equation}
dual to the rigid leaves,  plays a key role in our arguments.

\begin{lemma}
\label{lemma_crucial_ergodic}
If $\mu \in \P_\phi(M\setminus K)$ is ergodic then $\mu \cdot y_0 > 0$.
\end{lemma}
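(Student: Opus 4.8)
The plan is to distinguish cases according to the behavior of a $\mu$-generic orbit relative to the binding $K$ and the rigid leaves. Since $\mu$ is ergodic, the ergodic theorem gives a set $E$ of full $\mu$-measure consisting of recurrent points such that, for $p \in E$ and any sequence $T_n \to +\infty$ with $\phi^{T_n}(p) \to p$, the time average $T_n^{-1}\langle y_0, k(T_n,p)\rangle$ converges to a constant $f_{\mu,y_0}(p)$ (independent of $p$ on $E$ by ergodicity), and $\mu \cdot y_0 = \int f_{\mu,y_0}\, d\mu$. So it suffices to show this common value is strictly positive. First I would pick such a recurrent point $p$ and observe that its full orbit $\phi^{\mathbb{R}}(p)$ stays in $M \setminus K$; in particular there is $\delta > 0$ with ${\rm dist}(\phi^{\mathbb{R}}(p), K_b) \geq \delta$, since $\overline{\phi^{\mathbb{R}}(p)} \subset M \setminus K_b$ is compact (the orbit closure of a recurrent point avoiding $K$ is contained in $M\setminus K$).

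The key step is to invoke Lemma~\ref{lemma_times_hit_rigid}: with this $\delta$, there is $T_\delta > 0$ so that every orbit segment of length $2T_\delta$ staying $\delta$-away from $K_b$ must hit a rigid leaf. Applying this repeatedly along $\phi^{[0,T_n]}(p)$, the orbit segment crosses rigid leaves a number of times that grows at least linearly, roughly $T_n/(2T_\delta)$ times. Because the leaves of $\F$ are positively transverse to the flow (condition (I) in Definition~\ref{defn_BB}), each crossing of a rigid leaf contributes positively to the pairing $\langle y_0, \cdot \rangle$ evaluated along the segment: concretely, using the submersion picture of Lemma~\ref{lemma_foliation_away_rigid_leaves} and the fact that $y_0 = \sum_{\ell \in R_\F} \ell^*$ is dual to the union of rigid leaves, the count $\langle y_0, k(T_n,p)\rangle$ equals (up to a bounded correction coming from closing up inside the small neighborhood $V$, which is $O(1)$ in $n$) the algebraic intersection number of $\phi^{[0,T_n]}(p)$ with $\bigcup_{\ell\in R_\F}\ell$, and all these intersections have the same (positive) sign. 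Dividing by $T_n$ and letting $n \to \infty$ yields $f_{\mu,y_0}(p) \geq 1/(2T_\delta) > 0$, hence $\mu \cdot y_0 > 0$.

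The main obstacle I anticipate is making rigorous the claim that $\langle y_0, k(T_n,p)\rangle$ is, up to a uniformly bounded error, a nonnegative count of rigid-leaf crossings. One has to be careful that the orbit segment may graze a rigid leaf tangentially or pass near $K_r$ or near the closure of the rigid leaves, where the clean submersion model of Lemma~\ref{lemma_foliation_away_rigid_leaves} fails; and one must control the contribution of the closing-up path inside $V$. The cleanest way around this is probably to work with a closed $1$-form $\beta$ representing $y_0$ chosen so that $\iota_X\beta \geq 0$ everywhere (possible since the rigid leaves are transverse to $X$ and can be realized as level sets of locally defined submersions increasing along the flow, patched with a partition of unity), and $\iota_X\beta > 0$ on a neighborhood of each rigid leaf; then $\mu \cdot y_0 = \int \iota_X\beta\, d\mu \geq 0$ automatically, and strict positivity follows because Lemma~\ref{lemma_times_hit_rigid} guarantees that the $\mu$-generic orbit spends a definite fraction of time in the region where $\iota_X\beta$ is bounded below by a positive constant. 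This reduces the whole argument to the construction of such a $\beta$ together with the time-fraction estimate, both of which are straightforward given the local models (III) and (IV) in Definition~\ref{defn_BB}.
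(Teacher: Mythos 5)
There is a genuine gap at the very first step. You claim that for a $\mu$-generic recurrent point $p$ one has ${\rm dist}(\phi^{\R}(p),K_b)\geq\delta$ for some $\delta>0$, on the grounds that ``the orbit closure of a recurrent point avoiding $K$ is contained in $M\setminus K$''. This is false: the orbit of $p$ does lie in $M\setminus K$ (since $K$ is invariant), but its closure may perfectly well accumulate on the broken binding orbits. An ergodic $\mu\in\P_\phi(M\setminus K)$ gives zero measure to $K_b$, yet its generic orbits can pass arbitrarily close to $K_b$ and spend longer and longer stretches of time near these hyperbolic orbits (think of an ergodic measure on a horseshoe whose closure contains a component of $K_b$). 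Consequently you cannot choose one $\delta$ valid for the whole orbit, and Lemma~\ref{lemma_times_hit_rigid} cannot be applied along all of $\phi^{[0,T_n]}(p)$; the resulting ``one rigid-leaf crossing per time $2T_\delta$'' count, which is the engine of your lower bound $f_{\mu,y_0}(p)\geq 1/(2T_\delta)$, collapses. Handling exactly these close approaches to $K_b$ is the real content of the paper's proof: it builds an isolating neighborhood $N_b$ of $K_b$ using hyperbolicity and the sector structure (IV) of Definition~\ref{defn_BB}, so that every maximal passage of the orbit through $N_b$ still forces at least one crossing of a rigid leaf, uses the ergodic theorem to ensure the generic orbit spends less than half of its time in $N_b$ (via $\mu(U\setminus K_b)<\tfrac12$), and then runs a two-case analysis on whether the number of visits to $N_b$ grows linearly or sublinearly, applying Lemma~\ref{lemma_times_hit_rigid} only to the excursions outside $N_b$.

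Your fallback suggestion (choose a representative $\beta$ of $y_0$ with $\iota_X\beta\geq 0$ everywhere and $\iota_X\beta>0$ near the rigid leaves, then argue the generic orbit spends a definite fraction of time there) does not repair this, because the ``definite time fraction near the rigid leaves'' is precisely the statement that needs proof, and the obstruction is the same: orbits may spend most of their time in long passages near $K_b$, and near the binding the rigid leaves are non-compact, so a uniform lower bound on the time spent per crossing of a tubular neighborhood is not automatic. Any correct argument has to quantify the time spent near $K_b$ and convert passages near $K_b$ into rigid-leaf crossings, which is what the paper's property $(*)$ and case analysis accomplish; asserting it as ``straightforward from the local models'' leaves the essential difficulty unaddressed.
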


\begin{proof}
Any $\gamma \subset K_b$ is a hyperbolic orbit, hence isolated as an invariant set. Moreover, by hyperbolicity and condition (IV), we can find an isolating compact neighborhood $N_\gamma$ of $\gamma$ inside any neighborhood of $\gamma$ fixed \textit{a priori}, and some $\eta>0$ depending on~$N_\gamma$, such that the following holds: If $p \in M \setminus (K\cup W^u(\gamma) \cup W^s(\gamma))$ and if $\omega \neq \emptyset$ is a connected component of $\{t\in\R\mid \phi^t(p) \in N_\gamma \}$, then $\omega = [a,b]$ is a compact interval with non-empty interior, $\phi^{[a-\eta,a)\cup(b,b+\eta]}(p) \cap N_\gamma = \emptyset$, and $\exists t_* \in \omega$ such that $\phi^{t_*}(p)$ belongs to some leaf in $R_\F$.

Denote $W^u(K_b) = \bigcup_{\gamma\subset K_b} W^u(\gamma)$ and $W^s(K_b) = \bigcup_{\gamma\subset K_b} W^s(\gamma)$. 
Taking unions of various small $N_\gamma$ as above, we find an isolating compact neighborhood $N_b$ for~$K_b$ inside any neighborhood of $K_b$ fixed \textit{a priori}, and some $\eta > 0$ depending on $N_b$, with the following property:
\begin{itemize}
\item[($*$)] If $p \in M\setminus ( K \cup W^u(K_b) \cup W^s(K_b) )$ and $\omega$ is a non-empty connected component of $\{t\in\R\mid \phi^t(p) \in N_b\}$, then $\omega = [a,b]$ is a compact interval with non-empty interior, $\phi^{[a-\eta,a)\cup(b,b+\eta]}(p) \cap N_b = \emptyset$, and $\exists t_* \in \omega$ such that $\phi^{t_*}(p) \in \bigcup_{\ell \in R_\F} \ell$.
\end{itemize}

Fix $\varphi:M \to [0,1]$ continuous such that $\varphi$ is identically equal to $1$ near $K_b$, and is supported on a neighborhood $U$ of $K_b$ satisfying $\mu(U \setminus K_b) < \frac{1}{2}$. 
We can proceed assuming, without loss of generality, that $N_b \subset \varphi^{-1}(1)$.
Since $\mu$ is ergodic, the following hold simultaneously for $\mu$-almost all points $p \in M \setminus K$:
\begin{equation}
\label{point_p_erg}
\begin{aligned}
& \limsup_{T\to+\infty} \frac{1}{T} \Leb(\{t\in[0,T] \mid \phi^t(p) \in N_b\}) \\ & \qquad \leq \lim_{T\to+\infty} \frac{1}{T} \int_0^T \varphi(\phi^t(p)) \ dt = \int_{M\setminus K} \varphi \ d\mu < \frac{1}{2} \, ,
\end{aligned}
\end{equation}
\begin{equation}
\label{hitting_times_intersection_number}
\mu \cdot y_0 = \lim_{T\to+\infty} \frac{1}{T} \# \{ t\in [0,T] \mid \phi^t(p) \in \ell \ \text{for some} \ \ell\in R_\F \} \, .
\end{equation}
From now on we fix such~$p$, and denote
\begin{equation*}
J = \{ t\in \R \mid \phi^t(p) \in N_b \} \, .
\end{equation*}
If we fix an auxiliary metric on~$M$, and choose $\delta>0$ such that ${\rm dist}(\cdot,K_b) > \delta$ on $M\setminus N_b$, then by Lemma~\ref{lemma_times_hit_rigid} we find $T_*>0$ such that if $\phi^{[c,d]}(p) \subset M \setminus N_b$ and $d-c \geq T_*$ then $\phi^{[c,d]}(p)$ intersects some leaf in $R_\F$.

By ($*$), for every $T>0$ we write $J \cap [0,T]$ as a finite union of $n(T)$ maximal compact intervals, which are all at least $\eta$ apart from each other. 
We list these intervals as $h_1,\dots,h_{n(T)}$ in an increasing fashion: $\sup h_j \leq \inf h_{j+1}$.
It follows that $\inf h_{j+1} - \sup h_j \geq \eta$.
Note that only $h_1$ and $h_{n(T)}$ may not be connected components of $J$.

Consequently, we can write $[0,T] \setminus J$ as a finite union of $m(T)$ maximal intervals which are open relatively to  $[0,T]$, with $|n(T)-m(T)| \leq 1$, all of which have length at least equal to $\eta$.
Denote by $\omega^+_1,\dots,\omega^+_{m_+(T)}$ those intervals with length~$\geq T_*$.
Denote by $\omega^-_1,\dots,\omega^-_{m_-(T)}$ those intervals with length $< T_*$.
We have $m(T) = m_+(T) + m_-(T)$ and $$ [0,T] \setminus J = \omega^+_1 \cup \dots \cup \omega^+_{m_+(T)} \cup \omega^-_1 \cup \dots \cup \omega^-_{m_-(T)} \, . $$

Denote $H(T) = \# \{ t\in [0,T] \mid \phi^t(p) \in \cup_{\ell\in R_\F} \ell \}$, i.e. $H(T)$ is the number of hitting times between $\phi^{[0,+\infty)}(p)$ and the union of the union of leaves in $R_\F$ up to time $T$.
By~\eqref{hitting_times_intersection_number} we have $T^{-1}H(T) \to \mu \cdot y_0$ as $T\to+\infty$.
Our constructions so far imply that there is at least one hitting time on each $\omega^+_1,\dots,\omega^+_{m_+(T)}$ and, by ($*$), at least one hitting time on each $h_2,\dots,h_{n(T)-1}$. In particular, $H(T) \geq n(T)-2$.
We split the remaining arguments in two cases.

\medskip

\noindent \textit{Case 1.} $\liminf_{T\to+\infty} T^{-1}n(T) > 0$

\medskip 

In this case we are done since $$ \mu \cdot y_0 = \lim_{T\to+\infty} \frac{H(T)}{T} \geq \liminf_{T\to+\infty} \frac{n(T)}{T} > 0 \, . $$

\medskip

\noindent \textit{Case 2.} $\liminf_{T\to+\infty} T^{-1}n(T) = 0$

\medskip 

In this case we can find $T_j \to +\infty$  such that $T_j^{-1}n(T_j) \to 0$ as $j\to+\infty$. Note that
\begin{equation}
\label{unity_identity_erg}
\begin{aligned}
1 
&= \frac{\Leb(J \cap [0,T_j])}{T_j} + \frac{\Leb([0,T_j]\setminus J)}{T_j} \\
&= \frac{\Leb(J \cap [0,T_j])}{T_j} + \frac{1}{T_j} \sum_{s=1}^{m_+(T_j)} \Leb (\omega^+_s) + \frac{1}{T_j} \sum_{s=1}^{m_-(T_j)} \Leb (\omega^-_s) \, .
\end{aligned}
\end{equation}
Moreover, if $j\gg1$ then $T_j^{-1}\Leb(J \cap [0,T_j]) < \frac{1}{2}$ by~\eqref{point_p_erg}, and $$ \frac{1}{T_j} \sum_{s=1}^{m_-(T_j)} \Leb (\omega^-_s) \leq T_*\frac{m_-(T_j)}{T_j} \leq T_* \frac{m(T_j)}{T_j} \leq T_* \frac{n(T_j)+1}{T_j} \to 0 \, . $$ 
Plugging in~\eqref{unity_identity_erg} we get 
\begin{equation}
j\gg 1 \qquad \Rightarrow \qquad \frac{1}{T_j} \sum_{s=1}^{m_+(T_j)} \Leb (\omega^+_s) \geq \frac{1}{2} \, .
\end{equation}
Consider $\lambda_j$ the maximal number of intervals of length $T_*$ that fit inside the union of the $\omega^+_s$.
Note that $$ \left| T_*\lambda_j - \sum_{s=1}^{m_+(T_j)} \Leb (\omega^+_s) \right| \leq T_*m_+(T_j) \, . $$
Hence for $j$ large enough we compute
\begin{equation*}
\frac{1}{2} \leq \frac{1}{T_j} \sum_{s=1}^{m_+(T_j)} \Leb (\omega^+_s) \leq \frac{T_*\lambda_j}{T_j} + \frac{T_*m_+(T_j)}{T_j} \, .
\end{equation*}
Since $\frac{m_+(T_j)}{T_j} \to 0$ we get $\liminf_{j\to\infty} \frac{\lambda_j}{T_j} \geq \frac{1}{2T_*}$. 
Hence, 
\begin{equation*}
\frac{H(T_j)}{T_j} \geq \frac{\lambda_j}{T_j} \geq \frac{1}{4T_*}
\end{equation*}
for all $j$ large enough.
Inequality $\mu \cdot y_0 \geq \frac{1}{4T_*} > 0$ follows.
\end{proof}

\subsection{Further estimates of intersection numbers}

Let the broken book decomposition $(K=K_r \sqcup K_b,\F)$, the link of periodic orbits $K' \subset M \setminus K_b$ and the cohomology class $y' \in H^1(M\setminus K';\R)$ be as in the statement of Theorem~\ref{thm_existence_Birkhoff_sections}. 
As in Section~\ref{ssec_blowing_up}, we may blow $K\cup K'$ up to obtain a new manifold $M_{K\cup K'}$ with a smooth compact domain $D_{K\cup K'}$ such that $D_{K\cup K'} \setminus \partial D_{K\cup K'} = M\setminus (K\cup K')$. 
There is a boundary torus $\Sigma_\gamma$~\eqref{tori_new} for each orbit $\gamma \subset  K\cup K'$ and the vector field $X$ extends smoothly from $M\setminus (K\cup K')$ to $D_{K\cup K'}$.
As agreed before, we may freely identify $H^1(M\setminus (K\cup K');\R)$ with $H^1(D_{K\cup K'};\R)$.

We pull $y'$ back to a cohomology class 
\begin{equation}
\label{class_y''}
y'' \in H^1(M\setminus (K\cup K');\R) \simeq H^1(D_{K\cup K'};\R)
\end{equation}
via the inclusion $M\setminus (K\cup K') \hookrightarrow M \setminus K'$. 
Similarly, we pull the class $y_0$ defined in~\eqref{cohom_class_rigid_leaves} back to a cohomology class 
\begin{equation}
\label{class_y''_0}
y''_0 \in H^1(M\setminus (K\cup K');\R) \simeq H^1(D_{K\cup K'};\R)
\end{equation}
via the inclusion $M\setminus (K\cup K') \hookrightarrow M \setminus K$.
We denote
\begin{equation}
\label{broken_boundary}
\partial_bD_{K\cup K'} = \bigcup_{\gamma \subset K_b} \Sigma_\gamma \, .
\end{equation}

\begin{lemma}
\label{lemma_secondary_ergodic}
If $\mu \in \P_\phi(D_{K\cup K'})$ satisfies $\supp(\mu) \subset \partial_bD_{K\cup K'}$ then $\mu \cdot y_0'' = 0$ and $\mu \cdot y'' > 0$.
\end{lemma}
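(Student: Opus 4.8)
The plan is to localize everything near the broken binding and then apply the rotation-number formula of Lemma~\ref{lemma_rot_numbers_measures} on the blow-up $D_{K\cup K'}$. Since the flow $\phi^t_{K\cup K'}$ is tangent to $\partial D_{K\cup K'}$, the set $\partial_bD_{K\cup K'}=\bigsqcup_{\gamma\subset K_b}\Sigma_\gamma$ is a disjoint union of invariant $2$-tori, so any $\mu\in\P_\phi(D_{K\cup K'})$ supported on it can be written as a finite convex combination $\mu=\sum_{\gamma\subset K_b}\mu(\Sigma_\gamma)\,\mu_\gamma$, with each $\mu_\gamma$ an invariant probability measure supported on $\Sigma_\gamma$. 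As $\nu\mapsto\nu\cdot y$ is linear, it suffices to show $\mu_\gamma\cdot y_0''=0$ and $\mu_\gamma\cdot y''>0$ for every $\gamma\subset K_b$: the first identity gives $\mu\cdot y_0''=0$, and the second gives $\mu\cdot y''>0$ since $\mu$ is a probability measure supported on $\partial_bD_{K\cup K'}$, hence $\mu(\Sigma_\gamma)>0$ for at least one $\gamma$. By Lemma~\ref{lemma_rot_numbers_measures}, $\mu_\gamma\cdot y=\frac{2\pi}{T_\gamma}\rho^y(\gamma)$ for $y\in H^1(D_{K\cup K'};\R)$, so everything reduces to computing $\rho^{y_0''}(\gamma)$ and $\rho^{y''}(\gamma)$.

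Fix $\gamma\subset K_b$ and a small tubular neighborhood $N_\gamma$ of $\gamma$ disjoint from $K'\cup(K\setminus\gamma)$; this is possible since $K'\subset M\setminus K_b$, and we may further shrink $N_\gamma$ so that it is also disjoint from the finitely many compact surfaces $\iota(S)$ associated by (II) of Definition~\ref{defn_BB} to the rigid leaves whose boundary does not contain $\gamma$. Under $N_\gamma\setminus\gamma\hookrightarrow M\setminus(K\cup K')$ composed with the inclusions into $M\setminus K$ and $M\setminus K'$, the classes $y_0''$ and $y''$ restrict respectively to $y_0=\sum_{\ell\in R_\F}\ell^*$ and to $y'$, so $\rho^{y_0''}(\gamma)=\rho^{y_0}(\gamma)$ and $\rho^{y''}(\gamma)=\rho^{y'}(\gamma)$, these rotation numbers being unchanged under the pullbacks. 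From~\eqref{def_formula_rot_number} the map $y\mapsto\rho^y(\gamma)$ is linear on $H^1(N_\gamma\setminus\gamma;\R)$, so $\rho^{y_0}(\gamma)=\sum_{\ell\in R_\F}\rho^{\ell^*}(\gamma)$. For a rigid leaf $\ell$ with $\gamma$ on its boundary the term $\rho^{\ell^*}(\gamma)$ vanishes by (II.b) of Definition~\ref{defn_BB}; for the remaining rigid leaves, $\bar\ell\subset\iota(S)$ is compact and disjoint from $\gamma$, hence from $N_\gamma$, so $\ell^*$ restricts to $0$ on $N_\gamma\setminus\gamma$ and that term vanishes as well. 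Therefore $\rho^{y_0''}(\gamma)=0$ and $\mu_\gamma\cdot y_0''=0$.

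For $y''$ the key observation is that a meridian $m_\gamma$ of $\gamma$ bounds a disk inside $N_\gamma\subset M\setminus K'$, so $[m_\gamma]=0$ in $H_1(M\setminus K';\R)$ and $y'$ pairs trivially with $[m_\gamma]$. Writing $y'|_{N_\gamma\setminus\gamma}\sim p\,dt+q\,d\theta$, this forces $q=0$, while pairing with the longitude gives $p\,T_\gamma=\langle y',\gamma\rangle$. Plugging into~\eqref{def_formula_rot_number} (the term $\lim_{t\to+\infty}\theta(t)/t$ being irrelevant now) gives $\rho^{y''}(\gamma)=\rho^{y'}(\gamma)=\frac{T_\gamma}{2\pi}p=\frac{1}{2\pi}\langle y',\gamma\rangle$, whence $\mu_\gamma\cdot y''=\frac{2\pi}{T_\gamma}\rho^{y''}(\gamma)=\frac{\langle y',\gamma\rangle}{T_\gamma}>0$ by the hypothesis $\langle y',\gamma\rangle>0$ of Theorem~\ref{thm_existence_Birkhoff_sections}. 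Summing over the components of $K_b$ appearing in the decomposition of $\mu$ completes the proof.

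The only genuinely delicate point is the bookkeeping in the middle paragraph: identifying the restriction of $y_0''$ near $\gamma$ with $\sum_{\ell\in R_\F}\ell^*$, verifying that rigid leaves disjoint from a suitable $N_\gamma$ contribute nothing to the rotation number, and keeping straight the several pullback identifications among $M\setminus K$, $M\setminus K'$, $M\setminus(K\cup K')$ and $D_{K\cup K'}$. Everything else is a direct application of Lemma~\ref{lemma_rot_numbers_measures} together with the fact that $K'$ avoids $K_b$, which is exactly what kills the $d\theta$-component of $y'$ near $\gamma$.
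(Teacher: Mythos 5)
Your proof is correct and follows essentially the same route as the paper: localize near each torus $\Sigma_\gamma$ with $\gamma\subset K_b$, use $K_b\cap K'=\emptyset$ to show $y''\equiv p\,dt$ with $p=\langle y',\gamma\rangle/T_\gamma>0$, use (II.b) and the triviality of $\ell^*$ near $\gamma$ for rigid leaves not bounding $\gamma$ to get $\rho^{y_0''}(\gamma)=0$, and conclude via Lemma~\ref{lemma_rot_numbers_measures}. Your explicit convex decomposition of $\mu$ over the invariant tori is a small clarification of a step the paper leaves implicit, not a different argument.
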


\begin{proof}
In the construction of $D_{K \cup K'}$ explained in Section~\ref{ssec_blowing_up} we fix a diffeomorphism $\Psi_\gamma$ as in~\eqref{tubular_map} from a small tubular neighborhood $N_\gamma$ of $\gamma \subset K \cup K'$ onto $\R/T_\gamma\Z \times \D$, with induced tubular polar coordinates $(t,r,\theta) \in \R/T_\gamma\Z\times [0,1]\times \R/2\pi\Z$. 
These coordinates model a smooth neighborhood in $D_{K \cup K'}$ of the torus component $\Sigma_\gamma = \{r=0\} \subset \partial D_{K \cup K'}$. 
If $\gamma \subset K_b$ and $0 < \epsilon \ll 1$ then, since $K_b \cap K' = \emptyset$, the loop $$ t\in \R/T_\gamma\Z \mapsto (t,\epsilon,0) \in D_{K \cup K'} \setminus \partial D_{K \cup K'} = M \setminus (K \cup K') $$ is homologous to $\gamma$ in $M \setminus K'$, and the loop $\theta\in \R/2\pi\Z \mapsto (0,\epsilon,\theta)$ is homologous to zero in $M \setminus K'$.
Hence, if we write $y'' \equiv pdt+qd\theta$ in $N_\gamma \setminus \gamma$, then $$ p = \frac{\left< y'', [t \mapsto (t,\epsilon,0)] \right>}{T_\gamma} = \frac{\left< y',\gamma \right>}{T_\gamma} > 0 $$ and $$ q = \frac{\left< y'', [\theta \mapsto (0,\epsilon,\theta)] \right>}{2\pi} = \frac{\left< y', [\theta \mapsto (0,\epsilon,\theta)] \right>}{2\pi} = 0 \, . $$ 
Identifying $H^1(D_{K\cup K'};\R) \simeq H^1(M\setminus (K\cup K');\R)$, we can apply Lemma~\ref{lemma_rot_numbers_measures} to get $$ \mu \cdot y'' =  \frac{2\pi}{T_\gamma} \rho^{y''}(\gamma) = \frac{\left<y',\gamma\right>}{T_\gamma} > 0 \, . $$

We now prove that $\mu \cdot y_0'' = 0$. 
Fix $\ell \in R_\F$ and its dual $\ell^* \in H^1(M\setminus K;\R)$. Pulling $\ell^*$ back to $H^1(M\setminus (K\cup K');\R)$ via the inclusion $M\setminus (K\cup K') \hookrightarrow M\setminus K$, we get a class $y_{\ell}'' \in H^1(M\setminus (K\cup K');\R) \simeq H^1(D_{K\cup K'};\R)$. 
In $N_\gamma \setminus \gamma$, with $\gamma \subset K_b$, we have $y_{\ell}'' \equiv p dt+q d\theta \equiv \ell^*$ for constants $p,q\in\R$, as above.
If $\ell$ does not contain~$\gamma$ in its boundary then $p=q=0$, and $\mu\cdot y_{\ell}'' =0$. 
If $\ell$  contains $\gamma$ in its boundary then $\mu\cdot y_{\ell}'' = \frac{2\pi}{T_\gamma}  \rho^{\ell^*}(\gamma) = 0$ by (II.b) in Definition~\ref{defn_BB}.
Here Lemma~\ref{lemma_rot_numbers_measures} was used.
Since $y_0'' = \sum_{\ell \in R_\F} y''_\ell$ the desired conclusion follows.

\end{proof}

\subsection{Obtaining the Birkhoff section}

The space $C^0(D_{K\cup K'})$ becomes a Banach space with the sup norm. 
Its topological dual $C^0(D_{K\cup K'})'$ is a Banach space with the corresponding dual norm, and its unit ball is a compact metrizable space when equipped with the weak* topology; here it was used $C^0(D_{K\cup K'})$ is a separable Banach space.
The set $\P_\phi(D_{K\cup K'})$ can be seen as a convex subset of the unit ball of $C^0(D_{K\cup K'})'$, and the Riesz representation theorem implies that $\P_\phi(D_{K\cup K'})$ is closed in weak*. 
Hence $\P_\phi(D_{K\cup K'})$ with the weak* topology is a compact metric space.

Consider $\E \subset \P_\phi(D_{K\cup K'})$ the subset of ergodic measures.
Let $\E_b$ be the set of those $\mu \in \E$ satisfying $\mu(\partial_bD_{K\cup K'})=1$.
Here $\partial_bD_{K\cup K'}$ is the invariant set~\eqref{broken_boundary}. 
Then $\E_1 = \E \setminus \E_b$ is the set of those $\mu \in \E$ satisfying $\mu(\partial_bD_{K\cup K'})=0$.
In general, the sets $\E,\E_b,\E_1$ might be non-compact in weak*.

Every cohomology class $\alpha \in H^1(D_{K\cup K'};\R)$ defines a function $$ \mu \in \P_\phi(D_{K\cup K'}) \mapsto \mu \cdot \alpha \in \R $$ which is weak* continuous.
This is so since, by definition of intersection numbers, $\mu \cdot \alpha$ is an integral of some function in $C^0(D_{K\cup K'})$ with respect to $\mu$.
Consider the classes $y''$ and $y''_0$ defined in~\eqref{class_y''} and in~\eqref{class_y''_0}, respectively.

Let $\mu \in \E_1$. By ergodicity, either $\mu(M\setminus (K\cup K')) = 1$ or $\mu(M\setminus (K\cup K')) = 0$. 
In the former case, $\mu$ restricts to $M\setminus (K\cup K')$ as an ergodic measure for the flow $\phi^t$ on $M\setminus (K\cup K')$, and Lemma~\ref{lemma_crucial_ergodic} implies that $\mu \cdot y_0'' > 0$. 
 In the latter case, we find a component $\gamma \subset K_r \cup K'$ such that $\supp(\mu)$ is contained on the boundary torus~$\Sigma_\gamma$.
If $\gamma \subset K_r$ then Lemma~\ref{lemma_rot_numbers_measures} implies $\mu \cdot y_0'' > 0$. Here (II.a) from Definition~\ref{defn_BB} is also used. 
If $\gamma \subset K' \setminus K_r$ then we get $\mu \cdot y_0'' > 0$ from the fact that $\gamma$ is a periodic orbit in the complement of $K$, as such it has to intersect some rigid page and every intersection point is transverse and positive. 
Summarizing, we proved that
\begin{equation}
\label{pos_E_1_y''_0}
\mu \in \E_1 \qquad \Rightarrow \qquad \mu \cdot y_0'' > 0 \, .
\end{equation}

By compactness of $\P_\phi(D_{K\cup K'})$ we get $$ c = \sup_{\mu \in \P_\phi(D_{K\cup K'})} |\mu \cdot y''| < + \infty \, . $$
Consider
\begin{equation}
Z = \overline{\E_1} \cap \{ \mu \in \P_\phi(D_{K\cup K'}) \mid \mu \cdot y''_0 = 0 \} 
\end{equation}
where $\overline{\E_1}$ denotes the weak* closure.
Note that $Z$ is compact in weak* since it is a closed subset of the compact set $\P_\phi(D_{K\cup K'})$.

\begin{lemma}
\label{lemma_positivity_in_Z}
If $\mu \in Z$ then $\mu \cdot y''>0$.
\end{lemma}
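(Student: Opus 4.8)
The plan is to reduce, via the ergodic decomposition of $\mu$, to the situation already handled by Lemma~\ref{lemma_secondary_ergodic}. The starting observation I would record is that \emph{every} ergodic $\phi^t_{K\cup K'}$-invariant Borel probability measure $\nu$ on $D_{K\cup K'}$ satisfies $\nu\cdot y_0''\ge 0$, with equality forcing $\supp(\nu)\subset\partial_b D_{K\cup K'}$. Indeed, $\partial_b D_{K\cup K'}$ is a closed invariant set, so ergodicity gives $\nu(\partial_b D_{K\cup K'})\in\{0,1\}$; if it equals $0$ then $\nu\in\E_1$ and \eqref{pos_E_1_y''_0} yields $\nu\cdot y_0''>0$, whereas if it equals $1$ then $\supp(\nu)\subset\partial_b D_{K\cup K'}$ and Lemma~\ref{lemma_secondary_ergodic} gives $\nu\cdot y_0''=0$.

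Next I would take $\mu\in Z$, so that $\mu$ is $\phi^t_{K\cup K'}$-invariant and $\mu\cdot y_0''=0$, and write its ergodic decomposition $\mu=\int\nu\,d\tau(\nu)$ over the ergodic elements of $\P_\phi(D_{K\cup K'})$. Because each cohomology class $\alpha\in H^1(D_{K\cup K'};\R)$ acts on invariant measures through integration of a fixed continuous function on the compact manifold $D_{K\cup K'}$, the functional $\mu'\mapsto\mu'\cdot\alpha$ is compatible with this decomposition; hence $0=\mu\cdot y_0''=\int(\nu\cdot y_0'')\,d\tau(\nu)$. Since the integrand is nonnegative, it vanishes for $\tau$-almost every $\nu$, and by the previous step $\nu(\partial_b D_{K\cup K'})=1$ for $\tau$-almost every $\nu$. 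Integrating once more, $\mu(\partial_b D_{K\cup K'})=1$, that is, $\supp(\mu)\subset\partial_b D_{K\cup K'}$. At this point Lemma~\ref{lemma_secondary_ergodic} applies directly to $\mu$ and gives $\mu\cdot y''>0$, which is the claim.

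I do not expect a genuine obstacle here: the real work has been front-loaded into Lemma~\ref{lemma_crucial_ergodic}, Lemma~\ref{lemma_secondary_ergodic} and \eqref{pos_E_1_y''_0}. The only points to watch are that the weak* closure appearing in the definition of $Z$ is not actually used — the argument proves $\mu\cdot y''>0$ for \emph{any} invariant $\mu$ with $\mu\cdot y_0''=0$ — and that the ergodic decomposition genuinely commutes with the bounded continuous functionals $\cdot\,y_0''$ and $\cdot\,y''$, which is standard for a continuous flow on a compact metric space. The conceptual point is simply that vanishing of the $y_0''$-intersection number pins an invariant measure onto the broken boundary tori, where $y''$ has been designed to be strictly positive.
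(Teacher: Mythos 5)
Your argument is correct and follows essentially the same route as the paper: ergodic decomposition applied to $y_0''$, the positivity \eqref{pos_E_1_y''_0} on $\E_1$, and Lemma~\ref{lemma_secondary_ergodic} on the broken boundary tori. The only (harmless) variation is that, instead of applying the decomposition a second time to integrate $\nu\cdot y''$ as the paper does, you pass back to the measure level to conclude $\supp(\mu)\subset\partial_b D_{K\cup K'}$ and invoke Lemma~\ref{lemma_secondary_ergodic} for $\mu$ itself --- legitimate since that lemma is stated for arbitrary invariant measures, though it uses the ergodic decomposition at the level of Borel sets rather than only the functional identity~\eqref{erg_decomp_thm}.
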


\begin{proof}
Fix $\mu \in Z$. By the ergodic decomposition theorem~\cite[page 84]{HK_handbook} we can find a Borel probability measure $P_\mu$ on $\E$ such that
\begin{equation}
\label{erg_decomp_thm}
\mu \cdot \alpha = \int_\E \nu \cdot \alpha \ dP_\mu(\nu)
\end{equation}
holds for every $\alpha \in H^1(D_{K\cup K'};\R)$. 
Apply this formula to $y_0''$ in combination with the definition of $Z$ and with Lemma~\ref{lemma_secondary_ergodic} to get
\begin{equation*}
\begin{aligned}
0 = \mu \cdot y_0'' = \int_\E \nu \cdot y_0'' \ dP_\mu(\nu) 
&= \int_{\E_b} \nu \cdot y_0'' \ dP_\mu(\nu)  + \int_{\E_1} \nu \cdot y_0'' \ dP_\mu(\nu) \\
&= \int_{\E_1} \nu \cdot y_0'' \ dP_\mu(\nu) \, .
\end{aligned}
\end{equation*}
Now~\eqref{pos_E_1_y''_0} implies that $\nu \cdot y_0''>0$ for all $\nu \in \E_1$, and we conclude that $P_\mu(\E_1)=0$.
Hence $1=P_\mu(\E_b)$.
Substituting $\alpha = y''$ in~\eqref{erg_decomp_thm} we finally get $$ \mu \cdot y'' = \int_{\E_b} \nu \cdot y'' \ dP_\mu(\nu) > 0 $$ since by Lemma~\ref{lemma_secondary_ergodic} the integrand is pointwise strictly positive.
\end{proof}

By compactness of $Z$ and Lemma~\ref{lemma_positivity_in_Z}, we find an open neighborhood $U$ of $Z$ in $\P_\phi(D_{K\cup K'})$ such that $\mu \cdot y'' > 0$ for all $\mu \in U$. 
Moreover, since~\eqref{pos_E_1_y''_0} implies that $\mu \cdot y_0'' \geq 0$ for every $\mu \in \overline{\E_1}$, we get $\mu \cdot y_0''> 0$ for every $\mu$ in $Z' = \overline{\E_1} \setminus U$.
By the compactness of $Z'$ $$ d = \inf_{\mu\in Z'} \mu \cdot y_0'' > 0 \, . $$
Finally, choose $a>0$ large enough so that $ad > c$ and compute:
\begin{itemize}
\item If $\mu \in \E_b$ then $\mu \cdot (ay_0''+y'') = \mu \cdot y'' > 0$.
\item Assume that $\mu \in \E_1$. If $\mu \in U$ then $\mu \cdot (ay_0''+y'') = a \mu \cdot y_0''  + \mu \cdot y''$ and both $\mu \cdot y_0''$ and $\mu \cdot y''$ are strictly positive. If $\mu \in \E_1 \setminus U \subset Z'$ then $\mu \cdot (ay_0''+y'') \geq a d  -c > 0$.
\end{itemize}
This proves that the class $y_*'' = ay_0''+y'' \in H^1(D_{K\cup K'};\R)$ satisfies $\mu \cdot y_*'' > 0$ for all $\mu \in \E$. The ergodic decomposition theorem implies that $\mu \cdot y_*'' > 0$ must also hold for all $\mu \in \P_\phi(D_{K\cup K'})$. Let $y_* \in H^1(M\setminus(K\cup K');\R)$ be the class obtained by pulling $y_*''$ back via the inclusion $M\setminus(K\cup K') = D_{K\cup K'} \setminus \partial D_{K\cup K'} \hookrightarrow D_{K\cup K'}$. Then $\mu \cdot y_*>0$ holds for all $\mu \in \P_\phi(M\setminus(K\cup K'))$, and Lemma~\ref{lemma_rot_numbers_measures} implies that $\rho^{y_*}(\gamma)>0$ for all $\gamma \subset K \cup K'$. Hence we can apply Theorem~\ref{thm_existence_from_SFS} to find the desired $\partial$-strong Birkhoff section.

\section{Finding orbits that link the broken binding}
\label{sec_general_scheme}

Our goal here is to prove Proposition~\ref{prop_gen_1}.
Let $\lambda$ be a contact form on a closed, connected and oriented $3$-manifold~$M$ satisfying $\lambda \wedge d\lambda > 0$, with Reeb vector field~$X$.
We consider a sequence of weighted links of periodic Reeb orbits $(\{\gamma^n_j\},\{p^n_j\})$, $1\leq j \leq N(n)$, as in the introduction, and the associated Borel invariant probability measures $$ \mu^n = \sum_j p^n_j \frac{(\gamma^n_j)_*\Leb}{T(\gamma^n_j)} \, . $$ 
In Proposition~\ref{prop_gen_1} it is assumed that $(\{\gamma^n_j\},\{p^n_j\})$ as above can be found so that
\begin{equation}
\mu^n \to \frac{\lambda \wedge d\lambda}{{\rm vol}(\lambda)}
\end{equation}
in the weak* topology of $C^0(M)'$.
If we denote $I^n = \gamma^n_1 \cup \dots \cup \gamma^n_{N(n)}$, then there is no loss of generality to assume that
\begin{equation}
\label{links_are_increasing}
I^n \subset I^{n+1} \qquad \qquad \forall n \, .
\end{equation}
Moreover, since $L$ has measure zero with respect to $\lambda \wedge d\lambda$, there is no loss of generality to further assume that 
\begin{equation}
I^n \subset M \setminus L \qquad \qquad \forall n \, .
\end{equation}

For every open set $U \subset M$ and $p\geq0$, $\Omega^p(U)$ is equipped with the $C^\infty_{\rm loc}$-topology, and the space $C_p(U)$ of $p$-currents with compact support in $U$ is the topological dual of $\Omega^p(U)$ with the associated weak* topology.
Boundary operators $\partial_{*}:C_{*}(U)\to C_{*-1}(U)$ are defined as adjoints of exterior derivatives. 
Elements in $Z_p(U) = \ker \partial_p$ are called cycles, and the elements of $B_p(U) = {\rm im} \ \partial_{p+1} \subset Z_p(M)$ are called boundaries. 
The quotient space $Z_p(U)/B_p(U)$ is canonically isomorphic to $H_p(U;\R)$.

Let us enumerate the components of $L$ as $h_1,\dots,h_m$. 
They can also be seen as maps $h_k:\R/T(h_k)\Z \to M$ parametrized by the flow, where $T(h_k)>0$ is the primitive period of $h_k$. 
In this way, each $h_k$ defines a cycle in $C_1(M)$ by the formula $\omega \in \Omega^1(M) \mapsto \int_{\R/T(h_k)\Z}h_k^*\omega$. This cycle is also denoted by $h_k$, with no fear of ambiguity.

\begin{lemma}
\label{lemma_action_linking}
Let $x_1,\dots,x_m \in \Z$ be such that $c = \sum_{k=1}^m x_kh_k \in B_1(M)$. 
Choose an oriented rational Seifert surface $f:S\to M$ for $c$, i.e. $S$ is an oriented compact surface, $f$ is an immersion, and 
\begin{itemize}
\item $f|_{\partial S}$ defines a submersion $\partial S \to L$ covering each $h_k$ exactly $x_k$ times,
\item $f|_{S\setminus \partial S}$ defines an embedding $S\setminus \partial S \to M\setminus L$.
\end{itemize}
Then 
$$ 
\lim_{n\to\infty} \sum_j p^n_j \frac{\vol(\lambda) \ {\rm int}(\gamma^n_j,f)}{T(\gamma^n_j)} = \int_S f^*d\lambda = \sum_{k=1}^m x_kT(h_k) 
$$ 
where ${\rm int}(\gamma^n_j,f) \in \Z$ denotes the algebraic intersection number between $\gamma^n_j$ and $f$.
\end{lemma}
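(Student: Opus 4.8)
The plan is to prove the two equalities separately. The second equality, $\int_S f^*d\lambda = \sum_{k=1}^m x_k T(h_k)$, is the classical Stokes-type computation: since $d\lambda$ is exact on $S$ up to the boundary term (more precisely $f^*d\lambda = d(f^*\lambda)$), we have $\int_S f^*d\lambda = \int_{\partial S} f^*\lambda$ by Stokes' theorem. Along $\partial S$ the map $f$ covers each $h_k$ exactly $x_k$ times, respecting orientations, and each $h_k$ is a Reeb orbit, so $\lambda$ restricted to $h_k$ integrates to its primitive period $T(h_k)$ (because $\lambda(X)=1$ and $h_k$ is parametrized by the flow). Summing gives $\sum_k x_k T(h_k)$. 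This step is routine; care is only needed to match the orientation conventions so that the covering multiplicities appear with the correct signs.

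For the first equality I would use the interpretation of the Seifert surface as a current. The immersed surface $f:S\to M$ with its prescribed boundary behavior defines a $2$-current $[f]$ on $M$ whose boundary is the $1$-cycle $c=\sum_k x_k h_k$; equivalently $f_*d\lambda$ — better, the $1$-form $\iota_X(d\lambda)$ is zero, so the relevant object is a closed $1$-form $\beta$ Poincaré-dual to $f$ on $M\setminus L$ with $\iota_X\beta$ bounded, in the sense of Section~\ref{ssec_invt_measures_int_numbers}. Then for any periodic orbit $\gamma$ disjoint from $L$ the algebraic intersection number ${\rm int}(\gamma,f)$ equals $\int_{\R/T(\gamma)\Z}\gamma^*\beta$, and hence $\frac{{\rm int}(\gamma^n_j,f)}{T(\gamma^n_j)} = \frac{(\gamma^n_j)_*\Leb}{T(\gamma^n_j)}\cdot y$ where $y$ is the class of $\beta$. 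Summing with the weights $p^n_j$, the left-hand side of the first claimed limit is exactly $\vol(\lambda)\,(\mu^n\cdot y)$. The hypothesis that $\mu^n\to(\lambda\wedge d\lambda)/\vol(\lambda)$ in the weak* topology, together with the weak* continuity of $\mu\mapsto\mu\cdot y$ (which holds because $\mu\cdot y$ is the integral of the fixed bounded function $\iota_X\beta$, provided $\iota_X\beta$ is continuous — this is where the explicit construction of $\beta$ from tubular coordinates, as in Section~\ref{ssec_blowing_up}, is needed), gives
\begin{equation*}
\lim_{n\to\infty}\vol(\lambda)\,(\mu^n\cdot y) \;=\; \vol(\lambda)\,\frac{1}{\vol(\lambda)}\int_M \iota_X\beta\;\lambda\wedge d\lambda \;=\; \int_M \iota_X\beta\;\lambda\wedge d\lambda \, .
\end{equation*}
It then remains to identify $\int_M (\iota_X\beta)\,\lambda\wedge d\lambda$ with $\int_S f^*d\lambda$. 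This is the content of the Action–Linking Lemma of~\cite{BSHS}, of which the present lemma is the announced small modification: intuitively, $\lambda\wedge d\lambda = \lambda\wedge d\lambda$ pairs with the Poincaré dual $\beta$ of $S$ to recover $\int_S d\lambda$, because $\lambda$ itself is (up to $\iota_X$) a primitive adapted to the Reeb flow. Concretely one writes $\int_M (\iota_X\beta)\lambda\wedge d\lambda = \int_M \beta\wedge d\lambda$ (using $\iota_X(\lambda\wedge d\lambda)=d\lambda$ and that $\beta\wedge\lambda\wedge d\lambda$ is a top form manipulation, or directly $\iota_X\beta\cdot\lambda\wedge d\lambda = \beta\wedge\iota_X(\lambda\wedge d\lambda)$ modulo terms killed by $\lambda\wedge d\lambda$ being top-degree) and then, since $\beta$ is Poincaré dual to $f$, $\int_M\beta\wedge d\lambda = \int_S f^*d\lambda$.

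**Main obstacle.** The delicate point is the last identification, turning the integral $\int_M(\iota_X\beta)\,\lambda\wedge d\lambda$ into $\int_S f^*d\lambda$ rigorously. The $1$-form $\beta$ is only a de Rham representative away from $L$ with a controlled singularity along $L$, and $f$ is immersed rather than embedded, so one cannot naively invoke smooth Poincaré duality; one must either regularize $\beta$ near $L$ and pass to the limit, or argue at the level of currents and de Rham cohomology with the boundary carefully accounted for (the boundary contribution along $L$ being precisely $\sum_k x_k T(h_k)$, matching the Stokes computation above). Establishing this while keeping track of orientations, and confirming it matches the statement of~\cite{BSHS} in the generality of rational Seifert surfaces with multiplicities $x_k$, is the real work; everything else — the intersection-number/integral identity and the weak* convergence — is standard once the constructions of Section~\ref{ssec_blowing_up} are invoked.
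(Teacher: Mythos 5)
Your outline follows the same route as the paper's proof: Stokes for the second equality, a dual $1$-form with $\iota_X\beta$ bounded to convert the weighted intersection numbers into $\vol(\lambda)\,(\mu^n\cdot y)$, weak* convergence of $\mu^n$, and finally an Action--Linking-type identification of $\int(\iota_X\beta)\,\lambda\wedge d\lambda$ with $\int_S f^*d\lambda$. The problem is that the two steps you defer are exactly where the content of the lemma lies, so what you have is a correct skeleton with genuine gaps rather than a complete or alternative proof. The key identification cannot be quoted from~\cite{BSHS}: this lemma \emph{is} the announced modification of the Action--Linking Lemma, so that identity is precisely what must be established here. Moreover, the Poincar\'e-duality shortcut you sketch is structurally dangerous: the pointwise identity $(\iota_X\beta)\,\lambda\wedge d\lambda=\beta\wedge d\lambda$ is fine, but $d\lambda$ is exact on the closed manifold $M$, so for any closed $1$-form defined on all of $M$ one gets $\int_M\beta\wedge d\lambda=0$; the entire answer $\sum_k x_kT(h_k)$ comes from the singular behaviour of $\beta$ along $L$, so a localization or limiting argument is unavoidable. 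The paper carries this out by blowing $L$ up: on $D_L$ both $\lambda$ and $X$ extend smoothly, $f$ is replaced (after a small perturbation) by an embedded surface $S\subset D_L$ transverse to $\partial D_L$, and one takes $\beta_\delta=\varphi_\delta(z)\,dz$ supported in a collar $[-1,1]\times S$; the integral $\int_{D_L} i_X\beta_\delta\;\lambda\wedge d\lambda$ is independent of $\delta$ (using that $d\lambda$ vanishes on $T\partial D_L$, so no boundary term appears when comparing cohomologous $\beta_\delta$'s), and as $\delta\to0$ it localizes to $\int_S (i_Xdz)\,g\,\Omega$, which equals $\int_S d\lambda=\sum_k x_kT(h_k)$ via $d\lambda=i_X(\lambda\wedge d\lambda)$ and Stokes.

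The weak* step also needs more than your parenthetical ``provided $\iota_X\beta$ is continuous.'' The function $i_X\beta_\delta$ is continuous on the compact blow-up $D_L$, hence bounded on $M\setminus L$, but in general it admits no continuous extension across $L$ (its boundary values depend on the angular coordinate $\theta$), whereas the approximation hypothesis is convergence in $C^0(M)'$, i.e.\ tested only against functions continuous on all of $M$. The paper bridges this with Lemma~\ref{lemma_measures_conv}, applied to the open set $V=M\setminus L$, whose topological boundary $L$ has zero Liouville measure and which contains all the approximating orbits $\gamma^n_j$. Your Stokes computation of the second equality, $\int_S f^*d\lambda=\int_{\partial S}f^*\lambda=\sum_k x_kT(h_k)$, is correct and agrees with what the paper uses implicitly; the rest needs the blow-up computation and the measure-theoretic lemma to be an actual proof.
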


\begin{proof}
Consider the $3$-manifold $M_L$ obtained from $M$ and $L$ via the blow-up construction explained in Section~\ref{ssec_blowing_up}.
It contains a special smooth domain $D_L \subset M_L$~\eqref{domain_D_L} such that $D_L \setminus \partial D_L = M\setminus L$. 
The vector field $X$ extends smoothly from $M\setminus L$ to $D_L$ tangentially to $\partial D_L$.
The extended vector field is still denoted here by $X$, for simplicity.
It is not difficult to show that also $\lambda$ extends smoothly to $D_L$.
Unfortunately, $\lambda$ does not define a contact form on $D_L$ since $d\lambda$ vanishes on $T_y D_L$ for every $y \in \partial D_L$.
The rest of this proof is a small modification of the proof of the Action-Linking Lemma~\cite[Lemma~1.12]{BSHS}.

Consider the map $\pi : D_L \to M$ defined by collapsing each boundary torus~\eqref{tori_new} $\Sigma_{h_k}$ to $h_k$.
This map is smooth since in tubular polar coordinates $(t,r,\theta)$ near $\Sigma_{h_k} = \{r=0\}$, as in~\eqref{polar_tubular_coordinates}, the map is represented by $(t,r,\theta)\mapsto (t,re^{i\theta})$. 
After a $C^\infty$-small perturbation of $f$, we can assume that $f= \pi|_S$ for an embedded surface $S \subset D_L$ that intersects $\partial D_L$ transversely; in particular, $\partial S = S \cap \partial D_L$.

Consider a compact neighborhood $U$ of $S$ diffeomorphic to $[-1,1]\times S$ in such a way that $\{0\} \times S \simeq S$.
Denote by $z$ the coordinate on $[-1,1]$.
If $\Omega$ is a positive area form on $S$ then there is no loss of generality to assume that $\Omega \wedge dz > 0$ on~$U$.
Fix any non-negative test function $\varphi$ on $\R$ such that $\supp(\varphi) \subset [-1,1]$ and $\int_\R\varphi=1$. 
For every $\delta \in (0,1)$ set $\varphi_\delta(x) = \delta^{-1}\varphi(\delta^{-1}x)$.
Hence $\supp(\varphi_\delta) \subset [-\delta,\delta]$ and $\int_\R\varphi_\delta=1$. Note that $\varphi_\delta(z)dz$ defines a closed $1$-form supported in the interior of $U$ and, as such, it can be smoothly extended as a closed $1$-form $\beta_\delta$ on $D_L$. 
It represents the cohomology class dual to $S$.
Consider smooth functions $f,g:U \to \R$ defined by $f = i_Xdz$ and $\lambda \wedge d\lambda = g \ \Omega \wedge dz$.
Then
\begin{equation*}
\begin{aligned}
& \int_{D_L} i_X\beta_\delta \ \lambda \wedge d\lambda = \int_{U \simeq [-1,1] \times S} \varphi_\delta(z) f(z,q) g(z,q) \ \Omega \wedge dz \\
& \qquad = \int_{U \simeq [-1,1] \times S} \varphi_\delta(z) (f(0,q) g(0,q) + \epsilon(z,q)) \ \Omega \wedge dz \\
& \qquad = \int_{S} f(q,0) g(q,0) \ \Omega \ + \ \int_{U \simeq [-1,1] \times S} \varphi_\delta(z) \epsilon(z,q) \ \Omega \wedge dz \, .
\end{aligned}
\end{equation*}
One finds $c>0$ such that $\sup_{q\in S}|\epsilon(z,q)| \leq c|z|$ holds for all $(z,q) \in [-1,1] \times S \simeq U$.
This follows from the fundamental theorem of calculus.
Hence
\begin{equation*}
\begin{aligned}
& \left| \int_{D_L} i_X\beta_\delta \ \lambda \wedge d\lambda - \int_{S} f(0,q) g(0,q) \ \Omega \right| \\
& \qquad \leq c\delta \left( \int_{S} \Omega \right) \left( \int_{[-1,1]} \varphi_\delta(z) dz \right) = c\delta \int_{S} \Omega \, .
\end{aligned}
\end{equation*}
Since all $\beta_\delta$'s are cohomologous, the integral $\int_{D_L} i_X\beta_\delta \ \lambda \wedge d\lambda$ does not depend on~$\delta$.
Taking the limit as $\delta\to0$
\begin{equation}
\label{first_identity_integral_lemma}
\int_{D_L} i_X\beta_{\delta_0} \ \lambda \wedge d\lambda = \int_{S} fg \ \Omega \qquad \forall \delta_0 \in (0,1) \, .
\end{equation}
The identity $d\lambda = i_X(\lambda \wedge d\lambda)$ is valid on $D_L \setminus \partial D_L = M \setminus L$, and hence is also valid on $D_L$ with the smooth extensions of $X$, $\lambda$ and $d\lambda$.
On $U$ one computes 
$$ 
d\lambda = i_X(\lambda \wedge d\lambda) = i_X(g \ \Omega \wedge dz) = fg \ \Omega + \nu \wedge dz 
$$ 
for some $1$-form $\nu$. 
Since $dz$ vanishes tangentially to $S$ we get from~\eqref{first_identity_integral_lemma} that 
$$ 
\int_{D_L} i_X\beta_\delta \ \lambda \wedge d\lambda = \int_S d\lambda = \sum_{k=1}^m x_kT(h_k) \qquad \forall \delta \in (0,1) \, . 
$$
Finally, once $\delta \in (0,1)$ is fixed arbitrarily, the function $i_X\beta_\delta$ is bounded on $M\setminus L$ since it is continuous on the compact space $D_L \supset D_L \setminus \partial D_L = M\setminus L$. Hence
\begin{equation*}
\begin{aligned}
\int_{D_L} i_X\beta_\delta \ \lambda \wedge d\lambda &= \int_{M \setminus L} i_X\beta_\delta \ \lambda \wedge d\lambda \\
&= \vol(\lambda) \lim_{n\to\infty} \int_{M \setminus L} i_X\beta_\delta \ d\mu^n \\
&= \vol(\lambda) \lim_{n\to\infty} \sum_j p^n_j \frac{{\rm int}(\gamma^n_j,f)}{T(\gamma^n_j)} \, .
\end{aligned}
\end{equation*}
Here we used the assumption that $\mu^n \to (\lambda \wedge d\lambda)/\vol(\lambda)$ in weak* topology of $C^0(M)'$ in combination with Lemma~\ref{lemma_measures_conv}.
\end{proof}

Let $H = \span\{h_1,\dots,h_m\} \subset Z_1(M)$. 
Then $\dim H = m$ and $\{h_1,\dots,h_m\}$ is a basis of $H$.
The space $H$ contains a compact convex set
\begin{equation*}
\Cc = \left\{ \left. \sum_{k=1}^m a_kh_k \ \right| \ a_k \geq 0 \ \forall k, \quad \sum_{k=1}^m a_k=1 \right\} \, .
\end{equation*}
Choose a basis $e_1,\dots,e_R$ of $H \cap B_1(M)$. 
Hence $[e_r] = 0$ in $H_1(M;\R)$. 

The universal coefficients theorem tells us that $H_2(M,L;\R) \simeq H_2(M,L;\Q) \otimes \R$.
Hence, for every~$r$ we can find finitely many $y^s_r \in \R$ and oriented rational Seifert surfaces $f^s_r$ as in Lemma~\ref{lemma_action_linking} such that 
$$
e_r = \partial \sum_sy^s_rf^s_r
$$
where the $f^s_r$ are seen in $C_2(M)$.

\begin{lemma}
\label{lemma_asymptotic_positivity}
If $n$ is large enough then $\Cc \cap B_1(M \setminus I^n) = \emptyset$.
\end{lemma}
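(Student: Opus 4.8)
The plan is to argue by contradiction and, after two elementary finite-dimensionality reductions, derive the conclusion from a single application of the Action-Linking Lemma (Lemma~\ref{lemma_action_linking}). First I would observe that the sets $\Cc\cap B_1(M\setminus I^n)$ are nested decreasing: since $I^n\subset I^{n+1}$, a $1$-boundary in $M\setminus I^{n+1}$ is a fortiori a $1$-boundary in $M\setminus I^n$. Hence it suffices to show the intersection is empty for \emph{some} $n$, and I assume for contradiction that $\Cc\cap B_1(M\setminus I^n)\neq\emptyset$ for every $n$. The subspaces $H\cap B_1(M\setminus I^n)$ of the finite-dimensional space $H$ are likewise nested decreasing, hence stabilize to a subspace $V$; as $V$ is the kernel of the $\Q$-linear map $H\to H_1(M\setminus I^n;\R)$ for $n$ large, it is rational with respect to the basis $h_1,\dots,h_m$. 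Since $\Cc\subset H$, for $n$ large $\Cc\cap B_1(M\setminus I^n)=\Cc\cap V$, a nonempty rational polytope, so it contains a rational point $c_*=\sum_{k=1}^m a_kh_k$ with $a_k\in\Q_{\ge0}$ and $\sum_k a_k=1$. By monotonicity, $c_*\in B_1(M\setminus I^n)$ for \emph{all} $n$.

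Next I would exploit that $H_1(M\setminus L;\R)$ is finite-dimensional to choose $n_0$ so large that the homology classes $[\mu^n]=\sum_j\tfrac{p^n_j}{T(\gamma^n_j)}[\gamma^n_j]\in H_1(M\setminus L;\R)$ all lie in $\span\{[\mu^1],\dots,[\mu^{n_0}]\}$. Since $[c_*]=0$ in $H_1(M\setminus I^{n_0};\R)$, the standard construction of rational Seifert surfaces (used already in the excerpt to produce the surfaces $f^s_r$) carried out inside the open $3$-manifold $M\setminus I^{n_0}$ yields an oriented rational Seifert surface $f\colon S\to M$ for $c_*$ of the kind appearing in Lemma~\ref{lemma_action_linking} --- with $f|_{\partial S}$ a submersion onto $L$ covering $h_k$ exactly $Qa_k$ times for some $Q\in\Z_{>0}$ and $f|_{S\setminus\partial S}$ an embedding into $M\setminus L$ --- whose image is contained in $M\setminus I^{n_0}$. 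I expect this geometric realization to be the only point requiring care; it amounts to representing a bounding integral $2$-chain in $M\setminus I^{n_0}$ by a smooth map, putting it in general position, and performing the oriented resolution of its double curves, all within $M\setminus I^{n_0}$.

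Finally, let $\beta=\beta_\delta$ be the closed $1$-form on $M\setminus L$ Poincaré dual to $f$ from the proof of Lemma~\ref{lemma_action_linking}, so that $\int_\gamma\beta={\rm int}(\gamma,f)$ for periodic orbits $\gamma\subset M\setminus L$ and $\sum_j p^n_j\tfrac{{\rm int}(\gamma^n_j,f)}{T(\gamma^n_j)}=\langle[\beta],[\mu^n]\rangle$. For $1\le n\le n_0$ every component of $I^n$ lies in $I^n\subset I^{n_0}$, hence misses the image of $f$; therefore ${\rm int}(\gamma^n_j,f)=0$ for all $j$, so $\langle[\beta],[\mu^n]\rangle=0$, and by the choice of $n_0$ this gives $\langle[\beta],[\mu^n]\rangle=0$ for \emph{all} $n$. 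But Lemma~\ref{lemma_action_linking} applied to $f$, a Seifert surface for $Qc_*=\sum_k(Qa_k)h_k$, gives
\begin{equation*}
\lim_{n\to\infty}\vol(\lambda)\,\langle[\beta],[\mu^n]\rangle=\lim_{n\to\infty}\sum_j p^n_j\frac{\vol(\lambda)\,{\rm int}(\gamma^n_j,f)}{T(\gamma^n_j)}=Q\sum_{k=1}^m a_k\,T(h_k)>0,
\end{equation*}
using $a_k\ge0$, $\sum_k a_k=1$ and $T(h_k)>0$ --- contradicting the identical vanishing of the left-hand sequence. The architecture of the argument is dictated by this clash: a single Seifert surface avoiding $I^{n_0}$ only forces the intersection numbers with $I^n$ to vanish for the finitely many $n\le n_0$, so the second stabilization (the choice of $n_0$) is exactly what is needed to propagate the vanishing to all $n$ and contradict the positive limit supplied by the Action-Linking Lemma.
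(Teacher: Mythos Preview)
Your proof is correct and takes a genuinely different route from the paper. Both arguments proceed by contradiction, use the Action-Linking Lemma as the decisive input, and exploit a finite-dimensionality stabilization; but the organization differs substantially. The paper fixes once and for all a basis $e_1,\dots,e_R$ of $H\cap B_1(M)$ with rational Seifert surfaces $f^s_r$ in $M$ (which may well meet the links $I^n$), introduces closed $2$-forms $\Theta_n$ Poincar\'e dual to the weighted links, and shows that for any sequence $c_j\in\Cc\cap B_1(M\setminus I^{n_j})$ with bounding chains $A_j\in C_2(M\setminus I^{n_j})$ one has $\langle A_j,\Theta_{n_{j+1}}\rangle$ uniformly bounded below; finite-dimensionality of $H_2(M,L;\R)$ then feeds an inductive elimination of the coefficients expressing $[A_j]$ in terms of $[A_1],\dots,[A_{j_*}]$, forcing $[A_j]=0$ for $j$ large and contradicting that lower bound. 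You instead first stabilize $H\cap B_1(M\setminus I^n)$ to a rational subspace to extract a single rational $c_*$, then stabilize the span of the $[\mu^n]$ in $H_1(M\setminus L;\R)$ to fix $n_0$, and crucially construct one Seifert surface for $Qc_*$ \emph{inside} $M\setminus I^{n_0}$; the intersection numbers with $\mu^n$ then vanish for $n\le n_0$ and hence for all $n$, contradicting the positive limit from Lemma~\ref{lemma_action_linking} directly. Your version avoids the $\Theta_n$-machinery and the inductive step, at the price of the geometric realization you flag (building the embedded Seifert surface in the open manifold $M\setminus I^{n_0}$), which is indeed standard by the resolution argument you sketch. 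The two finite-dimensionality inputs are Poincar\'e--Lefschetz dual to one another: the paper uses $H_2(M,L;\R)$ where you use $H_1(M\setminus L;\R)$.
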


\begin{proof}
There is no loss of generality to assume that $\vol(\lambda)=1$.
On the space $H$ we have the norm $\|\sum_{k=1}^m a_kh_k\| = \sum_{k=1}^m |a_k|$, and on $H \cap B_1(M)$ we have the norm $\|\sum_{r=1}^R b_re_r\|_0 = \sum_{r=1}^R |b_r|$. 
The norm on $H \cap B_1(M)$ obtained by restricting $\|\cdot\|$ to $H \cap B_1(M)$ is denoted by $\|\cdot\|_1$. 
Since these are norms on a finite-dimensional vector space, we find $\kappa>0$ such that $\|\cdot\|_0 \leq \kappa \|\cdot\|_1$.
Choose $\eta > 0$ satisfying $\eta \geq \max_{r,s}|y^s_r|$.
It is important to note that both $\kappa$ and $\eta$ are independent of $n$. 
By Lemma~\ref{lemma_action_linking}
\begin{equation*}
n\gg1 \qquad \Rightarrow \qquad \left|\sum_j p^n_j \frac{{\rm int}(\gamma^n_j,f^s_r)}{T(\gamma^n_j)} - \left< f^s_r,d\lambda \right> \right| < \frac{1}{2\kappa\eta} \ \min_k T(h_k) \quad \forall r \, .
\end{equation*}
If $n$ is large enough and $c\in \Cc \cap B_1(M)$, write $c = \sum_{k=1}^m a_k h_k = \sum_{r=1}^R b_re_r$ with $\sum_{k=1}^m a_k = 1$ and estimate
\begin{equation*}
\begin{aligned}
& \left| \sum_{j,r,s} b_ry^s_r p^n_j \frac{{\rm int}(\gamma^n_j,f^s_r)}{T(\gamma^n_j)} - \sum_{k=1}^m a_k T(h_k) \right| 
= \left| \sum_{r,s} b_ry^s_r \left( \sum_j p^n_j \frac{{\rm int}(\gamma^n_j,f_r)}{T(\gamma^n_j)} - \left< f^s_r,d\lambda \right> \right) \right|  \\
& \qquad \leq \left( \frac{1}{2\kappa\eta} \min_k T(h_k) \max_{r,s}|y^s_r| \right) \sum_{r=1}^R |b_r| 
\leq \left( \frac{1}{2\kappa} \min_k T(h_k) \right) \|c\|_0 \\
& \qquad \leq \left( \frac{1}{2} \min_k T(h_k) \right) \|c\|_1 = \left( \frac{1}{2} \min_k T(h_k) \right) \sum_{k=1}^m a_k = \frac{1}{2} \min_k T(h_k) \, .
\end{aligned}
\end{equation*}
Combining with 
$$
\sum_{k=1}^m a_k T(h_k) \geq (\min_k T(h_k)) \sum_{k=1}^m a_k = \min_k T(h_k)
$$
and using the triangle inequality, we conclude that 
\begin{equation}
\label{estimate_intersections_1}
n \gg 1 \qquad \Rightarrow \qquad \sum_{j,r,s} b_ry^s_r p^n_j \frac{{\rm int}(\gamma^n_j,f^s_r)}{T(\gamma^n_j)} \geq  \frac{1}{2} \min_k T(h_k) \, .
\end{equation}

Let $\Theta_n$ be a closed $2$-form on $M$ representing the Poincar\'e dual in $H^2(M;\R)$ of the class in $H_1(M;\R)$ represented by the cycle $\Sigma_j \ p^n_j T(\gamma^n_j)^{-1} \ \gamma^n_j$.
Observe that for any fixed closed $1$-form $\alpha$ on $M$ we can compute
$$
\begin{aligned}
\lim_{n\to\infty} \int_M\alpha \wedge \Theta_n 
&= \lim_{n\to\infty} \sum_j \frac{p^n_j}{T(\gamma^n_j)} \int_{\gamma^n_j}\alpha \\
&= \lim_{n\to\infty} \int_M i_X\alpha \ d\mu^n \\
&= \frac{1}{\vol(\lambda)} \int_M i_X\alpha \ \lambda \wedge d\lambda \\
&= \frac{1}{\vol(\lambda)} \int_M \alpha \wedge d\lambda = 0 \, .
\end{aligned}
$$
In particular, this can be applied to the closed $1$-form $\alpha$ representing the Poincar\'e dual of the class in $H_2(M;\R)$ induced by an arbitrary $S \in Z_2(M)$, thus giving
\begin{equation*}
\lim_{n\to\infty} \left<S,\Theta_n\right> = 0 \, .
\end{equation*}

To every $A \in C_2(M)$ satisfying $\partial A \in H$ we can linearly associate $\hat A$ defined by 
$$ \partial A = \sum_rb_re_r \qquad \Rightarrow \qquad \hat A = \sum_{r,s} b_ry^s_rf^s_r \, . $$
Here we used $\partial A \in H \cap B_1(M)$.
In particular, $\partial \hat A = \sum_rb_re_r$ and $\partial (A-\hat A)=0$.

To prove the lemma we now argue by contradiction, and assume that we can find $n_j \to \infty$ and $c_j \in \Cc \cap B_1(M\setminus I^{n_j})$.
Let $A_j \in C_2(M\setminus I^{n_j})$ satisfy $\partial A_j = c_j$.
By what was proved above, we can select further subsequences and assume, with no loss of generality, that 
\begin{equation*}
\left|\left< A_j - \hat A_j,\Theta_{n_{j+1}} \right>\right| < \frac{1}{4} \min_kT(h_k) \qquad \left< \hat A_j,\Theta_{n_{j+1}} \right> \geq \frac{1}{2} \min_kT(h_k) \, .
\end{equation*}
It follows that
\begin{equation}
\label{ineq_A_j}
\left< A_j,\Theta_{n_{j+1}} \right> > \frac{1}{4} \min_kT(h_k) \qquad \forall j \, .
\end{equation}

If $m\geq l$ then $\left<A_m,\Theta_{n_l}\right>=0$ since $A_m$ is supported in $M \setminus I^{n_m} \subset M \setminus I^{n_l}$. Here we used~\eqref{links_are_increasing}.
Each $A_j$ induces a class $[A_j] \in H_2(M,L;\R)$. 
Since the dimension of $H_2(M,L;\R)$ is finite, we can find $j_*$ such that all $[A_j]$ are in the span of $\{[A_1],[A_2],\dots,[A_{j_*}]\}$. 
Write $[A_j] = d_{j1}[A_1] + \dots + d_{jj_*}[A_{j_*}]$. 
We will now argue to prove that $j>j_* \Rightarrow [A_j]=0$. 
Consider $j>j_*$. Then $j\geq 2$ and with the help of~\eqref{ineq_A_j} one gets
$$
0 = \left<A_j,\Theta_{n_2}\right> = d_{j1} \left<A_1,\Theta_{n_2}\right> \Rightarrow d_{j1} = 0 \, .
$$
We are done if $j_*=1$.
If $j_*>1$ then $j\geq 3$ and $0 = \left<A_j,\Theta_{n_3}\right> = d_{j2} \left<A_2,\Theta_{n_3}\right>$ implies that $d_{j2} = 0$.
We are done if $j_*=2$, and if not then $j\geq 4$ and one argues similarly to conclude that $d_{j3}=0$, and so on. This inductive process shows that $d_{js} = 0$ for every $1\leq s\leq j_*$.
This proves that $[A_j]$ vanishes in $H_2(M,L;\R)$ when~$j$ is large enough, in contradiction to $\left<A_j,\Theta_{n_{j+1}}\right>>0$.
\end{proof}

To conclude the proof of Proposition~\ref{prop_gen_1}, fix $n$ so large that $\Cc \cap B_1(M\setminus I^n) = \emptyset$. 
This is possible by the previous lemma.
We invoke the Hahn-Banach theorem to obtain a continuous linear functional $\varphi_n: C_1(M\setminus I^n) \to \R$ satisfying
\begin{equation}
\label{extension_positive_on_the_cone}
\varphi_n|_{\Cc} > 0 \qquad \text{and} \qquad \varphi_n|_{B_1(M\setminus I^n)} \equiv 0 \, .
\end{equation}
By reflexivity~\cite[\S 17]{derham}, we find $\omega_n \in \Omega^1(M\setminus I^n)$ such that $\varphi_n(\cdot) = \left< \cdot,\omega_n \right>$. 
Since~$\varphi_n$ vanishes on $B_1(M\setminus I^n)$ we get $d\omega_n=0$. 
Since all $h_k$ belong to $\Cc$, we get from~\eqref{extension_positive_on_the_cone} that 
\begin{equation*}
\int_{h_k} \omega_n > 0 \qquad \forall k=1,\dots,m \, .
\end{equation*}
The cohomology class of $\omega_n$ in $H^1(M\setminus I^n;\R)$ is the class we were looking for, and the proof of Proposition~\ref{prop_gen_1} is complete.

\section{Openness of the supporting condition}\label{section: open}

\begin{proposition}
\label{prop: open}
Let $X$ be a smooth vector field on a smooth closed $3$-manifold~$M$ that has a $\partial$-strong Birkhoff section $\iota:S \to M$, such that $\iota(\partial S)$ consists of nondegenerate periodic orbits.
There exists a $C^1$-neighborhood $\mathcal{N}(X)$ of $X$ such that every smooth vector field $X' \in \mathcal{N}(X)$ has a $\partial$-strong Birkhoff section isotopic to $\iota$.
\end{proposition}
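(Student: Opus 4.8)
The plan is to pass to the blow-up of Section~\ref{ssec_blowing_up}, where a $\partial$-strong Birkhoff section becomes a genuine transverse-at-the-boundary global section of a flow tangent to the boundary of a compact $3$-manifold, and then to deduce stability of such objects from openness of transversality together with boundedness of return times; the delicate point is that near the binding only the linearised dynamics is involved, which is robust precisely because the binding orbits are nondegenerate. \emph{Step 1 (persistence of the binding).} Write $\iota(\partial S)=\gamma_1\cup\dots\cup\gamma_k$ with each $\gamma_i$ a nondegenerate periodic orbit. Applying the implicit function theorem to the Poincar\'e return maps --- whose derivatives at the fixed points are the transverse linearised Poincar\'e maps, hence depend continuously on $X$ in the $C^1$-topology --- one finds a $C^1$-neighbourhood of $X$ in which every $X'$ has, near each $\gamma_i$, a unique periodic orbit $\gamma_i'$, with $\gamma_i'\to\gamma_i$ as $X'\to X$; differentiating $\dot\gamma_i'=X'\circ\gamma_i'$ once more shows this convergence is in fact $C^2$. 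Choosing an isotopy $(h_s)_{s\in[0,1]}$ of $M$ supported near $\iota(\partial S)$, with $h_0=\mathrm{id}$, $h_1(\gamma_i)=\gamma_i'$ and $h_1\to\mathrm{id}$ in $C^2$, the field $(h_1^{-1})_*X'$ is $C^1$-close to $X$ (here $C^2$-closeness of $h_1$ to $\mathrm{id}$ is used), and a $\partial$-strong Birkhoff section of $(h_1^{-1})_*X'$ pushes forward by $h_1$ to one of $X'$ isotopic to it. So we may assume from now on that $X'$ has exactly the same binding $L:=\iota(\partial S)$ as $X$.

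\emph{Step 2 (lifting to the blow-up).} Blow $L$ up as in Section~\ref{ssec_blowing_up}, using fixed tubular neighbourhoods adapted to $X$: one gets the compact domain $D_L$ with $D_L\setminus\partial D_L=M\setminus L$ and $\partial D_L=\bigsqcup_i\Sigma_{\gamma_i}$, and both $X$ and $X'$ extend to smooth vector fields $X_L,X_L'$ tangent to $\partial D_L$. The formulae of Section~\ref{ssec_blowing_up} show that $X_L'\to X_L$ in $C^0$ on $D_L$ and in $C^1$ away from $\partial D_L$, and that near $\Sigma_{\gamma_i}$ one has $X_L'=V_i'+O(r)$ where $V_i'$ is a torus field of the form $\partial_t+b_i'(t,\theta)\partial_\theta$ (up to reparametrising $t$), with $b_i'$ an algebraic function of the $1$-jet of $X'$ along $\gamma_i'$, so $b_i'\to b_i$ uniformly, and the $O(r)$ normal term has $C^0$-norm controlled linearly by that $1$-jet. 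The $\partial$-strong Birkhoff section $\iota$ lifts to an immersion $\bar\iota:S\to D_L$ which embeds $S\setminus\partial S$ into $D_L\setminus\partial D_L$, carries $\partial S$ into $\partial D_L$, is transverse to $X_L$ at \emph{every} point of $D_L$ --- transversality along $\partial D_L$ being exactly the requirement that the trace be transverse to the linearised flow, via the identification of $\mathbb{P}^+\gamma_i$ with $\Sigma_{\gamma_i}$ and~\eqref{function_b} --- and is such that every orbit of $\phi^t_{X_L}$, including those contained in $\partial D_L$, meets $\bar\iota(S)$ in both forward and backward time (this last being condition~(iii) of Definition~\ref{defn_BS_GSS} together with the ``global section for the linearised flow'' clause of the $\partial$-strong hypothesis).

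\emph{Step 3 (stability in $D_L$, and conclusion).} Such a section has uniformly bounded return time: the flow on the compact surface $\partial D_L$ has the transverse curve $\bar\iota(\partial S)$ as a global section, hence bounded return time there; an orbit of $X_L$ entering a thin collar $\{r<\rho\}$ of $\partial D_L$ stays in it (the $\partial_r$-component being $O(r)$ with bounded coefficient) and shadows the corresponding boundary orbit for bounded time, so it crosses $\bar\iota(S)$ in bounded time by transversality; and an orbit remaining in the compact set $\{r\geq\rho\}$ crosses the compact transverse surface $\bar\iota(S)$ with return time bounded by continuity of the first-return map. Fix $T_0$ with $D_L=\bigcup_{t\in[0,T_0]}\phi^t_{X_L}(\bar\iota(S))$. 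Then the conjunction ``the section is transverse to $Y$ at every point of $D_L$, and $D_L=\bigcup_{t\in[0,T_0+1]}\phi^t_Y(\text{section})$'' is open in the pair (field, section): transversality is open, and the evaluation map $[-1,T_0+1]\times S\to D_L$, $(t,q)\mapsto\phi^t_Y(\text{section}(q))$, is a submersion of manifolds-with-boundary --- in the interior sense off $\partial D_L$, in the boundary-submersion sense along $\partial D_L$ --- at a preimage point of every $p\in D_L$, so its image is open, hence equal to the connected $D_L$ as it is also compact; and this persists under $C^1$-small joint perturbation, the collar $\{r<\rho\}$ being controlled not by $C^1$-closeness of flows (which fails along $\partial D_L$) but by a Gr\"onwall estimate comparing $\phi^t_{X_L'}$ directly with $\phi^t_{X_L}$, uniform over a $C^1$-neighbourhood of $X$ because the Lipschitz constant of the fixed field $X_L$ is finite and $\|X_L'-X_L\|_{C^0}$ on $\{r<\rho\}$ is bounded by $\|X'-X\|_{C^1}$ plus a term small once $\rho$ is small. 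Applying this with $Y=X_L'$ and a $C^1$-small perturbation $\bar\iota'$ of $\bar\iota$ --- needed only to return $\partial S$ to $\partial D_L$ and keep the trace transverse to $X_L'|_{\partial D_L}$, possible since $b_i'\to b_i$ --- yields a $\partial$-strong Birkhoff section of $X_L'$ in $D_L$; blowing down gives $\iota'$ for $X'$ on $M$, and concatenating $(h_s)$ with the small isotopy from $\bar\iota$ to $\bar\iota'$ shows $\iota'$ is isotopic to $\iota$.

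\emph{Main obstacle.} The crux is the behaviour near the binding under perturbations that are only $C^1$-small: then $X_L'$ is \emph{not} $C^1$-close to $X_L$ along $\partial D_L$, so the naive claim that ``transversality and the covering condition are open'' must be argued with care. The resolution is that near the binding everything --- the shape of the lifted section, the linearised flow on $\mathbb{P}^+\gamma_i$, the $\partial_r$-equation --- depends only on the $1$-jet of $X$ along the binding, which does vary $C^1$-continuously (nondegeneracy being essential here, since it makes the binding, hence this $1$-jet, persist), while the higher-order part of $X_L$ is $C^0$-small with $1$-jet-controlled coefficients; a Gr\"onwall comparison with the fixed field $X$ then closes the estimate uniformly. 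This is also why the hypothesis must be $\partial$-strong rather than merely ``Birkhoff'': only then does $\iota$ blow up to a genuine transverse-to-the-boundary global section, a class of objects whose stability is classical, whereas a plain Birkhoff section need not.
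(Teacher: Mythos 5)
Your proposal is correct and follows essentially the same route as the paper's proof: persist the nondegenerate binding orbits via the implicit function theorem and an ambient isotopy supported near the binding, blow up along the binding, use the key observation that $C^1$-closeness of the vector fields on $M$ yields (only) $C^0$-closeness of the blown-up fields on $D_L$ --- which nonetheless suffices for transversality of the lifted section up to $\partial D_L$ and for a uniform bound on return times --- and then blow down. The minor inaccuracies (an orbit entering a thin collar of $\partial D_L$ need not stay in it, and the open-image argument for the evaluation map is delicate at the endpoints of the time interval) are inessential, since the Gr\"onwall/flow-box shadowing mechanism you invoke is exactly what closes the argument, as it implicitly does in the paper.
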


\begin{proof}
Denote $K = \iota(\partial S)$, $\ell = \iota(S\setminus \partial S)$, and let $\ell^* \in H^1(M\setminus K;\R)$ be the dual class.
As in Section~\ref{ssec_blowing_up}, we blow $M$ up along $K$ to get the compact manifold with boundary $D_K$.
There is a smooth projection $P:D_K \to M$ represented as $(t,r,\theta) \mapsto (t,re^{i\theta})$ with the aid of appropriate polar tubular coordinates~\eqref{polar_tubular_coordinates}.
The map $P$ defines a diffeomorphism $D_K\setminus \partial D_K \to M\setminus K$ and collapses $\Sigma_\gamma$ to~$\gamma$.
The vector field $X\vert_{M\setminus K}$ extends smoothly to $D_K$ as a nonsingular vector field tangent to $\partial D_K$. 
The extended vector field is denoted by $\tilde X$.
In fact, there is a boundary torus $\Sigma_\gamma \subset \partial D_K$ associated to every orbit $\gamma \subset K$, and the tubular polar coordinates $(t,\theta)$ are global periodic coordinates on $\Sigma_\gamma \simeq \mathbb{P}^+\gamma$ that represent the projection to~$\gamma$ as $(t,\theta) \mapsto t$.
In these coordinates $\tilde X$ assumes the form $\partial_t+b(t,\theta)\partial_\theta$ and its dynamics is precisely linearized dynamics on $\mathbb{P}^+\gamma$.
It is straightforward to construct a smooth map $\tilde\iota:S \to D_K$ satisfying $\iota = P \circ \tilde\iota$.
The assumption that $\iota$ is $\partial$-strong implies that, up to a $C^\infty$-small perturbation, $\tilde\iota(S)$ is an embedded surface in $D_K$ transverse to $\partial D_K$. 
Any vector field $X'$ on $M$ that coincides with $X$ on $K$ extends smoothly to a vector field $\tilde X'$ on $D_K$, tangent to $\partial D_K$.
Moreover, if $X'$ is $C^1$-close to $X$ then $\tilde X'$ is $C^0$-close to $\tilde X$. This follows from the construction in Section~\ref{ssec_blowing_up}. In fact, referring back to the notation established in Section~\ref{ssec_blowing_up}, if $A_1(1,re^{i\theta}),A_2(t,re^{i\theta})$ and $A_1'(t,re^{i\theta}),A_2'(t,re^{i\theta})$ are the functions corresponding to $X$ and $X'$, respectively, then the $C^k$-norm of $A_j-A_j'$ on a compact neighborhood of some component of $K$ is controlled by the $C^{k+1}$-norm of $X-X'$. 
It follows that $\tilde\iota(S)$ is transverse to $\tilde X'$ up to $\partial D_K$ and there is a smooth (hence bounded) return time to $\tilde\iota(S)$ for the dynamics of $\tilde X'$.
It follows that $\iota$ is a $\partial$-strong Birkhoff section for $X'$ provided that $X'$ is $C^1$-close enough to~$X$.

Now take a transverse disk section $D_\gamma$ near a component $\gamma$ of $K$, where $\gamma$ appears as an isolated fixed point of the first return map.
By the implicit function theorem, this fixed point varies slightly under $C^1$-small perturbations of~$X$ and defines a periodic orbit $\gamma'$ for the perturbed vector field, $C^1$-close to $\gamma$. 
Combined with the ``extension of isotopies'' theorem, we obtain that for every~$\epsilon>0$ there exists a $C^1$-small neighborhood $\mathcal{N}_\epsilon(X)$ of $X$ such that for every smooth vector field $X'\in \mathcal{N}_\epsilon (X)$ there is an $\epsilon$-small (in $C^1$) smooth isotopy $(\phi_t)_{t\in [0,1]}$, $\phi_0=id_M$, of $M$ taking $\gamma$ to a periodic orbit $\gamma'=\phi_1(\gamma)$ of $X'$, and similarly for the other binding components.
This isotopy can be taken supported near $K$.
By further multiplying by a function $h$ $\epsilon$-close to $1$ in $C^1$, the vector fields $(\phi_1)_*X$ and $hX'$ get to agree on $\phi_1(K)$.
Our work shows that $\iota$ is a $\partial$-strong Birkhoff section for $(\phi_1)^*(hX')$. 
It follows that $\phi_1 \circ \iota$ is a $\partial$-strong Birkhoff section for $X'$ when $X'\in \mathcal{N}_\epsilon (X)$.
\end{proof}

\section{Genericity of entropy}

Let $\phi^t:M\to M$ be a  flow on a closed $3$-manifold.  
The topological entropy $h_{top}(\phi^t)$ is a non-negative number that measures the complexity of the flow. 
We review a definition, originally introduced by Bowen \cite{Bow}. 
We first endow $M$ with a metric $d$. 
Given $T>0$ we define $d_T(x,y)=\max \{ d(\phi^t(x),\phi^t(y)) \colon t\in [0,T] \}$ for any couple of points $x,y\in M$. 
A subset $S\subset M$ is $(T,\epsilon)$-separated if for all $x\neq y $ in $S$ we have that $d_T(x,y)>\epsilon$. 
Let $N(T,\epsilon)$ be the maximal cardinality of a $(T,\epsilon)$-separated subset of $M$. 
The topological entropy is defined as
$$
h_{top}(\phi^t) = \lim_{\epsilon\to 0} \ \limsup_{T\to \infty}\frac{1}{T}\log(N(T,\epsilon)).
$$
If $M$ has dimension 3 and the flow is at least $C^2$, deep results of Katok~\cite{katok} adapted to the case of flows by Lima and Sarig~\cite{LS19}, imply that having positive entropy is equivalent to the existence of a transverse homoclinic connection: an intersection between the stable and unstable manifolds of a hyperbolic periodic orbit that is transverse. 
Thus, having positive topological entropy is an open condition in the $C^\infty$-topology.
Consequently, to establish Theorem~\ref{thm: entropy} it remains to show that every contact form on a fixed closed contact $3$-manifold can be arbitrarily well $C^\infty$-approximated by a contact form that defines the same contact structure, and whose Reeb flow has positive topological entropy.

\subsection{Input from topological surface dynamics}

Here we review results on the dynamics of surface homeomorphisms due to Mather~\cite{Mather}, Koropecki~\cite{Koro2010}, Koropecki, Le Calvez and Nassiri~\cite{KLN}, and Le Calvez and Sambarino~\cite{LS}, as well as some basic notions in Carath\'eodory's theory of prime ends.

In this section $S$ is an orientable surface without boundary.
A boundary representative of $S$ is a sequence $P_1 \supset P_2 \supset P_3 \dots$ of connected, unbounded (not relatively compact), open sets such that $\partial P_i$ is compact for every $i$, and such that for every $K \subset S$ compact there exists $n_K$ such that $i>n_K$ implies $P_i \cap K = \emptyset$.
Here $\partial P_i$ denotes the topological boundary of $P_i$ relative to $S$.
Two boundary representatives $\{P_i\}$ and $\{P'_i\}$ are equivalent if for every $n$ there exists $m$ such that $P_m \subset P'_n$, and \textit{vice versa}.
An ideal boundary point is an equivalence class of boundary representatives.
The set of ideal boundary points of~$S$, also called the ideal boundary of~$S$, is denoted by $b_IS$.
The ideal completion of $S$ is defined by $c_IS = S \sqcup b_IS$, with the topology generated by the open subsets of $S$, and sets of the form $V \cup V'$ where $V$ is an open subset of $S$ whose boundary relatively to $S$ is compact, and $V' \subset b_IS$ consists of points that admit a boundary representative $\{P_i\}$ satisfying $P_i \subset V$ for every~$i$.
If $f:S\to S$ is a homeomorphism then there is an induced homeomorphism $f_S:c_IS\to c_IS$.
If $b_IS$ is finite then $c_IS$ is an orientable compact surface without boundary.

Let $U \subset S$ be open. 
The impression of $p \in b_IU$ in $S$ is the set 
$$
Z(p) = \bigcap_{\substack{V \subset c_IU \ \text{open} \\ p\in V}} {\rm cl}_S(V\cap U)
$$
where ${\rm cl}_S$ denotes closure relative to $S$. Note that $Z(p)$ is closed in $S$ and contained in the topological boundary $\partial U$ of $U$ relative to $S$.
If $S$ has finite genus then an ideal boundary point $p\in b_IU$ is said to be regular if $p$ is isolated in $b_IU$ and $Z(p)$ has more than one point.

A compact set $K \subset S$ is called a continuum if it is connected and has at least two points. 
A continuum $K$ is said to be cellular if $K = \cap_{n\in \N}D_n$ where each $D_n \subset S$ is a closed disk, and $D_{n+1}$ is contained in the interior of $D_n$, for every~$n$.
Cellular continua are contractible in $S$, see~\cite[page~411]{KLN}.
A continuum $K$ is said to be annular if $K = \cap_{n\in \N}A_n$, where each $A_n \subset S$ is homeomorphic to a closed annulus, $A_{n+1}$ is contained in the interior of $A_n$, and $K$ separates both boundary components of $A_n$, for every $n$.

From now on $S$ is assumed to have finite genus.
Let $U \subset S$ be open and $p\in b_IU$ be regular.
In Carath\'eodory's theory of prime ends one constructs a space $b_{\mathscr{E}_p}U$ homeomorphic to a circle, in such a way that if $U$ is invariant by a
homeomorphism $f:S\to S$ then there is an induced 
homeomorphism on $b_{\mathscr{E}_p}U$. Its Poincar\'e's rotation number is denoted by $\rho(f,p) \in \R/\Z$.
The reader is referred to~\cite[Section~3]{KLN} for a nice introduction to the theory of prime ends.

From now on let $f:S\to S$ be an orientation preserving homeomorphism.
A $1$-translation arc $\gamma$ for $f$ is defined to be an embedded compact arc from a point $x$, which is not a fixed point of $f$, to $f(x)$ such that $f(\gamma) \cap \gamma$ is either equal to $\{f(x)\}$ or equal to $\{x,f(x)\}$, the latter case precisely when $x$ is fixed by $f^2$.
If $N\geq 2$ then a $N$-translation arc for $f$ is a $1$-translation arc such that $f^k(\gamma)\cap\gamma = \emptyset$ for all $2\leq k \leq N-1$, and $f^N(\gamma) \cap \gamma$ is either equal to $\emptyset$ or to $\{x\}$, the latter case precisely when $x$ is fixed by $f^{N+1}$.

\begin{theorem}[Special case of Theorem~4.2 from~\cite{KLN}]
\label{thm_4.2_KLN}
Suppose that $S$ is compact, and that $U \subset S$ is an open $f$-invariant disk such that $S \setminus U$ has more than one point.
Assume that $f$ preserves a Borel measure $\mu$ on $S$ such that $\mu(W)>0$ for every non-empty open set $W \subset S$, and that $\mu(U\setminus C)<\infty$ for some compact set $C \subset U$.
Let $p$ be the point in $b_IU$ and assume that $\rho(f,p) \neq 0$.
Then there exists $N \in \N$ and a compact set $K \subset U$ such that every $N$-translation arc in $S\setminus K$ is disjoint from $\partial U$.
\end{theorem}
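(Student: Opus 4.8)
The plan is to obtain the statement as the instance of \cite[Theorem~4.2]{KLN} in which $S$ is moreover compact, $U$ is a topological disk, and the prime-ends rotation number is only assumed to be non-zero (rather than irrational, or rational of a prescribed type); so the work is to check that our hypotheses are exactly the ones needed there, and then to quote it. First I would verify the hypotheses. Since $U$ is an $f$-invariant open disk, its ideal boundary $b_IU$ reduces to the single point $p$, which is isolated; and because $S\setminus U$ has more than one point, the impression $Z(p)=\partial U$ has more than one point, so $p$ is regular and the prime-ends circle $b_{\mathscr{E}_p}U$ and the induced rotation number $\rho(f,p)$ are defined, with $\rho(f,p)\neq 0$ by assumption. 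The measure $\mu$, being positive on non-empty open sets and finite on the collar $U\setminus C$, supplies exactly the recurrence hypothesis used in \cite{KLN} (compare \cite{Mather,Koro2010}): it forces Poincar\'e recurrence on every collar neighbourhood of $\partial U$ inside $U$, which is what makes the prime-ends rotation number reflect the genuine dynamics of $f$ near $\partial U$ rather than an artefact of a wandering collar.

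For the reader's convenience I would then recall the mechanism. Pass to the prime-ends compactification $\widehat U=U\sqcup b_{\mathscr{E}_p}U$, a closed topological disk on which $f|_U$ extends to a homeomorphism $\widehat f$ restricting on $\partial\widehat U=b_{\mathscr{E}_p}U$ to an orientation-preserving circle homeomorphism $g$ of rotation number $\alpha=\rho(f,p)\neq 0$. If $\alpha=a/b$ in lowest terms then $b\geq 2$ and every periodic point of $g$ has period $b$, so $g^{k}$ is fixed-point free for $1\leq k\leq b-1$; if $\alpha$ is irrational then $g^{k}$ is fixed-point free for all $k\geq 1$. Choosing $N$ accordingly (namely $N=b$ in the rational case, and $N$ large with $N\alpha$ bounded away from $0$ in $\R/\Z$ in the irrational case), the points $\theta,g(\theta),\dots,g^{N-1}(\theta)$ are pairwise distinct for every $\theta\in b_{\mathscr{E}_p}U$ and sit in the cyclic order of a non-trivial $\alpha$-rotation orbit. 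One then fixes a compact $K\subset U$ so large that $U\setminus K$ is a thin collar of $\partial U$, disjoint from the fixed-point set of $\widehat f$ in $U$ (non-empty, by Brouwer, since $g$ has no fixed point) and from the ``inner'' recurrent part of the dynamics. If some $N$-translation arc $\gamma\subset S\setminus K$ met $\partial U$, then the closure $\beta$ of the component of $\gamma\cap U$ reaching $\partial U$ would be a simple arc in $U$ landing at a prime end $\theta$; the translation-arc condition makes $\beta,f(\beta),\dots,f^{N-1}(\beta)$ pairwise disjoint simple arcs in $\widehat U$ with $f^{k}(\beta)$ landing at $g^{k}(\theta)$, while $f^{N}(\gamma)$ meets $\gamma$ in at most one interior point. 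Carath\'eodory prime-ends theory together with Franks-type arguments for translation arcs shows that such a configuration of disjoint iterated sub-arcs, landing at an orbit of prime ends wound non-trivially by $\widehat f$, cannot occur; this contradiction produces the desired $N$ and $K$.

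The step I expect to be the genuine obstacle, were one to write a from-scratch proof, is precisely this last incompatibility: controlling the interaction between the translation-arc combinatorics and the rotational prime-ends dynamics, and choosing $K$ so that every trace of the ``inner'' recurrent dynamics is removed while the rotational collar survives in $S\setminus K$ (and handling, symmetrically, the case of an arc that touches $\partial U$ without entering $U$). Since this is exactly the content of \cite[Theorem~4.2]{KLN}, my plan is to carry out the hypothesis matching above and then quote that result.
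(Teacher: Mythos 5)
Your plan coincides with the paper's treatment: the statement is not proved there but simply quoted as a special case of \cite[Theorem~4.2]{KLN}, the only verification being that the measure hypothesis (positive on non-empty open sets, finite on $U\setminus C$) implies the $\partial$-nonwandering condition of \cite[Definition~3.10]{KLN}, which is exactly your hypothesis-matching step. Your additional sketch of the prime-ends mechanism inside \cite{KLN} is not needed for this purpose, but the core argument is correct and the same as the paper's.
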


The assumption that $f$ preserves a Borel measure that is positive on non-empty open sets and  finite on $U\setminus C$ for some compact $C\subset U$, implies that $f$ is $\partial$-nonwandering in the sense~\cite[Definition~3.10]{KLN}. The latter property is an assumption of~\cite[Theorem~4.2]{KLN}.

\begin{theorem}[Corollary~7.2 from~\cite{KLN}]
\label{thm_cor_7.2_KLN}
Suppose that $S$ is compact, $f$ is non-wandering, $U\subset S$ is an $f$-invariant open connected set, and $p\in b_IU$ is regular and fixed.
If $\rho(f,p)$ is irrational then either $Z(p)$ is an annular continuum with no periodic points, or $Z(p)$ is a cellular continuum with a unique fixed point and no other periodic points.
\end{theorem}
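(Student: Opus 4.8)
The plan is to split the statement into a topological part --- that $Z(p)$ is either a cellular or an annular continuum --- and a dynamical part, in which the irrationality of $\rho(f,p)$ is used, via Cartwright--Littlewood-type fixed point theorems and Mather's theory of prime ends, to control the periodic points of $f$ on $Z(p)$.

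For the topological alternative I would argue as follows. Since $p$ is isolated in $b_IU$ and $S$ has finite genus, the end of $U$ determined by $p$ has arbitrarily small neighborhoods in $U$ homeomorphic to $\mathbb{S}^1\times[0,\infty)$. Fix a nested sequence $C_n\subset U$ of such collar neighborhoods with $\bigcap_n\mathrm{cl}_S(C_n)=Z(p)$, let $\gamma_n$ be the inner boundary circle of $C_n$, and let $V_n$ be the component of $S\setminus\gamma_n$ containing the $p$-end of $U$; then $\overline{V_n}$ is a compact surface with one boundary circle, the sequence is nested, and the genus stabilizes because $S$ has finite genus. Regularity of $p$ guarantees that $Z(p)$ is not a single point, and isolatedness of $p$ prevents the end from splitting into infinitely many pieces; after the bookkeeping carried out in~\cite{Koro2010} and~\cite[Section~7]{KLN}, only two stable configurations remain: the $V_n$ are eventually disks, in which case $Z(p)$ is a cellular continuum, or they are eventually annuli with $Z(p)$ separating their two boundary curves, in which case $Z(p)$ is annular.

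For the dynamical part I would first observe that, since $f_U$ fixes $p$ and $\rho(f,p)$ is irrational, the induced prime-end circle homeomorphism at $p$ has no periodic orbit, so $Z(p)$ contains no periodic point that is accessible from the $U$-side: such a point $q$ would be a principal point of a prime end at $p$, and the set of prime ends with principal point $q$ is a point or a closed arc of the prime-end circle that $f^{\mathrm{per}(q)}$ maps to itself, hence would contain a periodic prime end, a contradiction. In the annular case the same argument applied to the complementary region on the other side of $Z(p)$ rules out accessibility from that side too, and a Cartwright--Littlewood/Barge--Gillette argument for invariant annular continua --- which links a periodic point in the continuum to rationality of both complementary prime-end rotation numbers --- then forces $Z(p)$ to have no periodic point at all. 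In the cellular case $Z(p)$ is a non-separating continuum contained in a disk, so the Cartwright--Littlewood fixed point theorem produces a fixed point $q_0\in Z(p)$; uniqueness of $q_0$ and the absence of other periodic points I would then deduce from a fixed-point-index computation together with the fact that a second periodic point, combined with the inaccessibility argument above and the irrationality of $\rho(f,p)$, would produce a Poincar\'e--Birkhoff-type contradiction with the prime-end rotation number.

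The hard part will be the dynamical step, specifically promoting ``accessible from $U$'' to ``periodic prime end'' and its variants in the cellular case: a periodic point on $Z(p)$ need not be accessible, and controlling the set of prime ends that land at a prescribed point of the impression requires Mather's precise correspondence between prime ends and their principal and impression sets --- this is where the substance of~\cite[Section~7]{KLN} lies. By contrast, the topological dichotomy rests only on the finite genus of $S$ and the isolatedness of the end at $p$, and is comparatively routine.
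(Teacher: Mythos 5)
You should first be aware that the paper does not prove this statement at all: it is quoted verbatim as Corollary~7.2 of~\cite{KLN} and used as a black box (in the proof of Lemma~\ref{lemma_thm_8.3_KLN}), so what you are really attempting is a reproof of a theorem of Koropecki--Le~Calvez--Nassiri. Measured against that task, your sketch has a genuine gap at its foundation: the ``topological part'' is not a topological fact. It is false that regularity of $p$, finite genus and isolatedness of the end force $Z(p)$ to be cellular or annular. Take $S=S^2$, let $K$ be a figure-eight (two circles meeting at one point) and let $U$ be the complementary component whose boundary is all of $K$; then $b_IU$ is a single (hence isolated) point $p$, $Z(p)=K$ has more than one point, the surfaces $V_n$ bounded by circles $\gamma_n\subset U$ near the end are disks --- and yet $Z(p)$ is neither cellular nor annular. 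This also exposes the flaw in your criterion ``$V_n$ eventually disks $\Rightarrow Z(p)$ cellular'': the impression is $\bigcap_n {\rm cl}_S(V_n\cap U)$, not the ``filled'' intersection $\bigcap_n {\rm cl}_S(V_n)$, and the stabilized genus could moreover be positive. In~\cite{KLN} the logic runs in the opposite order: one first proves, using the nonwandering hypothesis and the irrationality of $\rho(f,p)$, that $Z(p)$ contains no periodic point except possibly a single fixed point (this is the hard dynamical core), and only then does topology enter, via Koropecki's theorem~\cite{Koro2010} that an aperiodic invariant continuum of a nonwandering homeomorphism of a compact orientable surface is annular, together with a separate argument yielding cellularity in the unique-fixed-point case. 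Your architecture (dichotomy first, dynamics second) cannot be repaired, because the dichotomy is itself a dynamical conclusion.

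The dynamical part as written is also incomplete or circular. The step ``an accessible periodic point yields a periodic prime end'' needs an argument: distinct rays landing at the same point $q$ determine distinct prime ends, the $f$-invariant set of prime ends at which such rays land need not be finite or closed, and a circle homeomorphism with irrational rotation number has many infinite invariant sets, so no immediate contradiction arises. More importantly, the theorem must exclude \emph{non-accessible} periodic points of $Z(p)$, and this is exactly where the nonwandering hypothesis and the substance of~\cite{KLN} are used; you acknowledge this by deferring it to ``[KLN, Section~7]'', which assumes the very result to be proved. Similarly, in the cellular case the Cartwright--Littlewood theorem does give a fixed point, but uniqueness and the absence of other periodic points --- the actual content of the statement --- are left to an unspecified ``Poincar\'e--Birkhoff-type contradiction''. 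As a self-contained proof the proposal therefore does not stand; as a reduction it is circular, whereas the paper simply cites~\cite{KLN}.
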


To conclude this section we review the notions of Moser stable and Mather sectorial periodic points from~\cite[Section~5]{Mather}.
Let $q$ be a fixed point of $f$.

Choose a contracting homeomorphism $\alpha$ of $[0,+\infty)$, i.e. $\alpha^n(t) \to 0$ for all $t\geq0$.
The map $(s,t) \mapsto (\alpha(s),\alpha^{-1}(t))$ defines a homeomorphism of $[0,+\infty) \times [0,+\infty)$ denoted by $\alpha \times \alpha^{-1}$.
An elementary sector for $(f,q)$ is a closed subset $U \subset S$ such that $q \in \partial U$, $q$ has a neighborhood $N$ in $U$ satisfying $f(N) \subset U$ and $f^{-1}(N) \subset U$, and the germ of $f|_U$ at $q$ is topologically conjugate to the germ of $\alpha \times \alpha^{-1}$ at $(0,0)$.
Then $q$ is said to be a Mather sectorial fixed point of $f$ if a neighborhood of $q$ in $S$ is a finite union of elementary sectors for $(f,q)$.
It turns out that this definition is independent of the choice of $\alpha$.

The point $q$ is a Moser stable fixed point of $f$ if every neighborhood of $q$ in $S$ contains an $f$-invariant disk $D$ with $q$ in its interior, such that $f|_{\partial D}$ has an orbit dense in $\partial D$.

If $q$ is a periodic point of $f$, then $q$ is a Mather sectorial periodic point of $f$ if it is a Mather sectorial fixed point of $f^n$ for some $n$. 
Similarly, $q$ is a Moser stable periodic point of $f$ if it is a Moser stable fixed point of $f^n$ for some $n$.

\begin{remark}
\label{rmk_translation_arcs}
Let $q$ be a Mather sectorial periodic point of $f$. 
It follows from the above definitions that there exists $n \geq 1$ such that $f^n(q)=q$, and such that for every $N\geq 1$ there is neighborhood $V_N$ of $q$ in $S$ with the following property: every point in $V_N \setminus \{q\}$ belongs to an $N$-translation arc for $f^n$.
\end{remark}

We continue to follow~\cite[Section~5]{Mather} closely.
A fixed connection of $f$ is an $f$-invariant arc $\gamma \subset S$ whose end points are fixed points of $f$, or an $f$-invariant circle $\gamma$ such that $f|_\gamma$ is orientation preserving and $\gamma$ contains a fixed point of $f$. 
A periodic connection is a fixed connection of $f^n$, for some $n$.

Let $\mu$ be a Borel probability measure on $S$. 
We denote by $\mathcal{A}(S,\mu)$ the set of orientation preserving homeomorphisms $f:S\to S$ satisfying $f_*\mu=\mu$.
We denote by $\mathcal{G}(S,\mu)$ the subset of $\mathcal{A}(S,\mu)$ consisting of those homeomorphisms such that every periodic point is Mather sectorial or Moser stable, and have no periodic connections.

\subsection{Applications to return maps of Birkhoff sections}

Our goal here is to establish the following result.

\begin{proposition}
\label{prop_no_elliptic_homoclinic}
Let $\lambda$ be a strongly nondegenerate contact form defined on a closed and connected $3$-manifold $M$.
If its Reeb flow has no elliptic periodic orbits, and has a $\partial$-strong Birkhoff section, then some periodic orbit has a homoclinic connection.
\end{proposition}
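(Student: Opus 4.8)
I would argue by contradiction, reducing the statement to the dynamics of the first-return map on the section, blown up along the binding.

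\smallskip\noindent\emph{Step 1: the return map on the blown-up section.} Write $K=\iota(\partial S)$; by nondegeneracy and the absence of elliptic orbits $K$ consists of hyperbolic periodic orbits. As in Section~\ref{ssec_blowing_up} and in the proof of Proposition~\ref{prop: open}, I would blow $M$ up along $K$ to get the compact manifold $D_K$ with $\partial D_K=\bigsqcup_{\gamma\subset K}\Sigma_\gamma$, each $\Sigma_\gamma\simeq\mathbb{P}^+\gamma$ carrying the linearized dynamics, and lift $\iota$ to $\tilde\iota:S\to D_K$. Since $\iota$ is $\partial$-strong, a $C^\infty$-small perturbation makes $\Sigma:=\tilde\iota(S)$ an embedded compact surface with $\partial\Sigma\subset\partial D_K$, transverse to the extended vector field up to the boundary, and the $\partial$-strong Birkhoff property makes $\Sigma$ a global surface of section for the extended flow on all of $D_K$. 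The first return to $\Sigma$ is thus an orientation-preserving diffeomorphism $g:\Sigma\to\Sigma$ preserving $\partial\Sigma$ and preserving the finite Borel measure $\mu=\tilde\iota^{*}(d\lambda)$, which restricts to an area form on $\Sigma\setminus\partial\Sigma$ and gives positive mass to every non-empty open subset of $\Sigma$; in particular $g$ is non-wandering. Periodic Reeb orbits other than the binding correspond to interior periodic points of $g$, and stable/unstable manifolds of a Reeb orbit project to those of the corresponding periodic point. On a boundary circle $c\subset\partial\Sigma$ over a binding orbit $\gamma$, the map $g|_c$ is conjugate to the return map of the linearized flow on $\mathbb{P}^+\gamma$ to the trace of $\iota$ along $c$; hyperbolicity of $\gamma$ makes this a Morse--Smale circle diffeomorphism whose periodic points are the images of the eigendirections of $\gamma$. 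The associated boundary periodic points of $g$ are topological saddles: the interior branch of such a point is the trace on $\Sigma$ of a local branch of $W^{u}(\gamma)$ or $W^{s}(\gamma)$, and the other branch runs along $\partial\Sigma$. In this dictionary a homoclinic connection of a Reeb orbit $\gamma$ means a homoclinic point of $g$ at the corresponding interior periodic point, or, if $\gamma$ is a binding orbit, a heteroclinic intersection of $g$ between two boundary saddles lying over $\gamma$.

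\smallskip\noindent\emph{Step 2: placing $g$ in $\mathcal{G}(\Sigma,\mu)$.} Assume, for contradiction, that no periodic Reeb orbit has a homoclinic connection. By the dictionary above $g$ then has no homoclinic point at any periodic point and no heteroclinic intersection between boundary saddles over a common binding orbit. Strong nondegeneracy forces all intersections between stable and unstable manifolds of the Reeb flow to be transverse, so no two (un)stable manifolds of $g$ coincide along an arc; combined with the absence of elliptic orbits, every interior periodic point of $g$ is a Mather sectorial hyperbolic saddle, every boundary periodic point is a sectorial saddle, and $g$ has no periodic connections except for the arcs of $\partial\Sigma$ joining consecutive boundary saddles. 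Hence $g$ lies in the class $\mathcal{G}(\Sigma,\mu)$ introduced above, in the form adapted to surfaces with boundary.

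\smallskip\noindent\emph{Step 3: the surface-dynamics contradiction, and the main obstacle.} It then remains to show that no such $g$ can exist. Since the Reeb flow has periodic orbits (the binding orbits, or by~\cite{taubes}), $g$ has a periodic point, which by Step 2 is a Mather sectorial saddle. The plan is to run the machinery of Mather~\cite{Mather}, Koropecki--Le Calvez--Nassiri~\cite{KLN} and Le Calvez--Sambarino~\cite{LS} recalled above, adapted to bordered surfaces: Remark~\ref{rmk_translation_arcs} supplies, near each such periodic point, $N$-translation arcs for every $N$; Theorem~\ref{thm_4.2_KLN} shows that arbitrarily long translation arcs reaching the boundary of an invariant complementary disk force its prime-end rotation number to vanish; and Theorem~\ref{thm_cor_7.2_KLN} then forces the invariant continua bounding such disks --- built from closures of (un)stable manifolds of periodic points of $g$ --- to be annular or cellular continua carrying essentially no periodic points. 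The hyperbolic periodic points of $g$, together with the way $\Sigma$ must wind around the hyperbolic binding orbits as a surface of section, cannot be organised into such a rigid structure while preserving the full-support measure $\mu$, and this contradiction would complete the proof. The delicate point is precisely this last step: one must choose the right invariant open sets and invariant continua (in $\Sigma$, or in its ideal completion $c_{I}(\Sigma\setminus\partial\Sigma)$) for the prime-end dichotomy of~\cite{KLN} and the translation-arc criterion to apply, and must verify that the $\partial$-strong Birkhoff structure along the hyperbolic binding orbits furnishes exactly the translation arcs --- equivalently, the nontrivial prime-end rotation --- needed to invoke Theorem~\ref{thm_4.2_KLN}, all while keeping the bordered-surface adaptations of $\mathcal{G}$ and of the results of~\cite{KLN} and~\cite{LS} mutually consistent.
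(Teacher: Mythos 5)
There is a genuine gap, and it sits exactly where you flag ``the main obstacle'': your Step 3 never supplies the mechanism that actually produces a homoclinic point. The paper's argument is not a soft structural contradiction. After the setup, it proves bordered analogues of the results of~\cite{KLN} (Lemma~\ref{lemma_thm_8.3_KLN}, Corollaries~\ref{cor_8.7_KLN} and~\ref{cor_8.9_KLN}), uses them to show that the Le Calvez--Sambarino relation on hyperbolic periodic points is an equivalence relation, and arrives at the quantitative statement of Lemma~\ref{lemma_5.1_LS}: any collection of strictly more than $2g$ periodic points of the return map (with $g$ the genus of the capped-off section) contains a point with a homoclinic connection. The decisive dynamical input, absent from your proposal, is then the $2$-or-$\infty$ dichotomy of~\cite{CDR} together with the result of~\cite{CGHHL} that a nondegenerate contact form with exactly two closed Reeb orbits has both of them elliptic; since you assume there are no elliptic orbits, there are infinitely many periodic orbits, hence more than $2g$ periodic points of the return map, hence a homoclinic connection. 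Without this counting argument, the prime-end machinery alone gives no contradiction: if the flow had only finitely many (at most $2g$) hyperbolic periodic orbits, nothing in Theorems~\ref{thm_4.2_KLN} and~\ref{thm_cor_7.2_KLN} or in the full-support invariant measure rules out the ``rigid structure'' you appeal to, so your contradiction scheme cannot close as stated.

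A secondary but real issue is your Step 2 in the bordered setting. The class $\mathcal{G}$ excludes all periodic connections, and the boundary arcs of $\Sigma$ joining consecutive boundary saddles are themselves invariant arcs with periodic endpoints, i.e.\ periodic connections; so $g$ does not lie in (a naive bordered version of) $\mathcal{G}$, and Mather's theorem and the results of~\cite{KLN} are stated for surfaces without boundary. The paper avoids this by collapsing each boundary circle of $\dot S$ to a point, obtaining a homeomorphism $f$ of the closed surface $c_I\dot S$, and by modifying the section near the hyperbolic binding orbits (using a normal form) so that the collapsed points are Mather sectorial periodic points of $f$; strong nondegeneracy then excludes periodic connections and yields $f\in\mathcal{G}(c_I\dot S,\mu)$ directly, with no need of your contradiction hypothesis at this stage (a transverse homoclinic intersection is not a periodic connection). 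If you want to keep the bordered surface, you must either carry out the adaptations of~\cite{Mather}, \cite{KLN} and~\cite{LS} you allude to, or perform this capping; and in either case you still need Lemma~\ref{lemma_5.1_LS} plus the infinitude of periodic orbits from~\cite{CDR} and~\cite{CGHHL} to conclude.
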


We are now concerned with the proof of Proposition~\ref{prop_no_elliptic_homoclinic}.
The Birkhoff section is an immersion $\iota:S\to M$ defined on a compact orientable surface with boundary $S$ as in Definition~\ref{defn_BS_GSS}.
As explained in Section~\ref{sec_getting_BSs}, we can blow the link $L = \iota(\partial S)$ up and obtain a $3$-manifold $D_L$ with boundary, whose boundary components consist of invariant tori.
It is straightforward to check that there is a unique immersion $\widehat\iota:S \to D_L$ that agrees with $\iota$ on $\dot S = S\setminus\partial S$, maps $\partial S$ to $\partial D_L$ and is transverse to $\partial D_L$ along $\partial S$.
As in the proof of Proposition~\ref{prop: open}, $\widehat\iota$ defines a smooth embedding $S\hookrightarrow D_L$ transverse to $\partial D_L$ that defines a global section for the extended flow on $D_L$, and is still denoted by $S$ for simplicity.
The return map on~$S$ is a smooth diffeomorphism.
The space $c_I\dot S$ is nothing but the closed orientable surface obtained from $S$ by collapsing the boundary components to points.
The return map on~$S$ induces an orientation preserving homeomorphism 
$$
f : c_I\dot S \to c_I\dot S
$$
that is smooth on $\dot S$, and such that $b_I\dot S$ consists of periodic points.
If $\lambda$ is the contact form defining the Reeb flow on $M$, then $d\lambda$ defines a finite Borel measure on $\dot S$ that is invariant by $f|_{\dot S}$. After normalizing, we get an induced Borel probability measure $\mu$ on $c_I\dot S$ with the following properties: 
\begin{itemize}
\item $\mu$ is $f$-invariant. 
\item Every non-empty open subset $W \subset c_I\dot S$ satisfies $\mu(W)>0$.
\item $\mu$ agrees on $\dot S$ with a constant multiple of the measure induced by $d\lambda$.
\end{itemize}
Since every component $\gamma$ of $L$ is a hyperbolic periodic orbit of the Reeb flow, the embedded surface $S \subset D_L$ could be modified in such a way that all points in $b_I\dot S$ are Mather sectorial periodic points of $f$, keeping all the other properties described so far.
This follows from a normal form of the Reeb flow near the hyperbolic closed Reeb orbit $\gamma$.
Clearly, every periodic point in $\dot S$ coming from a periodic orbit of the Reeb flow in $M\setminus L$ is Mather sectorial, since all closed Reeb orbits are hyperbolic. 
Moreover, the strong nondegeneracy assumption implies that there are no periodic connections. Summarizing, we have $f \in \mathcal{G}(c_I\dot S,\mu)$.
If follows that $f^n \in \mathcal{G}(c_I\dot S,\mu)$ for every $n\in\Z$.

The next statement is a direct application of~\cite[Theorem~5.1]{Mather} and of Theorem~\ref{thm_cor_7.2_KLN}.
Its proof is extracted from the proof of~\cite[Theorem~8.3]{KLN}.

\begin{lemma}
\label{lemma_thm_8.3_KLN}
Let $U \subset c_I\dot S$ be an open $f$-invariant connected set, and let $p \in b_IU$ be regular and periodic.
Then $Z(p) \subset c_I\dot S$ is an annular continuum with no periodic points of $f$. 
\end{lemma}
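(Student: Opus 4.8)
The plan is to deduce the statement from Theorem~\ref{thm_cor_7.2_KLN} after passing to a well-chosen iterate of $f$, and then to discard the cellular alternative appearing there by means of Mather's analysis of accessible sectorial boundary fixed points. Two structural facts will be used repeatedly: $f$ has no periodic connections, and --- because the Reeb flow of $\lambda$ has no elliptic periodic orbits --- every periodic point of $f$ on $c_I\dot S$ is Mather sectorial, so in particular none is Moser stable. To begin, since $p\in b_IU$ is periodic there is an integer $n\geq 1$ such that $g:=f^n$ satisfies $g(U)=U$ and fixes the ideal boundary point $p$. The homeomorphism $g$ is non-wandering because $\mu$ is $g$-invariant and positive on every non-empty open set, it lies in $\mathcal{G}(c_I\dot S,\mu)$, and the set of its periodic points coincides with that of $f$, so each of them is again Mather sectorial. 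It thus suffices to prove the conclusion for $g$, whose regular periodic prime end $p$ is now fixed.

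Next I would show that the prime end rotation number $\rho(g,p)\in\R/\Z$ is irrational. Assume instead that $\rho(g,p)=a/b$ in lowest terms; then $\rho(g^b,p)=0$ and $g^b$ is again in $\mathcal{G}(c_I\dot S,\mu)$ with the same periodic points as $g$. By the Carath\'eodory--Mather theory of prime ends (see~\cite[Section~5]{Mather}), a fixed prime end with vanishing rotation number yields a fixed point $q$ of $g^b$ that lies in the impression $Z(p)$ and is accessible from $U$. Being a periodic point of $f$, this $q$ is Mather sectorial, and Mather's Theorem~5.1 in~\cite{Mather} then shows that the separatrices issued from $q$ and contained in $\partial U$ assemble into a periodic connection of $g^b$ --- an invariant arc joining two fixed points, or an invariant circle through a fixed point. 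This contradicts $g^b\in\mathcal{G}(c_I\dot S,\mu)$, so $\rho(g,p)$ is irrational.

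With $\rho(g,p)$ irrational, I would apply Theorem~\ref{thm_cor_7.2_KLN} to the non-wandering homeomorphism $g$, to the invariant open connected set $U$, and to the regular fixed prime end $p$: either $Z(p)$ is an annular continuum with no periodic points of $g$, or $Z(p)$ is a cellular continuum with a unique fixed point $q$ of $g$ and no other periodic point. In the second case $q$ is a Mather sectorial periodic point of $f$, and it is accessible from $U$ since $Z(p)$ is a non-separating continuum carrying a single fixed point; Mather's Theorem~5.1 would then force $\rho(g,p)$ to be rational, a contradiction. Hence the first alternative holds, and as $g=f^n$ this says precisely that $Z(p)$ is an annular continuum containing no periodic point of $f$, which is the claim. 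The argument is the one from the proof of~\cite[Theorem~8.3]{KLN}, transported to the homeomorphism $f$ of $c_I\dot S$ with the invariant measure $\mu$.

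The main obstacle is the middle step, namely excluding a rational prime end rotation number: this is exactly the place where the absence of periodic connections must be combined with the sectorial structure of the boundary fixed points through Mather's Theorem~5.1, and the delicate technical point is to guarantee that the fixed point produced in $Z(p)$ is genuinely accessible from $U$, so that Mather's theorem applies. The same mechanism, used once more in the irrational case, is what rules out the cellular continuum in the last step.
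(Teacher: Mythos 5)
Your overall skeleton (pass to an iterate fixing $p$, get irrationality of the prime end rotation number from Mather, feed it into Theorem~\ref{thm_cor_7.2_KLN}, then kill the cellular alternative) matches the paper's, but the step where you discard the cellular continuum has a genuine gap. You assert that the unique fixed point $q \in Z(p)$ ``is accessible from $U$ since $Z(p)$ is a non-separating continuum carrying a single fixed point.'' Accessibility does not follow from these facts: a cellular continuum with empty interior can contain points that are not accessible from the complement (think of the endpoint of a spiral together with its limiting behavior), and nothing in the hypotheses singles out $q$ as an accessible prime point of $U$. Since your contradiction in the cellular case (``an accessible periodic point forces rational rotation number'') hinges entirely on this accessibility, the argument does not close. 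You yourself flag this as the delicate point, but flagging it does not discharge it. Moreover, your invocation of Mather's Theorem~5.1 is off: as used in the paper, its conclusion is that the prime end rotation number of an invariant domain at a regular fixed ideal point is \emph{nonzero} (under the sectorial/Moser-stable, no-connection hypotheses); it is not a statement producing periodic connections from an accessible fixed point, nor one forcing rationality. For the irrationality step you can simply apply that nonvanishing statement to all iterates of $f$, as the paper does; your detour through ``zero rotation number yields an accessible fixed point in $Z(p)$'' introduces the same unproven accessibility issue a second time.

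The paper's route around accessibility is the part you are missing: since $q$ (called $x$ there) is Mather sectorial, Remark~\ref{rmk_translation_arcs} produces, for every $N$, an $N$-translation arc arbitrarily close to $q$ and meeting $Z(p)$, and this contradicts Theorem~\ref{thm_4.2_KLN}, which (given the irrational, hence nonzero, prime end rotation number and the invariant measure) forbids such translation arcs from touching the boundary of the relevant invariant disk outside a compact set. Because Theorem~\ref{thm_4.2_KLN} is stated for an invariant \emph{disk} in a \emph{compact} surface, one must also organize the topology: if $c_I\dot S$ is a sphere one applies it to the invariant disk $c_I\dot S \setminus Z(p)$, and otherwise one lifts to the universal cover (a plane), passes to the one-point compactification, and applies it to the complement of the lifted continuum, using that the lift fixes a preimage of $q$ and that the prime end rotation number is preserved. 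Replacing your accessibility claim by this translation-arc argument (including the sphere versus plane case distinction) is what is needed to make the proof correct.
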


\begin{proof}
Up to taking powers, there is no loss of generality to assume that $U$ is invariant and $p$ is fixed. A key tool is~\cite[Theorem~5.1]{Mather} asserting that if $g$ is an area- and orientation-preserving homeomorphism of an orientable surface such that every periodic point is either Moser stable or Mather sectorial with no periodic connections, if $V$ is a $g$-invariant open set, and if $q \in b_IV$ is regular and fixed by the map on $c_IV$ induced by $g$, then $\rho(g,q)\neq 0$ in $\R/\Z$.
Applying this fact to $U$ and all iterates of $f$ we conclude that $\rho(f,p)$ is irrational.
Theorem~\ref{thm_cor_7.2_KLN} implies that either $Z(p)$ is an annular continuum with no periodic points, or it is a cellular continuum that has a fixed point $x$ of $f$ and no other periodic points of~$f$. 
Note that $Z(p)$ has empty interior since it is contained in the boundary of $U$.
Now we reproduce the argument from~\cite[Theorem~8.3]{KLN} with small adaptations, and assume by contradiction that the latter alternative holds.

The surface $c_I\dot S$ cannot be a sphere.
Otherwise $c_I\dot S \setminus Z(p)$ would be an open $f$-invariant disk with irrational prime ends rotation number. The fixed point $x$ in $Z(p)$ is Mather sectorial.  
We can use Remark~\ref{rmk_translation_arcs} to find, for every $N$, an $N$-translation arc $\gamma$ for $f$ inside any neighborhood of $x$, 
such that $\gamma$ intersects $Z(p)$.
This contradicts Theorem~\ref{thm_4.2_KLN}; note that $c_I\dot S \setminus U$ is not a point since $Z(p)$ is a continuum.

It follows that the universal covering $\pi : \R^2 \to c_I\dot S$ is a plane.
Choose a point $\tilde x \in \pi^{-1}(x)$, and a lift $\tilde f$ of $f$ such that $\tilde f(\tilde x)=\tilde x$.
Let $K$ be the connected component of $\pi^{-1}(Z(p))$ contained in a disk $\tilde D$ having $\tilde x$ in its interior, and that $\pi|_{\tilde D}$ is a homeomorphism onto a neighborhood of $Z(p)$.
Then $\tilde f(K)=K$ and $K$ has empty interior.
Let $S'$ be the one-point compactification of $\R^2$, i.e. $S'$ is the sphere obtained from $\R^2$ by adding a point at infinity, and let $f':S'\to S'$ be the map induced by $\tilde f$.
Then $U' = S' \setminus K$ is an open disk with $\partial U'=K$ (boundary relative to $S'$).
If $p'$ is the (unique) point of $b_IU'$ then $\rho(f',p') = \rho(f,p)$ is irrational.
But $\tilde x$ is Mather sectorial and we can argue as above, using Remark~\ref{rmk_translation_arcs}, to find, for every $N$, an $N$-translation arc $\gamma$ for 
$f'$ inside any neighborhood of $\tilde x$, 
such that $\gamma$ intersects $K$.
This contradicts Theorem~\ref{thm_4.2_KLN}.
\end{proof}

\begin{corollary}[Corollary~8.7 from~\cite{KLN}]
\label{cor_8.7_KLN}
If $U \subset c_I\dot S$ is an open connected $f$-periodic set with $\#b_IU<\infty$, then the boundary of $U$ is a finite disjoint union of aperiodic annular continua and periodic points.
\end{corollary}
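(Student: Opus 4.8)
The plan is to deduce this from Lemma~\ref{lemma_thm_8.3_KLN} together with the elementary classification of impressions and the standard description of $\partial U$ in terms of them; this is essentially the argument of~\cite[Corollary~8.7]{KLN}. First we would reduce to the fixed case. Since $U$ is $f$-periodic and $b_IU$ is finite, some iterate $f^m$ satisfies $f^m(U)=U$ and fixes every point of $b_IU$; moreover $f^m\in\mathcal{G}(c_I\dot S,\mu)$, and a point is periodic for $f$ if and only if it is periodic for $f^m$, so it suffices to prove the statement with $f$ replaced by $f^m$. From now on we assume $f(U)=U$ and that $f$ fixes each $p\in b_IU$. Because $\#b_IU<\infty$, the ideal completion $c_IU$ is a compact surface.

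Next we would prove that $\partial U=\bigcup_{p\in b_IU}Z(p)$ and classify the pieces. The inclusion $Z(p)\subset\partial U$ is part of the definition. For the reverse inclusion, choose for each $p\in b_IU$ a connected neighborhood $W_p$ of the corresponding end inside $U$, with the $W_p$ pairwise disjoint, with ${\rm cl}_U(W_p)\setminus W_p$ compact, and with $Z(p)\subset{\rm cl}_{c_I\dot S}(W_p)$; then $U\setminus\bigcup_p W_p$ has compact closure in $U$, so $\partial U\subset\bigcup_p{\rm cl}_{c_I\dot S}(W_p)$, and shrinking the $W_p$ along neighborhood bases of the ends (using that there are only finitely many ends) gives $\partial U\subset\bigcup_p Z(p)$. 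Each $Z(p)$ is non-empty, being a nested intersection of non-empty compacta in the compact space $c_I\dot S$. If $Z(p)$ is a single point then it is fixed by $f$, since $f(Z(p))=Z(p)$, hence a periodic point of the original map. If $Z(p)$ has at least two points then $p$, being isolated in the finite set $b_IU$, is regular, and Lemma~\ref{lemma_thm_8.3_KLN} (applied to $f$ and, if needed, its powers) shows that $Z(p)$ is an annular continuum with no periodic points, i.e. an aperiodic annular continuum.

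Finally we would identify the connected components of $\partial U$. An impression that is a single periodic point cannot meet an impression that is an aperiodic annular continuum, the latter containing no periodic points; and if a component of $\partial U$ contains a periodic point, then connectedness forces that component to be exactly that point. It remains to see that two distinct impressions which are annular continua cannot overlap. Here one uses the structural description of a regular end $p$: it has a neighborhood in $U$ of the form of an open annulus accumulating on $Z(p)$ from one side, so that at most two ends of $U$ can accumulate on a given annular continuum $A$, and each of them has impression exactly $A$; hence such an $A$ is a whole connected component of $\partial U$. It follows that the connected components of $\partial U$ are precisely the distinct sets among $\{Z(p):p\in b_IU\}$, each of which is an aperiodic annular continuum or a periodic point, which is the assertion.

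The main obstacle is the last step, namely ruling out that two distinct annular-continuum impressions overlap, equivalently that a component of $\partial U$ is a proper union of several annular continua. This rests on the local ``two-sidedness'' of an annular continuum realized as the impression of a regular end, which is part of the structural analysis of ends carried out in~\cite{KLN}; the rest is point-set topology together with a direct appeal to Lemma~\ref{lemma_thm_8.3_KLN}.
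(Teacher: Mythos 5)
Your proposal is correct and takes essentially the same route as the paper: reduce to an iterate fixing $U$ and each ideal point, classify each impression $Z(p)$ as a periodic point or, via Lemma~\ref{lemma_thm_8.3_KLN}, an aperiodic annular continuum, and conclude from $\partial U=\bigcup_{p\in b_IU}Z(p)$. The only notable difference in emphasis is that the paper devotes most of its proof to showing that distinct points of $b_IU$ are separated (hence isolated, which is what makes $p$ regular), a point you take for granted from finiteness, whereas the disjointness discussion you add at the end is left implicit in the paper and delegated to the structure theory of~\cite{KLN}.
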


\begin{proof}
The set $b_IU$ consists of finitely many periodic points for the induced map $f_U:c_IU\to c_IU$. Moreover, every point in $b_IU$ is an isolated point.
Indeed, let $p=[(P_i)_{i\in \N}]$ and $q=[(Q_j)_{j\in \N}]$ be two points in $b_IU$.
If there exists $i_0,j_0$ such that $P_{i_0} \cap Q_{j_0}$ is relatively compact in $S$, then there exists $i_1,j_1$ such that for every $i>i_1$ and $j>j_1$, $P_i\cap Q_j =\emptyset$ (since by definition $P_i$ avoids every fixed relatively compact set for $i$ large enough) and thus $p$ and $q$ are separated.
Otherwise $P_i\cap Q_j$ is unbounded for all $i,j$.
We decompose $P_i = (P_i \setminus {\rm cl}_S(Q_j)) \sqcup (P_i\cap \partial_S Q_j) \sqcup (P_i \cap Q_j)$. For $i$ large enough, $P_i$ avoids the compact $\partial_S Q_j$ and, as 
$P_i$ is connected, it is contained either in the open set $P_i \setminus {\rm cl}_S(Q_j)$ or in the open set $P_i\cap Q_j$. It is necessarily in $P_i\cap Q_j$ since  $P_i\cap Q_j$ is unbounded (hence nonempty) and thus $P_i \subset Q_j$ when $i$ is large enough.
Exchanging the role of $p$ and $q$ we also get that given $i$, if $j$ is large enough $Q_j\subset P_i$. Putting the two inclusions together, we get $p=q$.

We find $n\geq 1$ such that $U$ is invariant by $f^n$, and every point in $b_IU$ is fixed by $(f_U)^n$.
Let $p\in b_IU$. If $Z(p)$ is not a point then $p$ is regular and, by Lemma~\ref{lemma_thm_8.3_KLN}, $Z(p)$ is an aperiodic annular continuum invariant by $f^n$.
If $Z(p)$ is a point then it is a periodic point of $f$. The conclusion follows since the boundary of $U$ is $\cup_{p\in b_IU}Z(p)$.
\end{proof}

\begin{corollary}[Corollary~8.9 from~\cite{KLN}]
\label{cor_8.9_KLN}
Let $x \in \dot S$ be a periodic point of $f$, which is necessarily hyperbolic.
If $x$ belongs to some periodic continuum $K \subset c_I\dot S$, then the stable and unstable manifolds of $x$ are contained in $K$.
Moreover, all four stable and unstable branches of $x$ have the same closure in $c_I\dot S$.
\end{corollary}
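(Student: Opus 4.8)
The plan is to reduce the whole statement to a single containment assertion and then to prove that assertion by a ``first-exit'' argument based on Corollary~\ref{cor_8.7_KLN}. First I would make the usual reductions. Since $x$ is periodic for $f$ and $K$ is $f$-periodic, after replacing $f$ by an iterate we may assume $x$ is fixed and $K$ is $f$-invariant; since $x$ is hyperbolic, its linearization has real eigenvalues of modulus different from $1$, and replacing $f$ by $f^2$ if necessary we may assume they are positive, so that $x$ has exactly two unstable branches $\Gamma^u_\pm$ and two stable branches $\Gamma^s_\pm$, each $f$-invariant. Recall that $f$ preserves the full-support measure $\mu$, hence is non-wandering, and that $f^n \in \mathcal{G}(c_I\dot S,\mu)$ for every $n\in\Z$, so in particular $f^{-1}$ shares all the relevant properties of $f$.

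The key reduction is that it suffices to establish the assertion (a): \emph{for every $f$-invariant continuum $K'\subset c_I\dot S$ with $x\in K'$ one has $W^u(x)\subset K'$}; running the same argument with $f$ replaced by $f^{-1}$ (which interchanges stable and unstable manifolds) then yields $W^s(x)\subset K'$ as well, and applying both with $K'=K$ gives the first assertion of the corollary. For the second assertion, fix one of the four branches $\Gamma$; then $\overline{\Gamma}$ is compact, connected, has more than one point and is $f$-invariant, hence an $f$-invariant continuum containing $x$, so by (a) and its stable counterpart $W^u(x)\cup W^s(x)\subset\overline{\Gamma}$. Applying this to two branches $\Gamma_1,\Gamma_2$ gives $\Gamma_2\subset\overline{\Gamma_1}$ and $\Gamma_1\subset\overline{\Gamma_2}$, hence $\overline{\Gamma_1}=\overline{\Gamma_2}$; so all four closures coincide, and they are readily seen to equal $\overline{W^u(x)}=\overline{W^s(x)}$.

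To prove (a), since $W^u(x)=\bigcup_{n\ge0}f^n(W^u_{\mathrm{loc}}(x))$ and $K'$ is closed and $f$-invariant, it is enough to show that a local unstable branch lies in $K'$. Parametrize it by an embedding $\gamma:[0,1]\to c_I\dot S$ with $\gamma(0)=x$ and $\gamma((0,1])\subset W^u(x)\setminus\{x\}$, and suppose for contradiction that $\gamma([0,1])\not\subset K'$. Set $t^*=\sup\{t\in[0,1]:\gamma([0,t])\subset K'\}$, so that $w:=\gamma(t^*)\in K'$, $t^*<1$, and there are $t_n\downarrow t^*$ with $\gamma(t_n)\notin K'$; for large $n$ these points lie in a single connected component $C$ of $c_I\dot S\setminus K'$, so $w\in\partial C\subset K'$ and $C$ is $f$-periodic. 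Assume momentarily that $\#b_IC<\infty$, so that Corollary~\ref{cor_8.7_KLN} applied to a suitable iterate of $f$ describes $\partial C$ as a finite disjoint union of aperiodic annular continua and of periodic points, each a connected component of $\partial C$. If $t^*>0$, then $w\ne x$ lies on $W^u(x)$ and is therefore non-periodic, so $w$ belongs to an aperiodic annular continuum $A\subset\partial C$; but $A$ is closed and invariant under an iterate of $f$, so $f^{-n}(w)\in A$ for all $n$, forcing $x=\lim_n f^{-n}(w)\in A$, contradicting that $A$ has no periodic points. If $t^*=0$, then $x=w\in\partial C$ is by Corollary~\ref{cor_8.7_KLN} an isolated point of $\partial C$; choosing a disk $D\ni x$ with $D\cap\partial C=\{x\}$, the connected set $D\setminus\{x\}$ meets $C$ and misses $\partial C$, hence $D\setminus\{x\}\subset C\subset c_I\dot S\setminus K'$, so $D\cap K'=\{x\}$ and $x$ is an isolated point of the continuum $K'$ — absurd. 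This contradiction proves (a), modulo the case $\#b_IC=\infty$.

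The step I expect to be the main obstacle is precisely that last case: when the complementary component $C$ has infinitely many ideal boundary points, Corollary~\ref{cor_8.7_KLN} does not apply verbatim and one must instead appeal to the analysis of general invariant open sets in~\cite{KLN}, from which one still extracts the only two facts used above — that a non-periodic boundary point of $C$ lies in an aperiodic annular continuum in $\partial C$, and that a periodic boundary point is isolated in $\partial C$. (In the applications one takes $K=\overline{W^u(x)}$, so the inclusion $W^u(x)\subset K$ is automatic and only $W^s(x)\subset K$ genuinely requires this argument.) All other points — the $f$-invariance of the individual branches when the eigenvalues are positive, and the insensitivity of every assertion to replacing $f$ by an iterate — are routine.
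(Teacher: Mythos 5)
Your global scheme (reduce to the claim that every invariant continuum containing $x$ contains the unstable branch, deduce the stable case by passing to $f^{-1}$, and get equality of the four closures by applying this to the closure of each branch) is the same as the paper's, and your contradiction in the case $t^*=0$ is exactly the one used there. The genuine gap is in how you produce the complementary component $C$: from $\gamma(t_n)\to w$, $\gamma(t_n)\notin K'$, you assert that for large $n$ these points lie in a \emph{single} component $C$ of $c_I\dot S\setminus K'$, and nothing justifies this. A continuum can have infinitely many distinct complementary components accumulating at one of its points (delete from a closed disk a sequence of pairwise disjoint open round disks whose centers converge to an interior point $w$ of the disk: the remainder is a continuum and the deleted disks are distinct complementary components accumulating at $w$), and in the intended application $K'$ is the closure of an invariant branch, which can be topologically wild, so this scenario cannot be excluded by soft arguments. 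If the components $C_n$ containing the $\gamma(t_n)$ are eventually pairwise distinct, you never get $w\in\partial C$ for a single periodic $C$, and both subcases of your contradiction collapse.

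The repair is precisely the paper's argument, which bypasses the first-exit point altogether: take any point $p$ of the branch with $p\notin K'$ and let $C$ be its complementary component; $C$ is periodic because $f$ preserves the finite full-support measure $\mu$, and the backward iterates of $p$ under the power of $f$ preserving $C$ (and fixing $x$) remain in $C$ and converge to $x$ since $p\in W^u(x)$, so $x\in\partial C$ directly. This lands you in (the analogue of) your $t^*=0$ case only, so the discussion of aperiodic annular continua for $t^*>0$ becomes unnecessary. Moreover, the obstacle you flag as the main one, namely $\#b_IC=\infty$, does not arise: the paper disposes of it by citing Mather's result (\cite[Lemma~2.3]{Mather}, see also \cite[page~457]{KLN}), which gives $\#b_IC<\infty$ for such an invariant connected open set, so Corollary~\ref{cor_8.7_KLN} applies as stated; then $x$ is isolated in $\partial C$, hence (by your disk argument) isolated in $K'$, contradicting that $K'$ is a continuum. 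With these two changes your proof coincides with the paper's.
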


\begin{proof}
Let $\gamma$ be a branch of $x$, stable or unstable, such that $\gamma \not\subset K$.
Then $\gamma$ intersects a connected component $V$ of $c_I\dot S \setminus K$ such that $x\in \partial V$.
The set $V$ is periodic since $f$ preserves $\mu$. By~\cite[Lemma~2.3]{Mather}, see also~\cite[page~457]{KLN}, $b_IV$ is finite.
By Corollary~\ref{cor_8.7_KLN}, $\partial V$ is a disjoint finite union of periodic points and aperiodic annular continua.
But the connected component of $\partial V$ containing $x$ is not an aperiodic continuum since it contains the periodic point $x$.
Hence $x$ is an isolated point of $\partial V$, in particular also of $K$, in contradiction to the fact that $K$ is a continuum.

Let $\gamma_1,\gamma_2$ be branches of $x$.
The closure $K$ of $\gamma_1$ in $c_I\dot S$ is a periodic continuum.
Hence, by what was proved above, $\gamma_2 \subset K$. 
It follows that the closure of $\gamma_2$ is contained in the closure of $\gamma_1$.
The same argument interchanging the roles of $\gamma_1$ and $\gamma_2$ concludes the proof.
\end{proof}

With the above results in place, we can consider a relation on the set of periodic points of $f$ contained in $\dot S$.
Note that, by assumption, all such periodic points are hyperbolic.
Similarly to~\cite[Section~3]{LS}, a periodic point $x_1$ is related to a periodic point $x_2$ if each of the four stable/unstable branches of $x_1$ has the same closure as any of the four branches of $x_2$.
This relation is clearly symmetric and transitive.
Reflexivity is non-trivial and follows from Corollary~\ref{cor_8.9_KLN}.
Hence, this is an equivalence relation.
The set of equivalence classes is denoted by $\mathcal{E}(f)$.
For a given $\kappa \in \mathcal{E}(f)$ we denote by $K(\kappa)$ the closure of one (hence any) of the four branches of a point in $\kappa$.

\begin{lemma}[Corollary~3.2 from~\cite{LS}]
\label{lemma_3.2_LS}
If $\kappa \in \mathcal{E}(f)$ and $x \in \dot S$ is a (hyperbolic) periodic point such that $x\in K(\kappa)$, then $x\in \kappa$.
\end{lemma}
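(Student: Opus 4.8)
The plan is to derive the statement from Corollary~\ref{cor_8.9_KLN}, combined with the structural description of the boundary of a periodic complementary component given by Corollary~\ref{cor_8.7_KLN} (which rests on the finiteness of ideal boundaries, \cite[Lemma~2.3]{Mather}). Fix $y \in \kappa$ and one of its four stable/unstable branches $\gamma$, so that by definition $K(\kappa) = {\rm cl}(\gamma)$, where ${\rm cl}$ denotes closure in $c_I\dot S$; in particular $y \in K(\kappa)$ and $K(\kappa)$ is a periodic continuum. Since $x$ belongs to the periodic continuum $K(\kappa)$, Corollary~\ref{cor_8.9_KLN} applied to $x$ tells us that $W^s(x) \cup W^u(x) \subset K(\kappa)$ and that the four branches of $x$ all have the same closure, a periodic continuum which I denote $K'$; thus $K' \subset K(\kappa)$. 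As $x \in \kappa$ is equivalent to $K' = K(\kappa)$, and $K(\kappa) = {\rm cl}(\gamma)$, the entire proof reduces to showing $\gamma \subset K'$.

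To prove $\gamma \subset K'$ I would argue by contradiction, after replacing $f$ by a suitable power so that $x$ and $y$ become fixed points, $\gamma$ becomes invariant, and $K'$ and $K(\kappa)$ remain invariant continua. First, $y \notin K'$: otherwise Corollary~\ref{cor_8.9_KLN} applied to $y$ inside the periodic continuum $K'$ would force every branch of $y$, in particular $\gamma$, into $K'$. Hence $y$ lies in a connected component $V$ of $c_I\dot S \setminus K'$, which is $f$-periodic because $y$ is periodic and $K'$ is invariant. By \cite[Lemma~2.3]{Mather} the ideal boundary $b_IV$ is finite, so Corollary~\ref{cor_8.7_KLN} applies: $\partial V$ is a finite disjoint union of aperiodic annular continua and periodic points, and in particular each of these pieces is clopen in $\partial V$. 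Now $\gamma$ meets $V$, since a neighbourhood of $y$ lies in $V$ and $y \in {\rm cl}(\gamma)$; but $\gamma$ cannot be contained in $V$, for otherwise $K' \subset K(\kappa) = {\rm cl}(\gamma) \subset {\rm cl}(V)$, and as $K' \cap V = \emptyset$ this would place the continuum $K'$ inside $\partial V$, hence inside a single piece of the partition; that piece contains the periodic point $x$, so it must be the singleton $\{x\}$, contradicting that $K'$ has more than one point. Being connected, meeting $V$ but not contained in $V$, $\gamma$ must meet $\partial V \subset K'$; choose $p \in \gamma \cap K'$.

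Since $\gamma$ and $K'$ are invariant and $K'$ is closed, the whole orbit of $p$ stays in $\gamma \cap K'$; and since $p$ lies on a branch of the fixed point $y$, this orbit converges to $y$ (backward in time if $\gamma$ is an unstable branch, forward if it is a stable branch). Hence $y \in K'$, contradicting $y \notin K'$. This contradiction gives $\gamma \subset K'$, whence $K' = K(\kappa)$ and therefore $x \in \kappa$. The steps I expect to demand the most care are the passage to a power of $f$ --- one must verify that the four branch-closures of $x$ and of $y$ are genuinely unchanged and become invariant, and that the component $V$ is honestly $f$-periodic so that \cite[Lemma~2.3]{Mather} and Corollary~\ref{cor_8.7_KLN} are applicable --- and the claim that the orbit of $p$ converges to $y$; both rely on the hyperbolicity of every periodic point of $f$ in $\dot S$, which is guaranteed here by the strong nondegeneracy of $\lambda$.
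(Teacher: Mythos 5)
Your argument is correct, and while it stays inside the same prime-ends framework (the machinery packaged in Lemma~\ref{lemma_thm_8.3_KLN}, Corollary~\ref{cor_8.7_KLN} and Corollary~\ref{cor_8.9_KLN}), it routes the contradiction differently from the paper. The paper takes a component $V$ of $c_I\dot S\setminus K(\kappa')$ meeting $K(\kappa)$, asserts that $x\in\partial V$, invokes the finiteness of $b_IV$ from \cite[Lemma~2.3]{Mather}, and applies Lemma~\ref{lemma_thm_8.3_KLN} directly: every impression $Z(p)$, $p\in b_IV$, is an annular continuum without periodic points, contradicting the fact that the periodic point $x$ lies in $\partial V=\bigcup_{p\in b_IV}Z(p)$. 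You instead work with the component $V$ containing $y\in\kappa$, rule out $\gamma\subset V$ via Corollary~\ref{cor_8.7_KLN} (the continuum $K'=K(\kappa')$ would be forced into the clopen boundary piece $\{x\}$), and otherwise pick $p\in\gamma\cap K'$ and let its orbit converge to $y$, so that $y\in K'$ and Corollary~\ref{cor_8.9_KLN} gives $\gamma\subset K'$, a contradiction either way. What your route buys is precisely the step the paper compresses: the paper's deduction that $x\in\partial V$ implicitly needs the branch $\gamma$ of $y$ to be disjoint from $K(\kappa')$ (so that $\gamma\subset V$ and $K(\kappa)\subset\overline V$), and your orbit-limit step combined with Corollary~\ref{cor_8.9_KLN} is exactly the justification of that fact; conversely, the paper's version is shorter because Lemma~\ref{lemma_thm_8.3_KLN} is applied once, with no case distinction and no detour through Corollary~\ref{cor_8.7_KLN}. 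The points you flag as delicate (passing to a power of $f$, periodicity of $V$, clopenness of the finitely many boundary pieces, and the fact that branch closures are unchanged under taking powers) all check out as you state them.
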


\begin{proof} 
Let $\kappa' \in \mathcal{E}(f)$ denote the class of $x$.
By Corollary~\ref{cor_8.9_KLN}, $K(\kappa') \subset K(\kappa)$.
Suppose by contradiction that  $K(\kappa) \not\subset K(\kappa')$.
Then there is a connected component~$V$ of $c_I\dot S \setminus K(\kappa')$ such that $K(\kappa) \cap V \neq \emptyset$.
In particular, $V$ intersects some branch of~$x$.
Choose $n$ such that $f^n(V)=V$ and $f^n(x)=x$.
It follows that $x \in \partial V$.
By~\cite[Lemma~2.3]{Mather}, see also~\cite[page~457]{KLN}, $b_IV$ is finite, in particular $b_IV$ consists of isolated points, and for every $p\in b_IV$ the set $Z(p)$ is not a point. 
It follows from Lemma~\ref{lemma_thm_8.3_KLN} that for every $p\in b_IV$ the set $Z(p)$ is an annular continuum with no periodic point. 
But for some $p$ the set $Z(p)$ contains $x$, and this contradiction proves that $K(\kappa) \subset K(\kappa')$.
We are done showing that $K(\kappa) = K(\kappa')$, from where it follows that $\kappa=\kappa'$.
\end{proof}

The above lemma was the last tool we needed to be able to conclude that the proof of~\cite[Proposition~5.1]{LS} can be reproduced~\textit{ipsis litteris} to establish the following statement.
We do not reproduce the argument here.

\begin{lemma}
\label{lemma_5.1_LS}
Let $g$ be the genus of $c_I\dot S$.
Every set of (necessarily hyperbolic) periodic points in $\dot S$ with strictly more than $2g$ elements contains at least one point that has a homoclinic connection.
\end{lemma}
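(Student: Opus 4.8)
The plan is to argue by contradiction. Suppose $P\subset\dot S$ is a set of periodic points of $f$ with $\#P>2g$, and that no point of $P$ has a homoclinic connection. Passing to a suitable iterate $f^n$, which still belongs to $\mathcal{G}(c_I\dot S,\mu)$ and for which every point of $P$ is fixed, I may assume without loss of generality that each $x\in P$ is fixed by $f$. For $x\in P$ let $\kappa(x)\in\mathcal{E}(f)$ be its class and $K(\kappa(x))$ the common closure of the four stable/unstable branches of any point of $\kappa(x)$. By Corollary~\ref{cor_8.9_KLN} all four branches of $x$ lie in $K(\kappa(x))$ and each of them has closure equal to $K(\kappa(x))$; in particular $x\in\overline{\gamma}$ for every branch $\gamma$ of $x$, so each branch of $x$ returns arbitrarily close to $x$. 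By Lemma~\ref{lemma_3.2_LS}, a hyperbolic periodic point of $f$ lies in $K(\kappa)$ only if it belongs to $\kappa$.

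Next I would turn each point of $P$ into a homology class. Fix $x\in P$, a small closed disk $D_x$ around $x$ in hyperbolic local coordinates, so that $D_x\cap W^s_{\mathrm{loc}}(x)$ and $D_x\cap W^u_{\mathrm{loc}}(x)$ form a topological cross, and an unstable branch $\gamma$ of $x$. Because $x\in\overline{\gamma}$ while $W^u(x)\cap W^s(x)=\{x\}$ by the absence of a homoclinic connection, the branch $\gamma\subset W^u(x)$ re-enters $D_x$ infinitely often along arcs that avoid $W^s_{\mathrm{loc}}(x)$. Following $\gamma$ from its first point on $W^u_{\mathrm{loc}}(x)$ out of $D_x$ and back to a deep return inside $D_x$, and then closing it up by a short arc inside $D_x$ that avoids $W^s_{\mathrm{loc}}(x)$ and $\gamma$ itself, produces an embedded loop $c_x\subset c_I\dot S$; this is the classical mechanism by which a recurrent separatrix forces a handle.

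The key claim is then that the classes $[c_x]\in H_1(c_I\dot S;\Q)$, for $x\in P$, are linearly independent; since $\dim H_1(c_I\dot S;\Q)=2g$, this gives $\#P\le 2g$, contradicting the hypothesis. This independence is exactly the content of~\cite[Proposition~5.1]{LS}, whose proof I would transcribe. The point is that the only inputs it requires, beyond soft prime-ends topology, are: (i) that branch-closures are periodic continua containing precisely the periodic points of their class, which is Corollary~\ref{cor_8.9_KLN} together with Lemma~\ref{lemma_3.2_LS}; (ii) that a complementary region $V$ of a branch-closure carrying a periodic point on its frontier has finite ideal boundary, so that Corollary~\ref{cor_8.7_KLN} describes $\partial V$ as a finite union of aperiodic annular continua and periodic points, forbidding a periodic point from being an isolated boundary point of a continuum — the mechanism already exploited in Lemmas~\ref{lemma_thm_8.3_KLN} and~\ref{lemma_3.2_LS}; and (iii) that a heteroclinic intersection between branches of two periodic points of $P$ cannot occur for free, which comes from the inclination lemma together with the invariance of $\mu$ and Poincar\'e recurrence. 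Granting (i)--(iii), a nontrivial relation $\sum_{x}a_x[c_x]=0$ is excluded by cutting $c_I\dot S$ along the relevant branch-closures and locating a periodic point that would then have to be isolated in a continuum.

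I expect the main obstacle to be item (iii) together with the global bookkeeping of how the branch-closures and their complementary regions fit together: ruling out spurious heteroclinic, hence homoclinic, connections among the points of $P$, and controlling the mutual position of the loops $c_x$, is what makes the homology count well posed. By contrast, the construction of the individual loops and the final linear-algebra step are routine.
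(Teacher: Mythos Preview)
Your proposal is correct and follows essentially the same approach as the paper. The paper's own treatment is even more terse: it simply states that, with Corollary~\ref{cor_8.9_KLN} and Lemma~\ref{lemma_3.2_LS} in hand, the proof of~\cite[Proposition~5.1]{LS} can be reproduced \textit{ipsis litteris}, and explicitly declines to reproduce the argument. Your outline of that argument --- constructing loops from recurrent separatrices and showing their homology classes are independent, with inputs (i)--(iii) supplied by the preceding lemmas --- is a faithful expansion of what that citation entails.
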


We can now finish the proof of Proposition~\ref{prop_no_elliptic_homoclinic}. 
Since the contact form $\lambda$ is nondegenerate, a result from~\cite{CDR} implies that there are either two or infinitely many periodic Reeb orbits.
If there are exactly two periodic orbits then the results from~\cite{CGHHL} imply that both are elliptic.
Since we work under the assumption that there are no elliptic periodic Reeb orbits, it follows that there are infinitely many such periodic orbits.
These give rise to infinitely many periodic points of $f$ in $\dot S$.
From Lemma~\ref{lemma_5.1_LS} we get the desired homoclinic connection.

\subsection{Proof of Theorem~\ref{thm: entropy}}

Fix a co-orientable contact structure on $M$.
Consider the set $\Lambda$ of contact forms defining $\xi$, equipped with the $C^\infty$-topology. 
This set is an open subset of the vector space $\Omega^1(M)$ equipped with the $C^\infty$-topology. 
The latter is a topological vector space whose topology can be defined by a complete metric.
Consider the set $\Lambda_+ \subset \Lambda$ of contact forms whose Reeb flows have positive topological entropy.
As mentioned before, $\Lambda_+$ is open, and we need to show that $\Lambda_+$ is dense in $\Lambda$.

Let $\Lambda_* \subset \Lambda$ denote the set of nondegenerate contact forms. 
As is well-known,~$\Lambda_*$ contains a countable intersection of open and dense subsets of $\Lambda$.
In particular, $\Lambda_*$ is dense in $\Lambda$.
Let  $\Lambda^e \subset \Lambda$ denote the subset of all  contact forms that admit at least one elliptic closed Reeb orbit.
Then $\Lambda^h = \Lambda_* \setminus \Lambda^e$ consists precisely of those contact forms on $\Lambda$ all of whose periodic Reeb orbits are hyperbolic.

Let $\lambda \in \Lambda_* \cap \Lambda^e$ and $\gamma$  an elliptic  periodic Reeb orbit of $\lambda$. The Poincar\'e map on a small section transverse to $\gamma$ can be described as a symplectic embedding $\varphi_0$ defined on an open neighborhood of $(0,0) \in \R^2$ into $\R^2$, such that $\varphi_0(0,0)=(0,0)$. 
Up to a linear symplectic change of coordinates, there is no loss of generality to assume that $D\varphi_0(0,0) = R_{\alpha_0}$ is rotation by an angle $\alpha_0 \in \R \setminus 2\pi\Q$. Therefore, we can write $\varphi_0 = R_{\alpha_0} \circ \psi_0$ for some symplectic map $\psi_0$ defined near the origin, satisfying $\psi(0,0)=(0,0)$, $D\psi_0(0,0)=I$. It follows that $\psi_0$ can be represented near $(0,0)$ by a generating function $u_0$. This means that
\begin{equation}
\label{gen_funct}
\psi_0(x,y)=(X,Y) \qquad \Leftrightarrow \qquad \left\{ \begin{aligned} X-x &= D_2u_0(x,Y) \\ y-Y &= D_1u_0(x,Y) \end{aligned} \right.
\end{equation}
Here $D_1$ and $D_2$ denote partial derivatives of $u_0$ with respect to first and second variables, respectively. The Hessian of $u_0$ at $(0,0)$ vanishes. 

Let $r \in \N$ be arbitrary. It follows~\cite[Corollary~1]{Zeh} that $u_0$ can be $C^r$-slightly perturbed on a small fixed neighborhood of~$(0,0)$ to a $C^r$-function~$u_1$, and that $\alpha_0$ can be slightly perturbed to $\alpha_1 \in \R \setminus 2\pi\Q$, in such a way that the following holds: If $\psi_1$ is the map induced by $u_1$ as in~\eqref{gen_funct}, then the map $\varphi_1 = R_{\alpha_1}\circ\psi_1$ has transverse homoclinic connections in every neighborhood of $(0,0)$.

Using bump functions one constructs a $C^r$ function $u$ defined on a small open neighborhood of $(0,0)$, that agrees with $u_1$ near $(0,0)$ and with $u_0$ outside of a small neighborhood of $(0,0)$, whose $C^r$-distance to $u_0$ is controlled by the $C^r$-distance between $u_1$ and $u_0$. Such a function can be used to construct a symplectic $C^r$-embedding $\varphi$ defined on a small open ball centered at $(0,0)$ that agrees with $\varphi_1$ near the origin and with $\varphi_0$ outside of a neighborhood of the origin. The map $\varphi$ can be embedded as Poincar\'e map of an elliptic periodic orbit of a contact form $\lambda'$ of class $C^{r+1}$.
By construction, the contact form $\lambda'$ has a hyperbolic periodic Reeb orbit with a homoclinic connection.
Hence, $\lambda'$ can be $C^{r+1}$-slightly perturbed to a smooth contact form $\lambda''$ that has a hyperbolic periodic Reeb orbit with a homoclinic connection.
In particular $\lambda'' \in \Lambda_+$.
Since $r\geq1$ is arbitrary and the size of the perturbations can be taken arbitrarily small, we find via the above process a sequence of smooth contact forms $\lambda_n \in \Lambda_+$ such that $\lambda_n \to \lambda$ in $C^\infty$.



Now let $\lambda \in \Lambda\setminus \Lambda^e$.
There is a sequence of contact forms $\lambda_n$ satisfying $\lambda_n \to \lambda$ in $C^\infty$, such that every $\lambda_n$ is strongly nondegenerate and has a $\partial$-strong Birkhoff section for its Reeb flow.
This is so because both properties are $C^\infty$-generic; the latter property holds in an open and dense set of contact forms in the $C^\infty$-topology by Corollary~\ref{cor_generic_existence}. 
If there exists a subsequence $n_j \to \infty$ such that $\lambda_{n_j} \in \Lambda_e$ then, by what was observed so far, $\lambda$ is a limit of contact forms in $\Lambda_+$.
Hence, it remains to deal with the case where there exists $N \in \N$ such that for every $n\geq N$ the contact form $\lambda_n$ is strongly nondegenerate, its Reeb flow has a $\partial$-strong Birkhoff section, and all periodic Reeb orbits are hyperbolic.
Proposition~\ref{prop_no_elliptic_homoclinic} implies that for every $n\geq N$ the Reeb flow of $\lambda_n$ has a hyperbolic periodic orbit with a homoclinic connection, which in turn implies that $\lambda_n \in \Lambda_+$.
The proof of Theorem~\ref{thm: entropy} is complete.

\appendix

\section{A result in Schwartzman-Fried-Sullivan theory}
\label{app_SFS}

In this appendix $M$ is a closed, connected and oriented $3$-manifold, and $X$ is a smooth vector field on $M$.
The flow of $X$ is denoted by $\phi^t$.
Let $L \subset M$ be a link formed by periodic orbits. 
As before, we denote by $\P_\phi(M \setminus L)$ the set of $\phi$-invariant Borel probability measures on $M \setminus L$.
The statement below can be found as Theorem~2.3 in the extended arXiv version of~\cite{icm}, but its proof was not available in the literature as far as we know, except for some arguments sketched by Ghys~\cite{ghys}, and for a version stated in terms of homology directions by Fried~\cite{fried}.

\begin{theorem}
\label{thm_existence_from_SFS}
Suppose that there exists $y \in H^1(M \setminus L;\R)$ satisfying 
\begin{itemize}
\item $\mu \cdot y > 0$ for all $\mu \in \P_\phi(M \setminus L)$.
\item $\rho^y(\gamma) > 0$ for all $\gamma \subset L$.
\end{itemize}
Then there exists a $\partial$-strong Birkhoff section for $\phi^t$ whose boundary components are contained in $L$.
\end{theorem}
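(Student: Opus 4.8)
The plan is to reduce the statement to the existence of a genuine global surface of section for an auxiliary flow on a compact manifold with boundary, via the blow-up construction of Section~\ref{ssec_blowing_up}, and then to run a Schwartzman-type duality argument to produce a closed $1$-form transverse to that auxiliary flow. First I would blow $L$ up to obtain the compact $3$-manifold $D_L$ with $\partial D_L=\bigsqcup_{\gamma\subset L}\Sigma_\gamma$, the extended flow $\phi_L^t$ tangent to $\partial D_L$, and the collapsing map $\pi:D_L\to M$, exactly as in Sections~\ref{ssec_blowing_up} and~\ref{section: open}. Using $D_L\setminus\partial D_L=M\setminus L$ I identify $H^1(D_L;\R)\simeq H^1(M\setminus L;\R)$ and regard $y$ as a class on $D_L$. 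The first real step is to convert the two hypotheses into the single statement that $\mu\cdot y>0$ for every $\mu\in\P_\phi(D_L)$: given such a $\mu$, write it as a convex combination of its normalized restrictions to the invariant sets $M\setminus L$ and to each $\Sigma_\gamma$; the first term pairs positively with $y$ by the first hypothesis, and on $\Sigma_\gamma$ one has $\mu\cdot y=\frac{2\pi}{T_\gamma}\rho^y(\gamma)>0$ by Lemma~\ref{lemma_rot_numbers_measures} and the second hypothesis. Since $\P_\phi(D_L)$ is weak$^*$-compact and $\mu\mapsto\mu\cdot y$ is weak$^*$-continuous, $\inf_\mu\mu\cdot y=2\epsilon$ for some $\epsilon>0$. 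This already sidesteps the language of homology directions used by Fried.

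The heart of the argument --- and where I expect the main difficulty to lie --- is then functional-analytic: I claim $y$ admits a smooth closed representative $\beta$ with $\iota_{X_L}\beta>0$ everywhere on $D_L$. Fix any smooth closed representative $\beta_0$ and put $h_0=\iota_{X_L}\beta_0\in C^0(D_L)$; replacing $\beta_0$ by $\beta_0-du$ changes $h_0$ into $h_0-X_L(u)$, so it suffices to find a smooth $u$ with $h_0-X_L(u)\ge\epsilon$. Let $\mathcal B\subset C^0(D_L)$ be the sup-norm closure of the space of continuous coboundaries $\{X_L(u):u\in C^\infty(D_L)\}$; since a signed measure $\nu$ annihilates every $X_L(u)$ precisely when it is $\phi_L$-invariant, the annihilator of $\mathcal B$ is exactly the space of invariant signed measures. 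Because the cone of nonnegative functions has nonempty interior in $C^0(D_L)$, a Hahn--Banach separation argument shows $h_0-\epsilon\in\mathcal B+\{f\in C^0(D_L):f\ge0\}$: otherwise a separating functional would be a nonzero nonnegative invariant measure $\nu$ with $\nu(h_0)\le\epsilon\|\nu\|$, contradicting $\inf_\mu\mu\cdot y=2\epsilon$ after normalization. Thus $h_0=\bar b+f$ with $f\ge\epsilon$ and $\bar b=\lim X_L(u_n)$ uniformly, and $u=u_n$ works for $n$ large. The delicate points here are the identification of $\mathcal B^\perp$ with invariant measures and the careful bookkeeping of the separation.

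Next I would turn $\beta$ into a section. Rational classes are dense in $H^1(D_L;\R)$, so I pick a rational class $y'$ close to $y$ with a representative $\beta'$ that is $C^0$-close to $\beta$, hence still satisfies $\iota_{X_L}\beta'>0$; then I choose $k\in\N$ so that $k\beta'$ has integral periods and let $u':D_L\to\R/\Z$ satisfy $du'=k\beta'$. Since $du'=k\beta'$ is nowhere zero (its contraction with $X_L$ is positive), $u'$ is a submersion, hence a locally trivial fibration over the circle whose fibres are all transverse to $X_L$ and nonempty. Moreover $\beta'|_{\Sigma_\gamma}$ is nowhere zero as a covector --- if it vanished at $p$ then $\iota_{X_L}\beta'(p)=0$ since $X_L(p)\in T_p\Sigma_\gamma$ --- so $u'|_{\Sigma_\gamma}$ is a submersion as well; therefore $S:=(u')^{-1}(c)$ is, for every $c$, an embedded global surface of section for $\phi_L^t$ on $D_L$, meeting each $\Sigma_\gamma$ transversely in a nonempty union of circles transverse to $\phi_L^t|_{\Sigma_\gamma}$, and every orbit returns to it in uniformly bounded forward and backward time by compactness of $D_L$. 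No Sard argument is needed.

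Finally I would check that $\pi|_S:S\to M$ is a $\partial$-strong Birkhoff section with boundary in $L$. On $\dot S=S\setminus\partial S\subset M\setminus L$ the map $\pi$ is a diffeomorphism, so $\pi|_{\dot S}$ is an embedding transverse to $X$, and $\pi^{-1}(\pi(\partial S))=\partial S$; the components $c\subset\partial S$ collapse onto components of $L$, covering $\gamma$ with degree equal (up to a positive factor) to the period of $\beta'$ over a meridian of $\gamma$ --- when this vanishes, $S$ simply meets $\Sigma_\gamma$ in meridional circles that collapse to points through which orbits cross $\pi(S)$ transversely, so $\gamma$ is not a boundary component but the conclusion still holds since the boundary is then a proper sublink of $L$. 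Conditions (i)--(iii) of Definition~\ref{defn_BS_GSS} follow from the corresponding properties of $S$ in $D_L$, using that $D_L\setminus\partial D_L$ is $\phi_L$-invariant. Finally, by the analysis of Section~\ref{section: open} the trace of $\pi|_S$ along each $c\subset\partial S$ is, under $\Sigma_\gamma\simeq\mathbb P^+\gamma$, the embedded circle $c$ itself, which is transverse to and --- by compactness of $\Sigma_\gamma$ --- a global section of the linearized flow on $\mathbb P^+\gamma$, so $\pi|_S$ is $\partial$-strong. I expect this last paragraph to be the routine-but-lengthy part, with the genuine conceptual obstacle being the Hahn--Banach/Schwartzman step that produces the transverse closed form.
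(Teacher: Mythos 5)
Your overall route is the same as the paper's (Appendix~\ref{app_SFS}): blow $L$ up, merge the two hypotheses into the single statement that $\mu\cdot y>0$ for every $\mu\in\P_\phi(D_L)$ by splitting an invariant measure into its interior part and its parts on the tori $\Sigma_\gamma$ and invoking Lemma~\ref{lemma_rot_numbers_measures}, produce a closed $1$-form representing $y$ with $\iota_{X_L}\beta>0$ on all of $D_L$, rationalize the class, and take a fiber of the resulting circle-valued submersion. Your middle step is packaged differently: instead of Sullivan's cone of structure currents, the compact base $K$, and the separation theorem of \cite{Hry_SFS} used in the paper (Lemma~\ref{lemma_analysis}), you separate $h_0-\epsilon$ in $C^0(D_L)$ from the sum of the closure of the coboundaries $\{X_L(u)\}$ and the nonnegative cone, identifying the annihilator of the coboundaries with invariant signed measures and using the uniform bound coming from weak$^*$ compactness of $\P_\phi(D_L)$. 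That variant is correct and arguably more elementary than the currents formalism; both arguments deliver the same object, a closed representative with positive pairing against $X_L$ up to the boundary.

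The genuine gap is in the last step, where you assert that $\pi|_S$ for $S=(u')^{-1}(c)$ is already a $\partial$-strong Birkhoff section and that conditions (i)--(iii) of Definition~\ref{defn_BS_GSS} ``follow from the corresponding properties of $S$ in $D_L$''. They do not, because the collapsing map kills the $\partial_\theta$-direction along $\partial D_L$: at a point $p$ of a boundary circle $c=S\cap\Sigma_\gamma$ the differential of $\pi|_S$ has rank $2$ only if $dt$ does not vanish on $T_pc$, and a fiber of $u'$ is only known to be transverse to the flow on $\Sigma_\gamma$, not to the meridians $\{t=\mathrm{const}\}$. This is exactly why the paper deforms $S$ near each $\Sigma_\gamma$ before collapsing: when the longitudinal degree $n_1$ of $c$ is nonzero one arranges $dt\neq0$ along $c$ so that the collapsed map is an immersion covering $\gamma$ with degree $|n_1|$; when $n_1=0$ one flattens $S$ into annuli $\{t=t_*,\ r\in[0,\epsilon)\}$ and must also change the domain (cap off the meridional boundary circles), since otherwise $\pi|_S$ sends whole components of $\partial S$ to points, so it is neither an immersion nor does it map $\partial S$ onto a link, even though the image is the ``right'' surface. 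Your treatment of the meridional case records only that $\gamma$ drops out of the boundary, not these modifications. Relatedly, your justification of the $\partial$-strong property is wrong as stated: an embedded circle transverse to a nonsingular flow on a torus is not a global section ``by compactness'' --- a longitudinal circle $\{\theta=\theta_0\}$ is transverse to $\partial_t+b(\theta)\partial_\theta$ yet misses the closed orbits $\{b=0\}$. What actually makes each boundary circle a global section of the linearized flow is the positivity $\iota_{X_L}\beta'>0$ on $\Sigma_\gamma$: the map $u'|_{\Sigma_\gamma}$ increases at a definite rate along the linearized flow, so every orbit crosses the fiber over $c$, and since the components of that fiber cut $\Sigma_\gamma$ into annuli traversed in cyclic order, every orbit meets each component. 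This fact is available in your setup, but it --- and the boundary deformation --- must be invoked explicitly to close the argument.
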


\begin{remark}
A refinement of Theorem~\ref{thm_existence_from_SFS} with conditions for global surfaces of section representing a prescribed homology class can be found in~\cite{Hry_SFS}.
\end{remark}

\begin{remark}
Some components of $L$ may not be contained in the boundary of the  Birkhoff section provided by Theorem~\ref{thm_existence_from_SFS}.
\end{remark}

\subsection{Preliminaries}

\subsubsection{Blowing periodic orbits up}

As proved in Section~\ref{ssec_blowing_up}, we can blow $L$ up and construct a new manifold without boundary~$M_L$~\eqref{blown_up_manifold}, containing a special smooth compact domain $D_L$~\eqref{domain_D_L}.
The domain $D_L \subset M_L$ is the closure of $M\setminus L$ in $M_L$.
Moreover, $X$ can be extended from $M \setminus L$ to $M_L$ as a smooth vector field $X_L$. 
The vector field $X_L$ is not unique, but its restriction to $D_L$ is unique. 
Moreover, $X_L$ is tangent to $\partial D_L$ and the dynamics on $\partial D_L$ captures the linearized dynamics along the components of $L$ in a precise way described in Section~\ref{ssec_blowing_up}.

For later purposes we need to fix some notation.
Denote the components of $L$ by $\gamma_1,\dots,\gamma_h$ and their primitive periods by $T_j>0$. 
For each $j=1,\dots,h$, $\Sigma_j \subset \partial D_L$ denotes the boundary torus associated to the end of $M\setminus L$ near $\gamma_j$.

\subsubsection{Schwartzman cycles and structure currents}

For every $p\geq 0$ one can turn $\Omega^p(M_L)$ into a topological vector space by equipping it with the $C^\infty_{\rm loc}$-topology.
Its topological dual is denoted by $C_p = \Omega^p(M_L)'$ and equipped with the weak* topology. 
As mentioned before, an element of $C_p$ is a $p$-current with compact support.
Denote by $C_p'$ the topological dual of $C_p$ equipped with its weak* topology. 
The map $\Omega^p(M_L) \to C_p'$ given by $\omega \mapsto \left<\cdot,\omega\right>$ is a linear homeomorphism, in other words $\Omega^p(M_L)$ is reflexive. 
There is a boundary operator $\partial : C_{p+1} \to C_p$ defined as the adjoint of the exterior derivative $d:\Omega^p(M_L) \to \Omega^{p+1}(M_L)$. 
A current in $C_p$ is called a cycle if it is in the kernel of $\partial :C_p \to C_{p-1}$, and is called a boundary if it is in the image of $\partial: C_{p+1} \to C_p$. 
The space of boundaries is denoted by $B_p$, the space of cycles by $Z_p$, and we have $H_p(M_L;\R) = Z_p/B_p$.

Consider the set $\P(M_L)$ of compactly supported finite Borel measures on~$M_L$, and let $\P_{\phi_L}(D_L) \subset \P(M_L)$ be the subset of those which are $\phi_L$-invariant probability measures supported on~$D_L$. 
Any $\mu \in \P(M_L)$ defines a $1$-current $c_\mu \in C_1$ by the formula 
\[
\left< c_\mu,\omega \right> = \int_{M_L} \omega(X_L) \ d\mu, \qquad \omega\in\Omega^1(M_L) \, .
\]
We follow Sullivan's notation and write $c_\mu = \int_{M_L} X_L \ d\mu$. 
If $\mu$ is supported on~$D_L$ then $c_\mu$ is a cycle if, and only if, $\mu$ is $\phi_L$-invariant. 
The elements of the set 
\begin{equation*}
\s_X := \{ c_\mu \mid \mu\in \P_{\phi_L}(D_L) \} \subset Z_1
\end{equation*}
will be called {\it Schwartzman cycles}.
The {\it Dirac currents} $\delta_p\in C_1$, $p\in M_L$, are defined by $\left<\delta_p,\omega\right> = \omega(X_L)|_p \in \R$. 
Let $\Cc\subset C_1$ denote the closed convex cone generated by $\{\delta_p \mid p\in D_L\}$. 
In~\cite{sullivan} $\Cc$ is called the {\it cone of structure currents in $D_L$}.
The following lemma summarizes the analytical facts we need.

\begin{lemma}[\cite{Hry_SFS}, Lemma~3.3 and Lemma~3.4]
\label{lemma_analysis}
The following hold.
\begin{itemize}
\item[(I)] There exists $\omega \in \Omega^1(M_L)$ such that $\left< c,\omega \right>>0$ for every $c\in \Cc\setminus\{0\}$.
\item[(II)] If $\omega$ is as in (I) then the convex set $K = \{ c\in\Cc \mid  \left< c,\omega \right>=1\}$ is compact.
\item[(III)] For every $c\in\Cc$ there exists a unique finite Borel measure~$\mu$ on $D_L$ such that $c=\int_{D_L}X_L \ d\mu$.
\end{itemize}
\end{lemma}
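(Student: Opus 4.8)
The plan is to treat the three items in the order (I), (III), (II): the common ingredient is a single ``test form'' $\omega$, and once the representation of structure currents by Borel measures asserted in (III) is available, both the strict positivity in (I) and the compactness in (II) drop out quickly. For (I), I would first note that since by definition of $\Cc$ the pairing $\left<c,\omega\right>$ is a weak* limit of sums $\sum_i a_i\,\omega(X_L)|_{p_i}$ with $a_i\geq0$ and $p_i\in D_L$, it is enough to produce a smooth $1$-form $\omega$ on $M_L$ with $\omega(X_L)>0$ at every point of the compact domain $D_L$; this gives $\left<c,\omega\right>\geq0$ for all $c\in\Cc$ at once. Such a form I would build by the usual partition-of-unity argument: $X_L$ is nonsingular, so near each point of $D_L$ there are flow-box coordinates in which $X_L=\partial_{x_1}$ and $\omega=dx_1$ is a local solution, and since the covectors $\alpha$ at a point $p$ with $\alpha(X_L|_p)>0$ form an open half-space, hence a convex set, a locally finite convex combination of these local models is again positive on $X_L$.

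For (III), fix $\omega$ as above and set $m=\min_{D_L}\omega(X_L)>0$. Given $c\in\Cc$, I would approximate it in weak* by atomic currents $c_k=\sum_i a_i^k\delta_{p_i^k}$, which are precisely $\int_{D_L}X_L\,d\mu_k$ for the atomic measures $\mu_k=\sum_i a_i^k\delta_{p_i^k}$; the a priori bound $\mu_k(D_L)\leq m^{-1}\left<c_k,\omega\right>$ together with $\left<c_k,\omega\right>\to\left<c,\omega\right><\infty$ shows the $\mu_k$ may be taken of uniformly bounded mass. Since $D_L$ is a compact metric space such a family is weak* precompact in $C^0(D_L)'$ by Banach--Alaoglu, and extracting a weak* convergent subsequence $\mu_k\to\mu$ and testing against $\omega'(X_L)|_{D_L}\in C^0(D_L)$ for arbitrary $\omega'\in\Omega^1(M_L)$ gives $c=\int_{D_L}X_L\,d\mu$. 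Uniqueness I would get from the fact that the functions $\omega'(X_L)|_{D_L}$, as $\omega'$ ranges over $\Omega^1(M_L)$, exhaust $C^\infty(D_L)$ --- patch the local solutions $g\,dx_1$ from flow boxes by a partition of unity --- hence span a dense subspace of $C^0(D_L)$, so any two representing measures integrate every continuous function identically and coincide by the Riesz representation theorem. The strict inequality in (I) is then immediate: for $c\in\Cc\setminus\{0\}$ one has $c=\int_{D_L}X_L\,d\mu$ with $\mu\neq0$, so $\left<c,\omega\right>=\int_{D_L}\omega(X_L)\,d\mu>0$.

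For (II), I would use (III) to write $c=\int_{D_L}X_L\,d\mu_c$ for $c\in K$; the normalization $1=\left<c,\omega\right>=\int_{D_L}\omega(X_L)\,d\mu_c\geq m\,\mu_c(D_L)$ forces $\mu_c(D_L)\leq m^{-1}$, whence $|\left<c,\omega'\right>|\leq m^{-1}\sup_{D_L}|\omega'(X_L)|<\infty$ for each fixed $\omega'\in\Omega^1(M_L)$; that is, $K$ is weak* bounded. Since $\Omega^1(M_L)$ with the $C^\infty_{\rm loc}$-topology is a separable Fr\'echet space, hence barrelled, weak* bounded subsets of its dual are equicontinuous, so by Banach--Alaoglu the weak* closure of $K$ is compact; and $K$ is weak* closed, being the intersection of the closed cone $\Cc$ with the closed affine hyperplane $\{\left<\cdot,\omega\right>=1\}$, so $K$ is weak* compact. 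Equivalently, bounded subsets of the dual of a separable Fr\'echet space are weak* metrizable, so one may run the argument with sequences, extracting weak* limits of the associated representing measures exactly as in (III).

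The one point I would single out as the main obstacle is the topological bookkeeping shared by (II) and the existence half of (III): converting weak* boundedness into relative weak* compactness in (II) requires that $\Omega^1(M_L)$ be barrelled, which is a consequence of its being Fr\'echet, together with Banach--Alaoglu; and in (III) one should stay inside mass-bounded slices, which are weak* compact (hence metrizable), rather than work with the full, possibly non-metrizable, weak* topology on the space of currents. Granting these, the rest is elementary --- partitions of unity, flow-box coordinates, and the Riesz representation theorem --- and all of it is carried out inside the compact domain $D_L$, so the non-compactness of $M_L$ causes no difficulty beyond making $\Omega^1(M_L)$ a Fr\'echet rather than a Banach space.
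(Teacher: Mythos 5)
Your argument is correct, and it is essentially the standard Sullivan-type argument underlying the cited Lemmas 3.3--3.4 of~\cite{Hry_SFS} (the paper itself only quotes the statement), with one organizational twist: you establish the measure representation (III) first, by approximating a structure current by nonnegative atomic combinations of Dirac currents with uniformly bounded mass and passing to a weak* limit of the associated measures, and then read off both the strict positivity in (I) and the compactness in (II) from that representation; the cited route instead gets (I)--(II) from the compact base of the cone (normalized Dirac currents, equicontinuity, Alaoglu--Bourbaki) and obtains the representing measure by feeding the positive functional $\omega'(X_L)|_{D_L}\mapsto\left<c,\omega'\right>$ into the Riesz representation theorem. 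The two routes rest on the same facts, and the points where a gap could hide are handled correctly on your side: the functions $\omega'(X_L)|_{D_L}$ do exhaust $C^\infty$ near $D_L$ (flow boxes plus a partition of unity), which gives both the surjectivity needed for uniqueness and the construction of $\omega$; and the passage from net-closure of the cone to sequential approximation is legitimate because, once the pairing with $\omega$ is used to bound the masses, one is inside an equicontinuous subset of $\Omega^1(M_L)'$, on which the weak* topology is compact and metrizable since $\Omega^1(M_L)$ is a separable Fr\'echet (hence barrelled) space --- exactly the bookkeeping you flag in your last paragraph. The only imprecision is the phrase suggesting that weak* compactness of the mass-bounded slice is what yields metrizability; it is separability of the predual on equicontinuous sets that does, but this does not affect the argument.
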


\subsection{Transverse foliations}

Let $\beta$ be a closed $1$-form $\beta\in\Omega^1(M_L)$ that represents~$y$ on $M\setminus L = D_L \setminus \partial D_L$.
We claim that 
\begin{equation}\label{basic_assumption}
\left< c_\mu,\beta \right> > 0 \qquad \forall \mu\in \P_{\phi_L}(D_L) \, .
\end{equation}
To see this, let $\mu\in\P_{\phi_L}(D_L)$ be arbitrary. For every Borel set $E\subset M_L$ define
\begin{equation*}
\mu_j(E) = \mu(E\cap \Sigma_j) \qquad \dot\mu(E) = \mu(E\cap (D_L\setminus \partial D_L)) = \mu(E\cap (M\setminus L)) \, .
\end{equation*}
Then $\dot\mu$ and the $\mu_j$ are $\phi_L$-invariant Borel measures, and $\mu=\dot\mu+\sum_j\mu_j$. We have
\begin{equation}\label{sum_decomp_measures}
\left< c_\mu,\beta \right> = \int_{M_L} \beta(X_L) \ d\mu = \int_{M_L} \beta(X_L) \ d\dot\mu + \sum_{j=1}^h \int_{M_L} \beta(X_L) \ d\mu_j \, .
\end{equation}
If $\mu_j(M_L) = \mu(\Sigma_j)>0$ then $\mu_j/\mu_j(M_L) \in \P_{\phi_L}(D_L)$ is supported on $\Sigma_j$. 
By Lemma~\ref{lemma_rot_numbers_measures} and the hypotheses of Theorem~\ref{thm_existence_from_SFS} 
\[
\int_{M_L} \beta(X_L) \ d\mu_j = \int_{\Sigma_j} \beta(X_L) \ d\mu_j = \frac{2\pi}{T_j} \mu_j(M_L) \rho^y(\gamma_j) > 0 \, .
\]
If $\dot\mu(M_L) = \mu(M\setminus L) >0$ then $\dot\mu/\dot\mu(M_L)$ induces an element of $\P_\phi(M\setminus L)$. 
By the hypotheses of Theorem~\ref{thm_existence_from_SFS}
\[
\int_{M_L} \beta(X_L) \ d\dot\mu = \int_{M\setminus L} \beta(X) \ d\dot\mu > 0 \, . 
\]
Thus each term in the sum~\eqref{sum_decomp_measures} is non-negative, and at least one term is positive since $\mu(M_L)=\mu(D_L)=1$. We have established~\eqref{basic_assumption}.

By (I) and (II) in Lemma~\ref{lemma_analysis} there exists $\omega \in \Omega^1(M_L)$ such that $\left<\cdot,\omega\right> > 0$ on $\Cc\setminus\{0\}$, and $K = \{c\in\Cc \mid \left<c,\omega\right>=1 \}$ is compact and convex in $C_1$. 
Let $c\in\Cc\setminus\{0\}$ be a cycle. 
By (III) in Lemma~\ref{lemma_analysis}, $c=\int_{M_L}X_L \ d\nu$ for some positive finite Borel measure~$\nu$ supported on~$D_L$, and $\nu$ must be invariant because $c$ is a cycle. 
In other words, $\mu := \nu/\nu(M_L) \in \P_{\phi_L}(D_L)$ and $c = \nu(M_L) c_\mu$ for a Schwartzman cycle $c_\mu$. 
From~\eqref{basic_assumption} we conclude that $\left<c,\beta\right> = \nu(M_L) \left< c_\mu,\beta \right> > 0$. In particular $\Cc \cap B_1 = \{0\}$, or equivalently $K \cap B_1 = \emptyset$, and $\beta$ evaluates positively on $K\cap Z_1$. By~\cite[Theorem~A.1]{Hry_SFS} we find $\eta_0 \in C_1'$ that vanishes on $B_1$, is positive on $K$, and agrees with $\beta$ on $Z_1$. By reflexivity $C_1'=\Omega^1(M_L)$ we conclude that $\eta_0$ is a $1$-form, and as such it must be closed since it vanishes on $B_1$. Moreover, ${\eta_0}|_{M\setminus L}$ represents~$y$ since it agrees with $\beta$ on $Z_1$. Finally, note that
\begin{equation}
\label{crucial_pointwise_ineq_SFS}
\eta_0(X_L)|_p = \left<\delta_p,\eta\right> > 0 \qquad \forall \ p \in D_L
\end{equation}
because $\left<\cdot,\eta_0\right>>0$ on $\Cc\setminus\{0\}$.

The kernel of $\eta_0$ integrates to a foliation transverse to $X_L$, and hence to $X$ on $M\setminus L$, but in general this foliation might behave very badly. 
In the next Section we shall deal with this issue by well-known arguments.

\subsection{Birkhoff sections}

Note that $H^1(M_L;\R) \simeq H^1(M\setminus L;\R)$ is a finite dimensional vector space. 
Hence, in view of~\eqref{crucial_pointwise_ineq_SFS}, we can find arbitrarily $C^\infty_{\rm loc}$-close to~$\eta_0$ a $1$-form $\eta_1$ on $M_L$ which is closed, has rational periods, and still satisfies~\eqref{crucial_pointwise_ineq_SFS}. 
Consider inclusions $\iota:D_L\hookrightarrow M_L$ and $\iota_j:\Sigma_j \hookrightarrow M_L$. 
The class $[\iota^*\eta_1] \in H^1(D_L;\R)$ is close to~$y$, and induces a class in $H^1(D_L;\Q)$.
Choose $m\in \N$ such that $\eta := m\eta_1$ has integer periods.
Note that $\eta$ satisfies~\eqref{crucial_pointwise_ineq_SFS}, that is
\begin{equation}
\label{crucial_pointwise_ineq_SFS_final}
\eta(X_L)|_p = \left<\delta_p,\eta\right> > 0 \qquad \forall \ p \in D_L \, .
\end{equation}
Denote by $N \in \N$ a generator of the group of periods of $\eta$. 
Then, after arbitrarily choosing a point $p_0 \in M \setminus L$, we can define a map ${\rm pr}:D_L \to \R/\Z$ by setting ${\rm pr}(p)$ to be the integral of $N^{-1}\iota^*\eta$ along any path from $p_0$ to $p$, modulo $\Z$:
\[
{\rm pr}(p) = \frac{1}{N} \int_{p_0}^p \iota^*\eta \mod \Z \, .
\]
The map ${\rm pr}:D_L \to \R/\Z$ is a smooth surjective submersion in view of~\eqref{crucial_pointwise_ineq_SFS_final}.
It follows from this construction that if $c:S^1 \to M\setminus L$ is a smooth loop then
\begin{equation}
\label{periods_formula}
\frac{1}{N} \int_c\eta = \text{degree of ${\rm pr}\circ c$} \, .
\end{equation}

The preimages ${\rm pr}^{-1}(x)$ are the leaves of a foliation of $D_L$ obtained by integrating $\ker \iota^*\eta$. 
Each ${\rm pr}^{-1}(x)$ is a compact embedded submanifold of $D_L$ that intersects the boundary $\partial D_L$ cleanly, since it is transverse to $X_L$ and $X_L$ is tangent to $\partial D_L$. 
It follows that ${\rm pr}^{-1}(x) \subset D_L$ is a smooth embedded surface transverse to $X_L$ with boundary equal to $\pr^{-1}(x)\cap \partial D_L$. 
Each leaf ${\rm pr}^{-1}(x)$ can be co-oriented by the vector field $X_L$, and can also be co-oriented by pulling back the canonical orientation of $\R/\Z$ via the map ${\rm pr}$. 
These co-orientations coincide by construction. 
Note that all trajectories in $D_L$ will hit all leaves ${\rm pr}^{-1}(x)$ in finite time, both in the future and in the past: this follows from compactness of $D_L$ and from~\eqref{crucial_pointwise_ineq_SFS_final}. 
In particular, for all $x\in\R/\Z$ the surface ${\rm pr}^{-1}(x)$ is a global surface of section for the flow of $X_L$ on $D_L$.

In a final step we modify one given fiber of the map ${\rm pr}$ to obtain a Birkhoff section for the flow of $X$ on $M$.
Let us fix $x \in \R/\Z$ arbitrarily.
For each $j \in \{1,\dots,h\}$, we choose coordinates $(t,\theta) \in \R/T_j\Z \times \R/2\pi\Z \simeq \Sigma_j$ as explained in Section~\ref{ssec_blowing_up}.
These can be used to write a basis $\{dt,d\theta\}$ of $H^1(\Sigma_j;\R)$.
Note that $N^{-1}\iota_{j}^*\eta$ is homologous to $a_1 \ dt/T_j + a_2 \ d\theta/2\pi$ for some $a_1,a_2\in\Z$. 
Assume that ${\rm pr}^{-1}(x) \cap \Sigma_j \neq \emptyset$, and let us consider a connected component $\alpha$ of ${\rm pr}^{-1}(x) \cap \Sigma_j$, oriented as part of the boundary of ${\rm pr}^{-1}(x)$.
Then $\alpha$ is non-trivial in $H_1(\Sigma_j;\Z)$, since otherwise it would bound a disk $D \subset \Sigma_j$ with $X_L \pitchfork \partial D$ thus forcing a singularity of $X_L$ on $\Sigma_j$. 
Let $\{e_1,e_2\}$ be the basis in $H_1(\Sigma_j;\Z)$ dual to $\{dt/T_j,d\theta/2\pi\}$ and write $\alpha = n_1e_1 + n_2e_2$ in homology. 
We already know that $(n_1,n_2) \neq (0,0)$.
Since $\alpha$ is an embedded loop in $\Sigma_j$ we also know that $(n_1,n_2)$ is primitive: if $d \in \N$ and $(n_1,n_2)/d \in \Z\times \Z$ then $d=1$.
Moreover, $0 = N^{-1} \int_\alpha \iota_j^*\eta = a_1n_1 + a_2n_2$.
The numbers $n_1,n_2$ depend only on $j$ and not on the choice of $\alpha$.
Using the transversality of the flow, we can deform ${\rm pr}^{-1}(x)$ near ${\rm pr}^{-1}(x) \cap \Sigma_j$ to obtain a surface $S$ that intersects $\partial D_L$ transversely, is transverse to $X_L$ up to the boundary, still is a global section for the flow on~$D_L$, and such that the following holds: if $n_1=0$ then $S$ coincides with finitely many annuli of the form $\{t=t_*, \ r\in[0,\epsilon)\}$ near $\Sigma_j$; if $n_1\neq0$ then $dt$ does not vanish tangentially to $S \cap \Sigma_j$. 
Now we collapse each $\Sigma_j$ to $\gamma_j$ to obtain again the manifold~$M$ from~$D_L$. 
This collapsing map can be defined by $(t,r,\theta) \mapsto (t,re^{i\theta})$ near each $\Sigma_j$.
In this process $S$ gets map to a Birkhoff section for the flow of~$X$ on~$M$. 
The transversality to the flow on $\partial D_L$ before applying the projection implies that we get a $\partial$-strong Birkhoff section.
In the notation above, if $n_1=0$ then each loop $\alpha$ gets mapped to an interior point of the obtained section where it intersects~$\gamma_j$ transversely, and if $n_1\neq 0$ then the orbit $\gamma_j$ gets covered $|n_1|$ times.

\begin{remark}
Since the Birkhoff section obtained is $\partial$-strong, the associated return time function is bounded.
\end{remark}

\section{A lemma on weak* convergence}

Let $X$ be a compact manifold and let $\mu$ be a regular\footnote{A Borel probability measure is regular if for every Borel set $E$ and every $\epsilon>0$ we find a compact set $K$ and an open set $U$ such that $K \subset E \subset U$, $\mu(K) > \mu(E) - \epsilon$ and $\mu(U) < \mu(E) + \epsilon$.} Borel probability measure on $X$. 
Let $\mu_n$ be a sequence of Borel probability measures satisfying $\mu_n \to \mu$ in the weak* topology. 

\begin{lemma}
\label{lemma_measures_conv}
If $V \subset X$ is open with $\mu(\partial V)=0$, and if $f:V\to\R$ is a continuous bounded function, then
$$
\int_V f \, d\mu_n \to \int_V f \, d\mu \, .
$$
\end{lemma}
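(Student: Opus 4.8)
The plan is to reduce the statement, which involves integrating a function defined only on the open set $V$, to plain weak* convergence against a globally defined continuous function, by inserting a compactly supported cutoff. Set $C = \sup_V |f| < \infty$ and fix $\epsilon > 0$. Using inner regularity of $\mu$ on the open set $V$, I would first choose a compact set $K_0 \subset V$ with $\mu(K_0) > \mu(V) - \epsilon$. Since $X$ is a compact manifold, hence locally compact and normal, I can then pick an open set $O$ and a continuous function $\chi : X \to [0,1]$ such that $K_0 \subset O$, $\overline{O} \subset V$, $\chi \equiv 1$ on $\overline{O}$, and $\supp \chi$ is a compact subset of $V$. The function $h := \chi f$, extended by zero outside $\supp\chi$, is then continuous on all of $X$, with $\|h\|_{C^0(X)} \le C$ (continuity at points of $\partial V$ follows from $\supp\chi$ being a compact subset of $V$).

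The next step is the elementary estimate that for any Borel probability measure $\nu$ on $X$,
\[
\left| \int_V f \, d\nu - \int_X h \, d\nu \right| = \left| \int_V f(1-\chi)\, d\nu \right| \le C\, \nu(V\setminus O),
\]
since $1-\chi$ vanishes on $O$ and $|f(1-\chi)| \le C$ on $V$. Applied to $\nu = \mu$ this gives a bound at most $C\epsilon$, because $\mu(V\setminus O) \le \mu(V) - \mu(K_0) < \epsilon$. Applied to $\nu = \mu_n$, the task becomes to control $\limsup_n \mu_n(V\setminus O)$, and this is where the Portmanteau theorem enters. Since $\mu(\partial V) = 0$, one has $\limsup_n \mu_n(V) \le \limsup_n \mu_n(\overline V) \le \mu(\overline V) = \mu(V)$, while since $O$ is open, $\liminf_n \mu_n(O) \ge \mu(O) \ge \mu(K_0) > \mu(V) - \epsilon$; subtracting (using $O \subset V$) yields $\limsup_n \mu_n(V\setminus O) = \limsup_n\bigl(\mu_n(V) - \mu_n(O)\bigr) \le \epsilon$, hence $\limsup_n \left| \int_V f\, d\mu_n - \int_X h\, d\mu_n \right| \le C\epsilon$.

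Finally, since $h \in C(X)$ and $\mu_n \to \mu$ weak*, we have $\int_X h\, d\mu_n \to \int_X h\, d\mu$. Combining the three pieces through the triangle inequality gives $\limsup_n \left| \int_V f\, d\mu_n - \int_V f\, d\mu \right| \le 2C\epsilon$, and letting $\epsilon \to 0$ completes the argument. I expect the only delicate point to be keeping track of the directions of the one-sided Portmanteau inequalities: the set $V\setminus O$ is in general neither open nor closed, and it is precisely the hypothesis $\mu(\partial V)=0$ together with the thickening of the compact set $K_0$ to the slightly larger open set $O$ that makes the bound on $\limsup_n \mu_n(V\setminus O)$ land on the correct side.
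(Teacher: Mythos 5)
Your proof is correct and follows essentially the same strategy as the paper's: insert a Urysohn cutoff supported in $V$ so that the truncated function extends continuously to all of $X$, apply weak* convergence to that global function, and control the residual $\mu_n$-mass near $\partial V$ using the hypothesis $\mu(\partial V)=0$. The only difference is presentational: you invoke the standard Portmanteau inequalities (for the closed set $\overline{V}$ and the open set $O$), whereas the paper proves the corresponding boundary-mass estimate by hand with a second bump function supported near $\partial V$; both rest on the same mechanism.
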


\begin{proof}
We start with a claim.

\medskip

\noindent \textit{Claim.} For every $\eta>0$ there exists an open neighborhood $W$ of $\partial V$, and $n_0\geq 1$, such that $n\geq n_0 \Rightarrow \mu_n(W) < \eta$.

\medskip

To prove the claim, we use the regularity of $\mu$ to find an open neighborhood $W'$ of $\partial V$ such that $\mu(W') < \eta$. 
Now we find $\psi:X\to[0,1]$ continuous such that $\supp(\psi) \subset W'$ and $\psi|_W \equiv 1$ on some open neighborhood $W\subset W'$ of $\partial V$.
Then
$$
\eta > \mu(W') \geq \int_X \psi \, d\mu = \lim_{n\to\infty} \int_X \psi \, d\mu_n
$$
which implies that we can find $n_0$ such that 
$$
n\geq n_0 \Rightarrow \mu_n(W) \leq \int_X \psi \, d\mu_n < \eta
$$
as desired. The proof of the claim is complete.

\medskip

Let $\epsilon>0$ be fixed arbitrary. For any $\delta>0$ we can use the regularity of $\mu$ to find a compact set $K_0 \subset V$ such that $\mu(K_0) > \mu(V) - \delta/2$, and an open set $U_0 \supset \overline{V}$ such that $\mu(U_0) < \mu(\overline V) + \delta/2$.
By the claim proved above we find an open neighborhood $W$ of $\partial V$ and $n_0$ such that $\mu_n(W) < \delta$ for all $n\geq n_0$.
Consider sets
$$
K = K_0 \cup (V \setminus W), \qquad \qquad U = U_0 \cap (V \cup W) \, .
$$
Then $K \supset K_0$ is compact, $U \subset U_0$ is open, and $U \setminus K \subset W$.
Moreover, we can estimate
$$
\mu(U\setminus K) \leq \mu(U_0\setminus K_0) = \mu(U_0) - \mu(K_0) < \mu(\overline V) + \delta/2 - (\mu(V) - \delta/2) = \delta
$$
where the assumption $\mu(\partial V)=0$ was used to get the equality $\mu(V) = \mu(\overline V)$. 
In addition, we have
$$ 
n\geq n_0 \Rightarrow \mu_n(U\setminus K) \leq \mu_n(W) < \delta \, .
$$
Now consider $\varphi:X\to[0,1]$ continuous such that $\supp(\varphi) \subset U \setminus K$ and $\varphi\equiv1$ on some neighborhood of $\partial V$.
Let $h:X\to\R$ be the continuous function that agrees with $(1-\varphi)f$ on $V$ and vanishes on $X\setminus V$.
By the assumption that $\mu_n\to\mu$ weak*, we can find $n_1$ such that
$$
n\geq n_1 \Rightarrow \left| \int_X h \, d\mu_n - \int_X h \, d\mu \right| < \delta \, .
$$
We can estimate
$$
\begin{aligned}
\left| \int_V f \, d\mu_n - \int_V f \, d\mu \right| &= \left| \int_V \varphi f \, d\mu_n - \int_V \varphi f \, d\mu + \int_X h \, d\mu_n - \int_X h \, d\mu \right| \\
&\leq \left| \int_V \varphi f \, d\mu_n \right| + \left| \int_V \varphi f \, d\mu \right| + \left| \int_X h \, d\mu_n - \int_X h \, d\mu \right| \\
&\leq \left(  \sup |f|  \right) (\mu_n(U\setminus K)+\mu(U\setminus K)) + \left| \int_X h \, d\mu_n - \int_X h \, d\mu \right|
\end{aligned} 
$$
from where we see that if $n \geq \max\{n_0,n_1\}$ then 
$$
\left| \int_V f \, d\mu_n - \int_V f \, d\mu \right| \leq 2\delta \left(  \sup |f| \right) + \delta \, .
$$
Finally, note that we could have chosen $\delta>0$ satisfying $2\delta \left( \sup |f| \right) + \delta<\epsilon$. 
The proof of the lemma is complete.
\end{proof}

\section{An alternative proof of the $C^1$-density of the existence of a Birkhoff section}
\label{section: C}

In this appendix we sketch an alternative proof of the following direct consequence of Corollary~\ref{cor_generic_existence}.
As the proof takes a different path, we think it is worth mentioning~it. 

\begin{theorem}\label{thm: section}
If $(M,\xi)$ is a closed oriented $3$-manifold then the set of Reeb vector fields for $\xi$ on $M$ that admit a Birkhoff section is $C^1$-dense in the set of Reeb vector fields.
\end{theorem}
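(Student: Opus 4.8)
The plan is to argue by induction on the number of broken binding orbits of a broken book decomposition carrying the Reeb vector field: a $C^1$ connecting lemma for Reeb flows creates homoclinic/heteroclinic connections among the broken binding orbits, and then the reduction procedure of \cite[Theorem~4.13]{CDR} removes broken binding orbits. Fix a Reeb vector field $X$ for $\xi$. Since nondegeneracy is $C^\infty$-generic among contact forms defining $\xi$, we may first replace $X$ by a nondegenerate Reeb vector field that is $C^1$-close to it; by \cite[Theorem~1.1]{CDR} this $X$ is then strongly carried by a broken book decomposition $(K,\F)$ with $K = K_r \sqcup K_b$. We induct on $\#\pi_0(K_b)$. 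If $K_b = \emptyset$ then, as recalled at the start of Section~\ref{sec_getting_BSs}, $(K,\F)$ is a rational open book whose pages are Birkhoff sections, and we are done; so assume $K_b \neq \emptyset$.

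For the inductive step, pick a component $h \subset K_b$. Arguing as in \cite{CDR} (compare the proof of Lemma~\ref{lemma_times_hit_rigid}): either $W^u(h) \subset W^s(K_b)$, or, choosing a point $z$ on $W^u(h)$ outside $W^s(K_b)$, the $\omega$-limit of $z$ cannot stay away from $K_b$ (since orbits away from $K_b$ hit rigid leaves, forbidding recurrence), hence contains a broken binding component $h'$, so the forward orbit of $z$ accumulates on $W^s(h')$; either way, some branch of $W^u(h)$ accumulates on a branch of $W^s(h')$ with $h' \subset K_b$. The connecting lemma for Reeb flows, Theorem~\ref{thm: connecting} — whose proof rests on verifying the \emph{Lift Axiom}, Lemma~\ref{lemma: lift Axiom}, within the framework of Arnaud, Bonatti and Crovisier \cite{ABC,BC} — then produces, after an arbitrarily $C^1$-small perturbation of the contact form inside the space of contact forms defining $\xi$, an actual transverse intersection of $W^u(h)$ with $W^s(h')$. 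A further $C^1$-small perturbation restores nondegeneracy while keeping this connection transverse (transversality being a $C^1$-open condition) and keeping the field strongly carried by a broken book decomposition. Iterating over the finitely many broken binding components, we arrive at a nondegenerate Reeb vector field, still $C^1$-close to the original $X$, carrying a broken book decomposition and possessing, for each of its broken binding orbits, a transverse connection with the union of the broken binding orbits.

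As explained in \cite{CDR} and recalled in Remark~\ref{remark: CM}, each such transverse connection produces an embedded surface, transverse to the flow, bounded by periodic orbits, and intersecting the broken binding orbits positively; by \cite[Theorem~4.13]{CDR} these surfaces can be ``added'' to $(K,\F)$, yielding a new broken book decomposition carried by the same contact form but with strictly fewer broken binding components. The induction hypothesis applies to this decomposition and delivers a Birkhoff section. Since every perturbation performed above was $C^1$-small and stayed inside the set of Reeb vector fields for $\xi$, the set of such vector fields admitting a Birkhoff section is $C^1$-dense.

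The crux of the argument is the Lift Axiom for Reeb flows, Lemma~\ref{lemma: lift Axiom}: one must realise each elementary $C^1$-perturbation demanded by the Bonatti--Crovisier connecting machinery — supported in a flow box around a non-periodic point and producing a prescribed small displacement of the first-return map — by a compactly supported change of the contact form keeping $\xi$ fixed, equivalently by the time-one map of a contact Hamiltonian flow with small support, and the tilings and ``towers'' of \cite{ABC,BC} must be arranged compatibly with contact flow-box coordinates. The remaining points are more routine but still need care: the reduction must be shown to terminate, and the auxiliary perturbations restoring nondegeneracy must neither destroy the connections already created nor create new broken binding orbits.
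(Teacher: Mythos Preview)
Your overall strategy matches the paper's: pass to a nondegenerate form, invoke the broken book from \cite{CDR}, use the Bonatti--Crovisier connecting machinery via the Lift Axiom (Lemma~\ref{lemma: lift Axiom}) to manufacture connections at broken binding orbits, and then invoke the reduction of \cite[Theorem~4.13]{CDR}. However, the way you implement the connecting step has a genuine gap.

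First, the accumulation argument is a detour. The connecting lemma (Theorem~\ref{thm: connecting}) joins \emph{any} two prescribed points by a single orbit after a $C^1$-small perturbation; you do not need $W^u(h)$ to accumulate on some $W^s(h')$ beforehand. In the paper one simply takes $a$ on the unstable manifold of a broken orbit~$k$ and $b$ on the stable manifold of the \emph{same}~$k$, with both points on the boundary of a small neighborhood $U$ of the binding, and applies the connecting lemma with perturbation supported in the complement of~$U$ (so the broken book and the local stable/unstable manifolds survive).

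Second, and this is the real issue, one heteroclinic connection from $h$ to some $h'$ does not feed into the reduction. The mechanism of \cite[Theorem~4.13]{CDR} that you invoke is Fried's pair-of-pants construction: for a fixed broken orbit~$k$ one needs \emph{two} transverse \emph{homoclinic} connections, one from each unstable branch to the opposite stable branch (in the paper's notation, $E\to N$ and $W\to S$). These produce nearby periodic orbits $p',q',r'$ and a transverse pair of pants~$P$ that intersects~$k$; the Fried sum of $P$ with the rigid pages then removes~$k$ from the broken binding. Your ``one transverse connection per broken orbit to $K_b$'' does not yield such a surface, and Remark~\ref{remark: CM} does not say that a single heteroclinic connection suffices---it speaks of ``enough homoclinic connections''. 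So the inductive step, as written, does not go through: you must instead create, for the chosen~$k$, two homoclinic connections (two applications of the connecting lemma, with disjoint perturbation supports away from the binding), and only then apply the reduction.
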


As we have mentioned in the introduction, a $C^\infty$-generic contact form $\lambda$ on~$(M,\xi)$ is nondegenerate. 
Let $R$ be the Reeb vector field of $\lambda$.
In that case, by Theorem~1.1 of \cite{CDR}, $R$ is carried by a broken book decomposition $(K,\mathcal{F})$. 
The binding $K$ of the broken book decomposition has radial and broken components, and if all the components are radial the broken book decomposition is a rational open book decomposition and any page is a Birkhoff section.
The proof of Theorem~4.13 in~\cite{CDR} provides a mechanism to successively eliminate broken components of the binding $K$, inspired by Fried~\cite{friedanosov}. 
It works as follows, see also Figure~\ref{figure:FriedPants}.

Let $k\in K$ be a broken component of the binding, which is automatically a hyperbolic orbit of the Reeb flow~$R$. 
Assume it is positive (i.e. the eigenvalues of the linearized first return map are positive) and denote by~$N$ and $S$ the two components of the complement of $k$ in its stable manifold, and by $E$ and $W$ the two components of the complement of $k$ in its unstable manifold (in red and blue respectively on Figure~\ref{figure:FriedPants} left).
If there is a transverse homoclinic connection $p$ from~$E$ to $N$ and a transverse homoclinic connection $q$ from $W$ to $S$, then one can find a periodic orbit~$p'$ of $R$ close to $p$ and another one~$q'$ close to $q$.
There is also a periodic orbit~$r'$ following successively $p$ and~$q$ thanks to the combinatorial description of the dynamics. 
In this situation, Fried~\cite{friedanosov} constructed a pair of pants~$P$ transverse to~$R$, bounded by these three orbits, and intersecting
the orbit~$k$ in its interior, see Figure~\ref{figure:FriedPants}. 

\begin{figure}[ht]
\centering
\begin{picture}(360,200)(0,0)
\put(-30,10){\includegraphics[width=.7\textwidth]{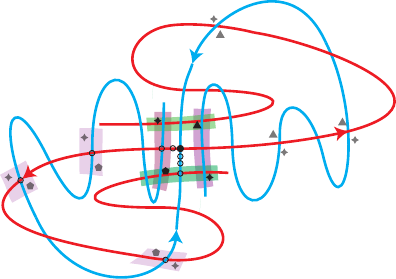}}
\put(220,0){\includegraphics[width=.4\textwidth]{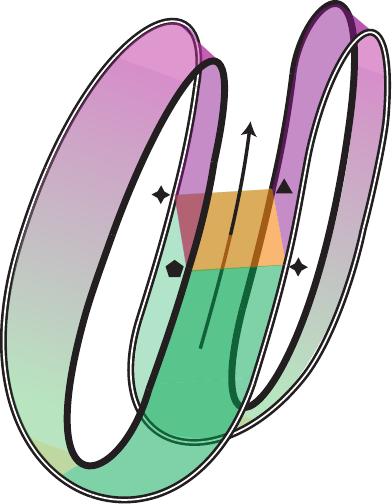}}
\put(310,142){$k$}
\put(330,181){$p'$}
\put(280,157){$q'$}
\put(260,172){$r'$}
\put(365,158){$r'$}
\end{picture}
\caption{On the left, a transversal picture where the central dot is~$k$, the stable manifolds are in blue, the unstable ones in red, the empty dots correspond to the homoclinic W-S connection~$q$, the pentagons to the orbit~$q'$, the triangles to the orbit~$p'$ and the 4-pointed stars to the orbit~$r'$. On the right the pair of pants~$P$ transverse to the vector field~$R$ and intersecting the orbit~$k$ constructed by Fried: it is the union of a parallelogram in a local transverse disc whose vertices lie on~$p', r', q'$ and $r'$ respectively, and the image under the flow of the two ``almost stable edges'' of this parallelogram. This topological surface is not strictly speaking transverse to the flow, but one can smooth it and make it transverse to~$R$ at the same time. 
}
\label{figure:FriedPants}
\end{figure}

We add this section $P$ to our broken book decomposition, via the following process, sometimes called Fried sum: 
denoting by $S_{R_\F}$ the union of the rigid pages, we consider the union $P\cup S_{R_\F}$. 
It is an immersed surface with 1-dimensional singularities consisting of arcs ending on boundary orbits of $P$ or~$S_{R_\F}$, see Figure~\ref{figure:BindingSuppression} left.
We desingularize those arcs transversally to the vector field, as on the central picture. 
On a broken binding orbit transverse to~$P$, the result is depicted on Figure~\ref{figure:BindingSuppression} right: the addition of a meridian circle corresponding to the intersection with~$P$ to the boundary of~$S_{R_\F}$ gives a boundary for the new surface that intersects all orbits of the projectivized flow, including the stable and unstable direction of the broken binding orbit. 
We then obtain a new broken book decomposition, whose new binding is the initial one increased by the three new orbits $p', q', r'$ and whose broken binding is the previous with  $k$ removed, i.e. $k$ is no more a broken binding component anymore: it can either disappear from the binding or become a radial component, depending on the balance between the number of incoming and outgoing rigid pages. 
Since the orbits $p'$, $q'$ and $r'$ are intersecting the pages of $(K,\mathcal{F})$ transversally, they become radial components of the new binding.

\begin{figure}[ht]
\centering
\includegraphics[width=.4\textwidth]{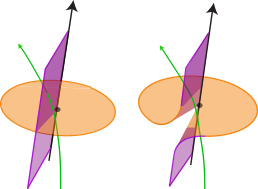}\qquad
\includegraphics[width=.25\textwidth]{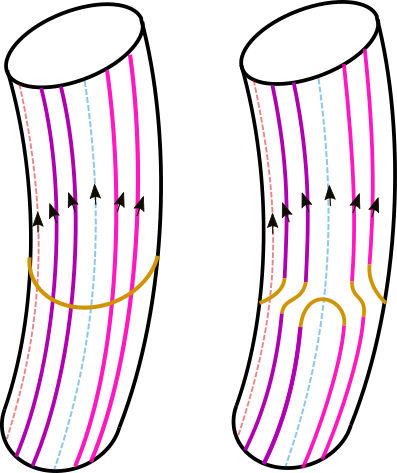}
\caption{
On the left is depicted the Fried sum of two transverse surfaces, which amounts to consider their union and desingularize it transversaly to the vector field. 
On the right what happens along a broken binding orbit~$k$ when adding the pair of pants~$P$ to the collection of rigid pages~$S_{R_\F}$ . 
The torus is the blow-up of~$k$. 
The stable and unstable manifolds are dotted, the rigid pages are in pink and purple depending on the sector, and~$P$ is the orange meridian circle. 
If the sum of the longitudinal coordinates of all boundary components of~$S_{R_\F}$ add up to~$0$, then the Fried union with $P$ also has zero longitudinal coordinate, hence the boundary of the new surface can be made meridional. 
After an isotopy, we obtain a surface with no boundary component along~$k$ and transverse to it. 
If this sum is nonzero, then the obtained surface still has~$k$ in its boundary, but $k$ is now in the radial part of the boundary ({\it i.e.} the flow winds with respect to the new surface).
}
\label{figure:BindingSuppression}
\end{figure}

By applying this process on all broken binding orbits successively, we reduce the broken part of the binding. 
After a finite number of steps, the broken book has only radial binding components, and all its pages are Birkhoff sections for the Reeb flow.

The case where $k$ is a negative hyperbolic periodic orbit is treated in the same manner. The difference is that now one needs to consider the second iterate of the return map to a local transversal to the periodic orbit in order to have the same picture.

The conclusion so far is that the proof of Theorem \ref{thm: section} boils down to proving that by a $C^1$-small perturbation supported away from the binding $K$, one can introduce transverse homoclinic connections at will. Note that if the perturbation is small enough and supported outside a fixed neighborhood of $K$, then the Reeb vector field remains carried by $(K,\mathcal{F})$ since the transversality condition is open away from the binding.

This is a direct application of the work of Bonatti-Crovisier \cite{BC} and Arnaud-Bonatti-Crovisier \cite{ABC} in the Reeb case.
The only thing to check is that Reeb flows satisfy the {\it Lift Axiom} of Pugh-Robinson~\cite{PR}. 
For its statement and proof below, we denote by $D_r(z) \subset \R^{2n}$ and $B_r(z) \subset \R^{2n}$ the closed and open euclidean balls of radius $r > 0$ and center $z \in \R^{2n}$, respectively.

\begin{lemma}[Lift Axiom]\label{lemma: lift Axiom}
Equip $\R^{2n} \times \R$ with coordinates $(z,t)$.
Consider a contact form $\lambda$ on $D_1(0) \times [0,1]$ with Reeb vector field $R = \partial_t$.
There exist $K>0$ and $0<\epsilon_*<1/2$ with the following property.
For every $0<\epsilon<\epsilon_*$ and~$z_0 \in B_\epsilon(0) \setminus \{0\}$ there exists a smooth contact form $\lambda'$ on $D_1(0) \times [0,1]$ with the same contact kernel than $\lambda$ such that $\|\lambda'-\lambda\|_{C^2} < K \epsilon$, $\lambda'-\lambda$ is compactly supported in the open box $B_{\epsilon^{-1}|z_0|}(0) \times (0,1)$, and the Reeb flow of $\lambda'$ takes $(0,0)$ to $(z_0,1)$.
\end{lemma}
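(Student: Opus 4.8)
The plan is to work with a generating-function / Hamiltonian description of the perturbation. Since $R=\partial_t$ on $D_1(0)\times[0,1]$, the contact form can be written in Darboux-type coordinates as $\lambda = dt - \beta$, where $\beta$ is a $1$-form on the $\R^{2n}$-factor (possibly $t$-dependent) whose restriction to each slice $\{t=\mathrm{const}\}$ is such that $d\beta|_{\{t=c\}}$ is a symplectic form and $\iota_{\partial_t}d\lambda = 0$; the condition $R=\partial_t$ forces $d\beta$ to be $t$-independent and $\beta$ to have no $dt$-component. To keep the contact \emph{kernel} $\xi = \ker\lambda$ fixed, a perturbation of the form $\lambda' = e^{f}\lambda$ for a function $f$ compactly supported in the interior is the natural device: this changes the Reeb vector field but not the plane field, and $\|\lambda'-\lambda\|_{C^2}$ is controlled by $\|f\|_{C^2}$. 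The Reeb vector field $R'$ of $e^f\lambda$ satisfies $R' = e^{-f}(R + Y_f)$ where $Y_f\in\xi$ is determined by $d\lambda$ and $df$; concretely $\iota_{Y_f}d\lambda = df - (df(R))\lambda$ on $\xi$. So the task reduces to: choose $f$, compactly supported in $B_{\epsilon^{-1}|z_0|}(0)\times(0,1)$ with $\|f\|_{C^2}\lesssim \epsilon$, such that the flow of $R'$ from time $0$ to time $1$ moves the point $z=0$ to $z=z_0$ in the $\R^{2n}$-factor.

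The key steps, in order, are as follows. First, I would establish a \emph{linear model computation}: if $f(z,t) = \chi(t)\,\ell(z)$ with $\ell$ linear and $\chi$ a bump supported in $(0,1)$ with $\int_0^1\chi\,dt = 1$, then to first order the net displacement of the origin under the Reeb flow of $e^f\lambda$ is $\int_0^1 Y_f\,dt$ evaluated along the unperturbed orbit $t\mapsto(0,t)$, which equals the Hamiltonian vector field of $\ell$ with respect to $d\beta|_0$ — an arbitrary prescribed vector in $\R^{2n}$, of size comparable to $\|\nabla\ell\|$. Second, I would handle the \emph{support constraint}: the perturbation must vanish outside $B_{\epsilon^{-1}|z_0|}(0)$, a ball whose radius $\epsilon^{-1}|z_0|$ can be as small as we like only relative to $|z_0|$ — but crucially $\epsilon^{-1}|z_0| \geq |z_0|$, so the target point $z_0$ lies well inside this ball (in fact $|z_0| < \epsilon\cdot\epsilon^{-1}|z_0|$), and there is ample room. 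I would take $f(z,t) = \chi(t)\,\rho(z)\,\langle v, z\rangle$ where $v$ is chosen so that the Hamiltonian flow of $\langle v,z\rangle$ for unit time is the translation by $z_0$ (using the constant symplectic form $d\beta|_0$, up to lower-order corrections), and $\rho$ is a cutoff equal to $1$ on $B_{|z_0|}(0)$ and supported in $B_{\epsilon^{-1}|z_0|}(0)$. Then $\|f\|_{C^0}\lesssim |z_0|$, $\|f\|_{C^1}\lesssim 1$, and $\|f\|_{C^2}\lesssim (\epsilon^{-1}|z_0|)^{-1} = \epsilon|z_0|^{-1}$ — wait, this needs care; the second derivative of the cutoff scales like (width)$^{-2}$ times amplitude, giving $\|f\|_{C^2}\lesssim |z_0|\cdot(\epsilon^{-1}|z_0|)^{-2}\cdot(\epsilon^{-1}|z_0|) + \dots$, and a careful bookkeeping shows $\|f\|_{C^2}\lesssim \epsilon$ uniformly, which is exactly the bound claimed. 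Third, I would upgrade the first-order computation to an \emph{exact} statement: the time-$1$ map of $R'$ depends smoothly on $f$, the first-order term hits $z_0$ exactly, and the error is $O(\|f\|_{C^1}^2\cdot|z_0|) = O(|z_0|)$ in a controlled way; then an implicit-function-theorem / fixed-point argument on the finite-dimensional parameter $v$ (perturbing $v$ slightly to correct the quadratic error) produces $f$ with the flow landing \emph{exactly} at $(z_0,1)$, at the cost of a constant-factor increase in $K$.

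The main obstacle I anticipate is the simultaneous reconciliation of the three competing constraints: the $C^2$-bound $\|\lambda'-\lambda\|_{C^2}<K\epsilon$ with $K$ \emph{independent of $\epsilon$ and $z_0$}, the support confined to the thin box of width $\sim\epsilon^{-1}|z_0|$, and the requirement of moving the origin a definite distance $|z_0|$. The tension is that a smaller support forces larger derivatives of the cutoff; the resolution is the specific scaling $\epsilon^{-1}|z_0|$ for the width, which is precisely tuned (this is the Pugh--Robinson "lift axiom" scaling) so that the worst term in $\|f\|_{C^2}$ is $O(\epsilon)$ and no better — one must check this balance honestly. A secondary technical point is that the contact form $\lambda$ is only assumed defined on $D_1(0)\times[0,1]$ with $R=\partial_t$, but is otherwise arbitrary, so $d\beta$ is a non-constant symplectic form; one must verify the linear Hamiltonian computation with $d\beta|_0$ and then absorb the variation of $d\beta$ over the small ball into the quadratic error, using that the ball has radius $\to 0$. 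Once the scaling is pinned down and the IFT correction is in place, the remaining estimates are routine.
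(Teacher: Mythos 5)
Your proposal is correct and follows essentially the same mechanism as the paper's proof: perturb within the conformal class (your $e^{f}\lambda$ versus the paper's $h_{z_0}\lambda$), use a Hamiltonian that is linear in $z$ with coefficient of size $|z_0|$, cut off at the Pugh--Robinson scale with $\|\nabla\phi\|=O(\epsilon|z_0|^{-1})$, $\|\nabla^2\phi\|=O(\epsilon^2|z_0|^{-2})$, and exploit the fact that the uncut perturbation is $O(|z_0|\,(|z|+|z_0|))$ so that the product-rule bookkeeping yields $\|\lambda'-\lambda\|_{C^2}=O(\epsilon)$; your balance check is right. The one genuine difference is how exactness of the endpoint is achieved. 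The paper prescribes the connecting orbit in advance, $\gamma(t)=(\beta(t)z_0,t)$ with a monotone cutoff $\beta$ in $t$, and builds $\hat h_{z_0}(z,t)=1+\langle V(t;z_0),(z-\beta(t)z_0,0)\rangle$ so that $h\equiv 1$ and $dh$ is prescribed along $\gamma$, making $\gamma$ an exact orbit of the perturbed Reeb field; since the spatial cutoff equals $1$ on $D_{2|z_0|}(0)\supset$ the $z$-projection of $\gamma$, cutting off does not disturb this, so no correction step is needed and the nonconstancy of $d\lambda$ is absorbed into $V$ automatically. Your route instead computes the displacement to first order and repairs the error by an implicit-function argument in the finite-dimensional parameter $v$; this works, but note that the relevant smallness is relative to the displacement scale $|z_0|$, not to $1$: you must check that the nonlinear error in the time-to-reach-$\{t=1\}$ endpoint map is $o(|z_0|)$ uniformly (with your scalings it is $O(|z_0|^2+\epsilon|z_0|)$, which suffices once $\epsilon_*$ is small), and that the corrected $v$ stays of size $\sim|z_0|$ so the constant $K$ remains independent of $\epsilon$ and $z_0$. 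With that uniformity made explicit (and, e.g., taking the inner plateau of $\rho$ of radius $2|z_0|$ as in the paper, so the orbit stays where the cutoff is inactive), your argument is a complete proof; the paper's version simply trades the fixed-point step for an explicit construction of the $1$-jet of the Hamiltonian along the desired orbit.
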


\begin{proof}
Fix $\beta:[0,1]\to[0,1]$ smooth such that $\beta(t) = 0$ for $t$ near $0$, $\beta(t) = 1$ for $t$ near $1$, $\beta'\geq0$.
For every $z_0 \in B_1(0)$ consider the curve $\gamma(t;z_0) = (\beta(t)z_0,t)$.
We look for a family of contact Hamiltonians $h_{z_0}(z,t)$ on $D_1(0) \times [0,1]$, depending on a parameter $z_0 \in \R^{2n}$. 
The associated contact Hamiltonian vector field is $X = h_{z_0}R+Y$, where $Y$ is determined by $i_{Y}\lambda=0$, $dh_{z_0}-(i_Rdh_{z_0})\lambda = i_{Y}d\lambda$. 
We would like to achieve that $X$ is positively tangent to $\gamma$, and that $h_{z_0}=1$ along~$\gamma$.
A simple calculation shows that these constraints can be met if $|z_0|$ is small enough.
Moreover, if we represent $dh_{z_0}(\gamma(t)) = \left< V(t;z_0),\cdot \right>$ by a vector field then $|V|+|\partial_tV|+|\partial_t^2V|\leq c|z_0|$ for some $c>0$ independent of $z_0$.
Note that $\left<V(t;z_0),\partial_t\gamma(t;z_0)\right> = 0$ for all $t$.
Perhaps after making $c$ larger, but still independent of $z_0$, the function $\hat h_{z_0}(z,t) = 1 + \left< V(t;z_0),(z-\beta(t)z_0,0) \right>$ satisfies 
$$ 
\|\hat h_{z_0}-1\|_{C^2(D_1(0)\times[0,1])} \leq c|z_0|\, , \qquad |\hat h_{z_0}(z,t)-1| \leq c|z_0| (|z|+|z_0|)
$$
and meets the requirements along $\gamma$.
The discussion so far is independent of $\epsilon$.
Consider smooth bump functions $\phi_{z_0}^\epsilon(z)$ with values in $[0,1]$, $\phi_{z_0}^\epsilon = 1$ on $D_{2|z_0|}(0)$, $\supp(\phi_{z_0}^\epsilon) \subset B_{\epsilon^{-1}|z_0|}(0)$, $\|\nabla\phi_{z_0}^\epsilon\|_{\infty} = O(\epsilon|z_0|^{-1})$, $\|\nabla^2\phi_{z_0}^\epsilon\|_\infty = O(\epsilon^2|z_0|^{-2})$.
To conclude, note that there exists $\epsilon_*>0$ small enough such that if $0<\epsilon<\epsilon_*$ then $h_{z_0} = 1+\phi_{z_0}^\epsilon(z)(\hat h_{z_0}(z,t)-1)$ is the contact Hamiltonian we were looking for, and $\lambda' = h_{z_0}\lambda$.
\end{proof}

For the rest of the argument, recall from \cite{ABC,BC} that it uses the fact that in the conservative setting there are no wandering points and that there exists $N >0$ depending only on the $C^1$-size of the perturbation $\epsilon$, such that  given any fixed closed neighborhood $U$ of  finitely many periodic orbits one can join any two points in the same 
 connected component of $M\setminus (\cup_{t\in [0,N]} \phi_t(U))$ by a $\delta$-pseudo-orbit, where $\delta \ll \epsilon$ can be chosen small at will independently of $N$ and $\epsilon$.
The jumps of the pseudo-orbit are contained in a finite number of disjoint flow-boxes in $M\setminus U$ whose radii are of order $\delta$ and length $N$.
To construct the flow-boxes, $\delta$ has to be taken very small once $\epsilon$ and $N$ have been fixed.
  Here $\phi_t$ denotes the flow of $R$ at time $t$.  
 The $\delta$-pseudo-orbits can then be turned into genuine ones by $\epsilon$-$C^1$-small and $\delta$-$C^0$-small perturbations of the flow, supported in the flow-boxes (and thus in the complement of  $U$) and given by the Lift Axiom, see \cite{ABC,BC}. 
  
 Precisely here, to make sure that we create a homoclinic connection, we take a point $a$ on a unstable manifold of  a broken component of the binding $k$ and a point $b$ on one of its stable manifolds. The set $U$ is a small closed neighborhood of the binding of the supporting broken book so that $a$ and $b$ are in the boundary of $U$. We can moreover get that
 \begin{itemize}
 \item the orbit of $a$ for negative time stays in $U$ as well as the orbit of $b$ for positive times.
 \item the orbit of $a$ for time $[0,N]$ stays outside of $U$ as well as the orbit of $b$ for time $[-N,0]$.
 \end{itemize}

We can then find, following \cite{ABC,BC}, two first flow-boxes around the portions of orbits $\phi_{[0,N]}(a)$ and $\phi_{[-N,0]}(b)$ and the remaining ones away from the two first and $U$, provided the endpoints $\phi_N(a)$ and $\phi_{-N}(b)$ belong to the same connected component of $M\setminus (\cup_{t\in [0,N]} \phi_t(U))$. This last condition is always achievable by taking $U$ small enough.
 
Then the Lift Axiom gives a perturbed Reeb vector field, with a deformation supported in our collection of flow-boxes (and thus in $M\setminus U$),  whose flow connects~$a$ and $b$. 
This in turn yields a homoclinic orbit for $k$ passing through $a$ and $b$, since we did not perturb the portions of orbits in $U$ through $a$ and $b$ that were respectively negatively and positively asymptotic to $k$.
An extra $C^\infty$-small perturbation is enough to make the homoclinic connection transverse.
If $\delta$ is taken small enough, the transversality of the perturbed Reeb vector field with the pages of the broken book away from the binding is preserved.

In what follows $M$ is a closed contact manifold of dimension $2n+1$ for $n\geq 1$. To go slightly further, let us summarize the connecting lemma that is in particular obtained in the previous discussion:

\begin{theorem}[Arnaud, Bonatti and Crovisier \cite{ABC} $+\epsilon$]\label{thm: connecting}
Let $(M,\xi=\ker \lambda)$ be a closed contact manifold with $\lambda$ nondegenerate, then given any two points $x,y\in M$, there exists a $C^2$-small perturbation $\lambda'$ of $\lambda$ 
such that $y$ is in the positive orbit of $x$ under the flow of $R_{\lambda'}$.
\end{theorem}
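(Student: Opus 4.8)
The plan is to run the Bonatti--Crovisier connecting lemma for pseudo-orbits in the conservative category, with Lemma~\ref{lemma: lift Axiom} supplying the only ingredient specific to Reeb flows. First I would record that the Reeb flow of a contact form $\lambda$ on $M^{2n+1}$ preserves the volume form $\lambda\wedge(d\lambda)^n$, so it is conservative; a conservative flow on a connected compact manifold has no proper attractor, hence its chain-recurrent set is all of $M$ and consists of a single chain class. In particular the flow of $R_\lambda$ is chain transitive, so for every $\delta>0$ there is a $\delta$-pseudo-orbit from $x$ to $y$.

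Next I would feed this into the scheme of \cite{ABC,BC}. Fix the desired $C^2$-size $\epsilon_0>0$ of the perturbation and set $\epsilon=\epsilon_0/K$, where $K$ is the constant from Lemma~\ref{lemma: lift Axiom}; this produces an integer $N$ (the length of the flow boxes), depending only on $\epsilon$, and then one chooses $\delta\ll\epsilon$. Along a fixed $\delta$-pseudo-orbit from $x$ to $y$ one selects finitely many pairwise disjoint flow boxes $\Delta_1,\dots,\Delta_k$, each of radius of order $\delta$ and time-length $N$, containing all the jumps; here there is no set $U$ to avoid, so the boxes may be taken anywhere in $M$ (if one wants the orbit segments through $x$ and $y$ left untouched, it suffices to shrink the boxes). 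Around the relevant orbit segment the flow-box theorem provides coordinates $D_1(0)\times[0,1]$ in which $R_\lambda=\partial_t$, so that $\lambda|_{\Delta_i}$ meets the hypotheses of Lemma~\ref{lemma: lift Axiom}: since the jump vector $z_0$ has $|z_0|$ of order $\delta\ll\epsilon$, we have $z_0\in B_\epsilon(0)$ and the admissible support window $B_{\epsilon^{-1}|z_0|}(0)$ is well inside the unit ball.

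Then, box by box, I would invoke the Lift Axiom to realize the required jump: inside $\Delta_i$ it yields a contact form $\lambda_i'$ with $\ker\lambda_i'=\ker\lambda=\xi$ on $\Delta_i$, with $\lambda_i'-\lambda$ compactly supported in the interior of $\Delta_i$, with $\|\lambda_i'-\lambda\|_{C^2}<K\epsilon=\epsilon_0$, and whose Reeb flow carries the entry point of the pseudo-orbit in $\Delta_i$ to its exit point. Because the boxes are pairwise disjoint, these corrections can be carried out simultaneously, producing a single smooth contact form $\lambda'$ defining $\xi$, with $\|\lambda'-\lambda\|_{C^2}<\epsilon_0$, whose Reeb flow sends $x$ to $y$ in positive time. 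The nondegeneracy of $\lambda$ enters only to place us squarely in the setting of \cite{ABC,BC}; it is not essential for connecting two prescribed points.

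The main obstacle is the quantitative bookkeeping of \cite{ABC,BC}: one must check that the order of quantifiers ($\epsilon$ first, then $N$, then $\delta$) is respected, that the constants $\epsilon_*$ and $K$ and the support window of Lemma~\ref{lemma: lift Axiom} are compatible with the sizes and locations of the jumps produced by the pseudo-orbit, and that the flow boxes can genuinely be chosen disjoint while still covering every jump and making $R_\lambda$ straightenable on each of them. All of this is exactly the content of \cite{ABC,BC} once the Lift Axiom of \cite{PR} holds for Reeb flows, so in the write-up it would suffice to verify the Lift Axiom (Lemma~\ref{lemma: lift Axiom}), note the reduction to a conservative chain-transitive system, and refer to those papers for the remaining combinatorial details.
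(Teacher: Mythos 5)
Your proposal is correct and follows essentially the same route as the paper: both reduce to the conservative setting (no wandering points, hence $\delta$-pseudo-orbits between any two points), verify the Lift Axiom (Lemma~\ref{lemma: lift Axiom}) as the only Reeb-specific ingredient, and defer the conversion of pseudo-orbits into genuine orbits via perturbations in disjoint flow boxes to \cite{ABC,BC}. One small caution: the nondegeneracy of $\lambda$ is not merely decorative --- it guarantees finitely many periodic orbits of each bounded period, which is what the topological-tower construction of \cite{BC} needs, so it should not be dismissed as inessential to the argument as written.
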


From this result, following Arnaud-Bonatti-Crovisier \cite[Section 5.1]{ABC}, one obtains:

\begin{theorem}\label{thm: transitive} On a closed contact manifold, the set of transitive Reeb vector fields is a $G_\delta$-dense in the set of Reeb vector fields for the $C^1$-topology.
\end{theorem}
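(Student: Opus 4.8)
The plan is to follow the standard Baire-category argument of Arnaud--Bonatti--Crovisier \cite[Section~5.1]{ABC}, replacing their conservative connecting lemma by the Reeb connecting lemma stated as Theorem~\ref{thm: connecting}. First I would set up the ambient space: fix $(M,\xi)$ and work in the space $\mathcal{R}(\xi)$ of Reeb vector fields for contact forms defining $\xi$, equipped with the $C^1$-topology; this is a Baire space (it is an open subset of a Fréchet space, or carries a complete metric after a routine reduction). Recall that a vector field is transitive iff it has a dense orbit, and that on a compact manifold this is equivalent to the following countable condition: for every pair $U,V$ in a fixed countable basis $\{U_i\}$ of open sets of $M$, there is $t>0$ with $\phi^t(U)\cap V\neq\emptyset$. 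So set
$$
\mathcal{T}_{i,j} = \left\{\, R \in \mathcal{R}(\xi) \ \middle|\ \exists\, t>0 \text{ with } \phi^t_R(U_i)\cap U_j \neq \emptyset \,\right\},
$$
and note that $\bigcap_{i,j}\mathcal{T}_{i,j}$ is exactly the set of transitive Reeb fields. Each $\mathcal{T}_{i,j}$ is open in the $C^1$-topology, since the condition $\phi^t_R(U_i)\cap U_j\neq\emptyset$ for a fixed finite time $t$ is an open condition on $R$ and small $C^1$-perturbations only move finite-time orbits a little. Hence transitivity is a $G_\delta$ condition, and it remains to prove density of each $\mathcal{T}_{i,j}$.

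For density, fix $i,j$ and a Reeb field $R$ with $C^1$-neighborhood $\mathcal{N}$. By genericity of nondegeneracy I may first perturb $R$ inside $\mathcal{N}$ so that the resulting contact form is nondegenerate, which lets me apply Theorem~\ref{thm: connecting}. Then pick any $x\in U_i$ and $y\in U_j$; Theorem~\ref{thm: connecting} produces a $C^2$-small (hence $C^1$-small) perturbation $\lambda'$ of the defining contact form such that $y$ lies on the forward orbit of $x$ under $R_{\lambda'}$, i.e. $\phi^t_{R_{\lambda'}}(x)=y$ for some $t>0$. Choosing the perturbation small enough that $R_{\lambda'}\in\mathcal{N}$ and, by continuity of the finite-time flow, that a whole neighborhood of $x$ inside $U_i$ is carried into $U_j$ at time~$t$, we get $R_{\lambda'}\in\mathcal{T}_{i,j}\cap\mathcal{N}$. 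This shows $\mathcal{T}_{i,j}$ is dense. Applying the Baire category theorem to the countable family $\{\mathcal{T}_{i,j}\}$ of open dense sets yields that the transitive Reeb fields form a dense $G_\delta$.

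The main obstacle I anticipate is purely bookkeeping rather than conceptual: one must make sure the perturbation furnished by Theorem~\ref{thm: connecting}, which \emph{a priori} only connects the two chosen points, actually lands in $\mathcal{T}_{i,j}$ for the \emph{open} sets $U_i,U_j$. This is handled by the openness observation above—once one orbit runs from $x\in U_i$ to $y\in U_j$ in finite time $t$, continuity of $\phi^t$ in both the point and the vector field gives the same for a neighborhood—so no extra work beyond invoking continuous dependence of flows is needed. A secondary, again routine, point is verifying that $\mathcal{R}(\xi)$ with the $C^1$-topology is a Baire space; this follows because it is a nonempty open subset of the Fréchet space of contact forms defining $\xi$ (or of their Reeb vector fields), and Fréchet spaces are Baire. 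One should also note that the perturbation in Theorem~\ref{thm: connecting} is $C^2$-small, which a fortiori makes it $C^1$-small, so there is no loss in working with the coarser topology.
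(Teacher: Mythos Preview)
Your proposal is correct and follows essentially the same route as the paper: both reduce transitivity to the countable intersection of the open sets $\mathcal{T}_{i,j}$ (the paper's $\mathcal{U}_{m,n}$), establish openness directly and density via a preliminary perturbation to nondegeneracy followed by the Reeb connecting lemma (Theorem~\ref{thm: connecting}), and conclude by Baire category. The only cosmetic difference is that the paper spells out the passage from topological transitivity to the existence of a dense orbit as a separate Baire argument on $M$, whereas you invoke this equivalence at the outset.
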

\begin{proof} The arguments of \cite[Section 5.1]{ABC} apply \textit{verbatim}. We reproduce them here for the reader's convenience.
First the set $\mathcal{G}_0$ of nondegenerate Reeb vector fields is a $G_\delta$.
We then consider a countable basis $(U_n)_{n\in \N}$ of neighborhoods of $M$ and for~$m,n\in \N$ we let $\mathcal{U}_{m,n}$ be the set of Reeb vector fields $R$ for which the flow $(\phi_t)_{t\in \R}$ satisfies: there exists $t>0$ with $\phi_t (U_n)\cap U_m \neq \emptyset$.
The set $\mathcal{U}_{m,n}$ is open in the $C^1$-topology. Moreover $\mathcal{G}_0 \cap \mathcal{U}_{m,n}$ is $C^1$-dense by the connecting lemma (Theorem~\ref{thm: connecting}) and so is $\mathcal{U}_{m,n}$.
Thus
$$\mathcal{G}=\bigcap_{m,n} \mathcal{U}_{m,n}$$ is a $G_\delta$.

Every $R$ in $\mathcal{G}$ is by definition topologically transitive and thus transitive: it has a dense orbit.
Indeed, if $R\in \mathcal{G}$, for every $m\in \N$ the set $V_m$ of points of $M$ whose positive orbit meets $U_m$ is open and dense.
Thus $\bigcap_{m} V_m$ is a $G_\delta$. 
Every point $q \in \bigcap_{m} V_m$ is on a dense orbit.
\end{proof}


\begin{thebibliography}{99}

\bibitem{ACH} M. R. R. Alves, V.\ Colin and K. Honda, {\it Topological entropy for Reeb vector fields in dimension three via open book decompositions.} Jour. \'Ecole Polytechnique, Tome 6 (2019), 119--148.
 
\bibitem{AM} M. R. R. Alves and M. Meiwes, \textit{Dynamically exotic contact spheres in dimensions $\geq 7$.} Comment. Math. Helv. 94 (2019), no. 3, 569--622. 
 
\bibitem{AP} M. R. R. Alves and A. Pirnapasov, \textit{Reeb orbits that force topological entropy.} Ergodic Theory \& Dynamical Systems (2021), 1-44. doi:10.1017/etds.2021.80

\bibitem{ABC} M.-C. Arnaud, C. Bonatti and S. Crovisier, {\it Dynamiques symplectiques g\'en\'eriques}, Ergodic Theory \& Dynamical Systems {25} (2005), 1401--1436.

\bibitem{BEvHN} K. L. Baker, J. B. Etnyre and J. van Horn-Morris, \textit{Cabling, contact structures and mapping class monoids.} J. Differential Geom. 90 (2012), no. 1, 1--80.

\bibitem{BSHS} D. Bechara-Senior, U. Hryniewicz, P. A. S. Salom\~ao, \textit{On the relation between action and linking.} J. Mod. Dyn. 17 (2021) 319--336.

\bibitem{birkhoff} G. D. Birkhoff, {\it Dynamical Systems}, AMS Collog. Publ. IX, Providence (1966).

\bibitem{BC} C. Bonatti and S. Crovisier, {\it R\'ecurrence et g\'en\'ericit\'e}, Invent. Math. {158} (2004), 33--104.

\bibitem{Bow} R. Bowen, {\it Entropy for Group Endomorphisms and Homogeneous Spaces}, Trans. Amer. Math. Soc. 153 (1971), 401--414.

\bibitem{CDR} V. Colin, P. Dehornoy and A. Rechtman, \textit{On the existence of supporting broken book decompositions for contact forms in dimension $3$.} Invent. Math. 231 (2023), no. 3, 1489 -- 1539. 

\bibitem{CGHP} D. Cristofaro-Gardiner, M. Hutchings and D. Pomerleano, \emph{Torsion contact forms in three dimensions have two or infinitely many Reeb orbits.}  Geom. Topol. \textbf{23} (2019), no. 7, 3601--3645.

\bibitem{CGHHL} D. Cristofaro-Gardiner, U. Hryniewicz, M. Hutchings and H. Liu, \emph{Contact three-manifolds with exactly two simple Reeb orbits.} To appear in Geom. Topol. \texttt{arXiv:2102.04970v3}

\bibitem{CM} G. Contreras and M. Mazzucchelli, \textit{Existence of Birkhoff sections for Kupka-Smale Reeb flows of closed contact 3-manifolds.} Geom. Funct. Anal. 32 (2022), no. 5, 951--979. 

\bibitem{D} P. Dehornoy, {\it Genus one Birkhoff sections for geodesic flows.} Ergod. Th. Dynam. Sys. 35 (2014) 1795--1813.

\bibitem{DS} P. Dehornoy and M. Shannon, {\it Almost equivalence of algebraic Anosov flows.} \texttt{arXiv:1910.08457}

\bibitem{derham} G. de Rham, {\it Vari\'et\'es diff\'erentiables. Formes, courants, formes harmoniques.} Paris: Hermann 1955.

\bibitem{Dinaburg} E. I. Dinaburg, \textit{On the relations among various entropy characteristics of dynamical systems (English).}
Math. USSR, Izv. 5(1971), 337--378 (1972). 

\bibitem{Donnay} V. Donnay, \textit{Transverse homoclinic connections for geodesic flows.}
Dumas, H. S. (ed.) et al., Hamiltonian dynamical systems: history, theory, and applications. Proceedings of the international conference held at the University of Cincinnati, OH (USA), March 1992. New York, NY: Springer-Verlag. IMA Vol. Math. Appl. 63, 115 --125 (1995). 


\bibitem{FH} A. Florio and U. Hryniewicz, \textit{Quantitative conditions for right-handedness.} \texttt{arXiv:2106.12512v2}

\bibitem{FS} U. Frauenfelder and F. Schlenk, \textit{Fiberwise volume growth via Lagrangian intersections.} J. Symplectic Geom., 4 (2006), 117--148.

\bibitem{fried} D. Fried, {\it The geometry of cross sections to flows.} Topology, 21 (1982), 353--371.

\bibitem{friedanosov} D. Fried, {\it Transitive Anosov flows and pseudo-Anosov maps}, {Topology}, {22} (1983), 299--303.


\bibitem{ghys} E. Ghys, {\it Right-handed vector fields \& the Lorenz attractor}, Japan. J. Math. 4, 47--61 (2009).

\bibitem{giroux} E. Giroux, \textit{G\'eom\'etrie de contact: de la dimension trois vers les dimensions sup\'erieures.} Proceedings of the International Congress of Mathematicians, Vol. II (Beijing, 2002), 405--414, Higher Ed. Press, Beijing, 2002.

\bibitem{HK_handbook} B. Hasselblatt, A. Katok, \textit{Handbook of Dynamical Systems}, vols. 1, 1A, 1B, Elsevier Science B.V., 2002.

\bibitem{HK_book} B. Hasselblatt, A. Katok, \textit{Introduction to the modern theory of dynamical systems}, Encyclopedia Math. Appl., 54, Cambridge University Press, Cambridge, 1995, xviii+802 pp.

\bibitem{Hedlund} G. A. Hedlund, \textit{Geodesics on a two-dimensional Riemannian manifold with periodic coefficients.} Ann. Math. (2) 33 (1931), 719--739.


\bibitem{convex}H. Hofer, K. Wysocki and E. Zehnder. \textit{The dynamics on three-dimensional strictly convex energy surfaces.} Ann. of Math. {\bf 148} (1998), 197--289.

\bibitem{fols} H. Hofer, K. Wysocki and E. Zehnder, \textit{Finite energy foliations of tight three-spheres and Hamiltonian dynamics.} Ann. Math {\bf 157} (2003), 125--255.

\bibitem{H} U. Hryniewicz. \textit{Systems of global surfaces of section for dynamically convex Reeb flows on the $3$-sphere.} J. Symplectic Geom. 12 (2014), 791--862.

\bibitem{Hry_SFS} U. Hryniewicz, {\it A note on Schwartzman-Fried-Sullivan Theory, with an application.}  J. Fixed Point Theory Appl. 22 (2020), no. 1, Paper No. 25, 20 pp. 

\bibitem{HS} U. Hryniewicz and P. A. S. Salom\~ao. \textit{On the existence of disk-like global sections for Reeb flows on the tight $3$-sphere.} Duke Math. J. 160, 3 (2011), 415--465.

\bibitem{icm} U. Hryniewicz and P. A. S. Salom\~ao. {\it Global surfaces of section for Reeb flow in dimension three and beyond.} Proc. Int. Cong. of Math. -- 2018, Rio de Janeiro, Vol. 1 (937--964).

\bibitem{HSW} U. Hryniewicz, P. A. S. Salom\~ao and K. Wysocki, \textit{Genus zero global surfaces of section for Reeb flows and a result of Birkhoff.} To appear in J. Eur. Math. Soc., \texttt{arXiv:1912.01078v3}

\bibitem{hutchings} M. Hutchings, \textit{Lectures notes on embedded contact homology.} Contact and symplectic topology, 389--484, Bolyai Soc. Math. Stud. 26, Springer, 2014.


\bibitem{irie} K. Irie, \textit{Equidistributed periodic orbits of $C^{\infty}$-generic three-dimensional Reeb flows.} J. Symplectic Geom. 19 (2021), no. 3, 531--566. 

\bibitem{katok} A. Katok, \textit{Lyapunov exponents, entropy and periodic orbits for diffeomorphisms.} Publications math\'ematiques de l’I.H.\'E.S., tome 51 (1980), p. 137--173.

\bibitem{Koro2010} A. Koropecki, \textit{Aperiodic invariant continua for surface homeomorphisms.} Math. Z. 266 (2010), no. 1, 229--236.

\bibitem{KW} G. Knieper and H. Weiss, {\it $C^\infty$ genericity of positive topological entropy for geodesic flows on $S^2$.} J. Diff. Geom. {62} (2002), 127--141.

\bibitem{KLN} A.~Koropecki, P.~ Le Calvez and M.~Nassiri, {\it Prime ends rotation numbers and periodic points.} Duke Math. J. 164 (2015), no. 3, 403 --472. 

\bibitem{LS} P. Le Calvez, M. Sambarino, {\it Homoclinic orbits for area preserving difffeomorphisms of surfaces.} Ergodic Theory Dyn. Syst. 42, No. 3, 1122--1165 (2022). 

\bibitem{LS19} Y. Lima and O. Sarig, {\it Symbolic dynamics for three-dimensional flows with positive topological entropy}, J. Eur. Math. Soc. (JEMS) 21 (2019), no. 1, 199 -- 256. 

\bibitem{Mather} J. N. Mather, \textit{Invariant subsets for area preserving homeomorphisms of surfaces.} Mathematical analysis and applications, Part B, Adv. in Math. Suppl. Stud., vol. 7, Academic Press, New York, 1981, pp. 531--562.

\bibitem{polterovich} L. Polterovich, \textit{Growth of maps, distortion in groups and symplectic geometry.} Invent. Math. 150 (2002), no. 3, 655--686. 

\bibitem{PR} C. Pugh and C. Robinson, {\it The $C^1$-closing Lemma, including Hamiltonians}, Erg. Th. Dynam. Syst. {3} (1983), 261--313.

\bibitem{schwartzman} S. Schwartzman, {\it Asymptotic cycle}, Ann. of Math. (2) 66 (1957), 270--284.

\bibitem{sullivan} D. Sullivan, {\it Cycles for the dynamical study of foliated manifolds and complex manifolds.} Invent. Math. 36 (1976), 225--255.

\bibitem{sigmund} K. Sigmund, \textit{On the space of invariant measures for hyperbolic flows.} Amer. J. Math. 94, No. 1 (1972), 31--37.

\bibitem{taubes} C. H. Taubes, \textit{The Seiberg-Witten equations and the Weinstein conjecture.} Geom. Topol. 11 (2007), 2117--2202.

\bibitem{vK} O. van Koert, \textit{A Reeb flow on the three-sphere without a disk-like global surface of section.} Qual. Theory Dyn. Syst. 19, 36 (2020). 

\bibitem{Zeh} E. Zehnder, {\it Homoclinic points near elliptic fixed points},  Comm. Pure Appl. Math. 26 (1973), 131 -- 182. 


\end{thebibliography}
\end{document}